\theoremstyle{remark}\newtheorem*{remark}{Remark}}
\newcommand{\wkto}{\mathrel{\smash{{\overset{\raisebox{-0.2ex}{$\scriptstyle\ast$}}{\rightharpoonup}}}}}
\newcommand{\mvert}{\mathrel{
		\begingroup
		\begin{picture}(7,7)
			\roundcap\roundjoin
			\linethickness{0.6pt}
			\polyline(0,7)(0,0)(7,0)
		\end{picture}
		\endgroup
}}
\newcommand{\Brm}{\mathrm{B}}
\newcommand{\Crm}{\mathrm{C}}
\newcommand{\Irm}{\mathrm{I}}
\newcommand{\Lrm}{\mathrm{L}}
\newcommand{\Nrm}{\mathrm{N}}
\newcommand{\Prm}{\mathrm{P}}
\newcommand{\Srm}{\mathrm{S}}
\newcommand{\Wrm}{\mathrm{W}}
\newcommand{\Zrm}{\mathrm{Z}}
\newcommand{\Dcal}{\mathcal{D}}
\newcommand{\Mcal}{\mathcal{M}}
\newcommand{\Fbf}{\mathbf{F}}
\newcommand{\Mbf}{\mathbf{M}}
\newcommand{\Nbf}{\mathbf{N}}
\DeclareMathOperator*{\esssup}{ess\,sup}
\DeclareMathOperator{\interior}{int}
\DeclareMathOperator{\dist}{dist}
\DeclareMathOperator{\supp}{supp}
\DeclareMathOperator{\Wedge}{{\textstyle\bigwedge}}
\newcommand{\norm}[1]{\|#1\|}
\newcommand{\abs}[1]{|#1|}
\newcommand{\tv}[1]{\norm{#1}}
\newcommand{\altnorm}[1]{{\left|\kern-0.25ex\left|\kern-0.25ex\left| #1 \right|\kern-0.25ex\right|\kern-0.25ex\right|}}
\newcommand{\dprb}[1]{\bigl\langle #1 \bigr\rangle}
\newcommand{\cl}[1]{\overline{#1}}
\newcommand{\di}{\mathrm{d}}
\newcommand{\dd}{\;\mathrm{d}}
\newcommand{\N}{\mathbb{N}}
\newcommand{\R}{\mathbb{R}}
\newcommand{\Z}{\mathbb{Z}}
\newcommand{\Var}{\mathrm{Var}}
\newcommand{\Nlip}{\Nrm^{\mathrm{Lip}}}
\newcommand{\Lip}{\mathrm{Lip}}
\newcommand{\Trajlip}{\mathrm{Traj}^\Lip}
\newcommand{\STF}[1]{\vert\vert\vert#1\vert\vert\vert}
\newcommand{\sign}{\mathrm{sign}\hspace{0.2em}}
\newcommand{\tbf}{\mathbf{t}}
\newcommand{\pbf}{\mathbf{p}}
\newtheorem{definition}{Definition}[section]
\newtheorem{proposition}[definition]{Proposition}
\newtheorem{lemma}[definition]{Lemma}
\newtheorem{theorem}[definition]{Theorem}
\newtheorem{corollary}[definition]{Corollary}
\newtheorem*{claim}{Claim}
\newtheorem*{theorem*}{Theorem}
\numberwithin{equation}{section}
\begin{document}
	
	\title[The Space-Time Connectivity Theorem for Normal Currents]{The Space-Time Connectivity Theorem for Normal Currents}
	
	\begin{abstract}
    	This work establishes a Space-Time Connectivity Theorem for normal currents. In analogy with classical results of Federer and Fleming, this result allows one to witness the weak* convergence of a uniformly bounded sequence of boundaryless normal currents with a space-time normal current that connects the elements of the sequence to their limit. The space-time setting is distinguished from the classical case in that this connecting current has a time coordinate and thus constitutes a progressive-in-time deformation of an element of the sequence to the limit.
        \\ \vspace{0.2em} \\ 
        \noindent\textsc{MSC:} 49Q15, 53C65 \\
        \noindent\textsc{Date:} \today{}
	\end{abstract}
	
	\author{Paolo Bonicatto}
	\address{Universit\`a di Trento, Dipartimento di Matematica, Via Sommarive 14, 38123 Trento, Italy}
	\email{paolo.bonicatto@unitn.it}
	
	\author{Filip Rindler}
	\address{Mathematics Institute, University of Warwick, Coventry, CV4 7AL, United Kingdom}
	\email{F.Rindler@warwick.ac.uk}
	
	\author{Harry Turnbull}
	\address{Mathematics Institute, University of Warwick, Coventry, CV4 7AL, United Kingdom}
	\email{harry.turnbull@warwick.ac.uk}
	
	\maketitle

    \section{Introduction}\label{s:Intro}

    The following result forms part of the classical \emph{Connectivity Theorem} of Federer and Fleming (see Section \ref{s:Prelims} for the relevant notation):

    \begin{theorem*} Let $K$ be a compact Lipschitz neighbourhood retract and let $M > 0$. Let $(T_j)_j \subset \Irm_{k}(K)$ be a sequence of boundaryless integral currents with $\Mbf(T_{j}) \leq M$ for $j \in \N$ and suppose that $T_{j} \wkto T$ for some $T \in \Irm_{k}(K)$. Then there is a sequence $(R_j)_j \subset \Irm_{k+1}(K)$ with
        \begin{align*}
            T - T_j = \partial R_j \hspace{0.4em} \text{eventually} \hspace{1em} \text{and} \hspace{1em} \Mbf(R_j) \to 0 \hspace{0.4em} \text{as $j \to \infty$}.
        \end{align*}
    \end{theorem*}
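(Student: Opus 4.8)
\noindent\emph{Proof strategy.} The plan is to upgrade the weak* convergence to convergence in the flat norm, and then to repair the resulting flat decomposition into an \emph{exact} one by filling a small cycle with a small‑mass current; the hypothesis that $K$ is a Lipschitz neighbourhood retract is used throughout to keep every current produced supported inside $K$.

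First set $S_j := T - T_j \in \Irm_k(K)$. Each $S_j$ is boundaryless, $\Mbf(S_j) \le \Mbf(T) + \Mbf(T_j) \le 2M$ (using $\Mbf(T) \le \liminf_j \Mbf(T_j)$), and $S_j \wkto 0$. Since $\sup_j\bigl(\Mbf(S_j) + \Mbf(\partial S_j)\bigr) < \infty$, the Federer--Fleming compactness theorem for integral currents in $K$ forces $\Fbf(S_j) \to 0$ in the flat norm relative to $K$: every subsequence of $(S_j)_j$ has a flat‑convergent sub‑subsequence, and, since flat convergence entails weak* convergence, every such limit equals $0$. Hence there are $A_j \in \Irm_k(K)$ and $B_j \in \Irm_{k+1}(K)$ with
\[
  S_j = A_j + \partial B_j,
  \qquad
  \Mbf(A_j) + \Mbf(B_j) \longrightarrow 0 .
\]

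Next, observe that $A_j = S_j - \partial B_j$ is again a boundaryless integral $k$-current, with $\Mbf(A_j) \to 0$. Fix an open neighbourhood $U \supset K$ and a Lipschitz retraction $r \colon U \to K$ with $r|_K = \id$. Apply the deformation theorem to $A_j$ on a cubical grid of mesh size $\rho_j > 0$: because $\partial A_j = 0$ and every $k$-face of the grid carries mass $\rho_j^{\,k}$, one may take $\rho_j$ of order $\Mbf(A_j)^{1/k}$ small enough that the polyhedral part of the deformation, having mass at most $c\,\Mbf(A_j) < \rho_j^{\,k}$, must vanish. This produces $A_j = \partial \widetilde C_j$ for an integral $(k+1)$-current $\widetilde C_j$ with
\[
  \Mbf(\widetilde C_j) \le c\,\Mbf(A_j)^{(k+1)/k}
  \qquad\text{and}\qquad
  \supp \widetilde C_j \subset \{\, x : \dist(x,K) < \rho_j \,\},
\]
that is, the isoperimetric inequality together with its support estimate. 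For $j$ large, $\rho_j$ is small, so $\supp \widetilde C_j \subset U$; put $C_j := r_{\#}\widetilde C_j \in \Irm_{k+1}(K)$, which satisfies $\partial C_j = r_{\#}(\partial\widetilde C_j) = r_{\#}A_j = A_j$ (since $\supp A_j \subset K$ and $r|_K = \id$) and $\Mbf(C_j) \le (\Lip r)^{k+1}\,\Mbf(\widetilde C_j) \to 0$.

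Finally, for all sufficiently large $j$ set $R_j := B_j + C_j \in \Irm_{k+1}(K)$. Then
\[
  \partial R_j = \partial B_j + \partial C_j = (S_j - A_j) + A_j = S_j = T - T_j,
  \qquad
  \Mbf(R_j) \le \Mbf(B_j) + \Mbf(C_j) \longrightarrow 0 ,
\]
which is the assertion. The crux is the third step: one must fill the vanishing cycle $A_j$ by a current of comparably small mass whose support is close to $\supp A_j$, so that it survives being pushed into $K$ by $r$; this quantitative control is precisely what the deformation theorem supplies and what a soft compactness argument would not. A secondary technical point, suppressed above, is that the flat decomposition may be arranged with $A_j, B_j$ supported in $U$ (hence, after a further push‑forward by $r$, in $K$): one slices $B_j$ by the function $\dist(\,\cdot\,,K)$ and discards the part away from $K$, at the cost of an arbitrarily small increase in mass.
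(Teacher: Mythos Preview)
The paper does not give its own proof of this statement: it is quoted in the introduction as a classical result of Federer and Fleming (cited as \cite[Theorem~7.1]{FedererFleming60}) and serves only to motivate the paper's new space-time results for normal currents. So there is no ``paper's own proof'' to compare against; your argument is to be measured against the classical one, and it is essentially that argument.

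The outline is correct. Two points deserve tightening. First, when you produce $A_j \in \Irm_k(K)$ and $B_j \in \Irm_{k+1}(K)$ from the flat decomposition, you are implicitly invoking the \emph{integral} flat norm $\mathcal F^{\Irm}_K$, not the flat norm $\Fbf_K$ as defined in the paper (which takes the infimum over normal currents). The passage from weak* convergence of mass-bounded integral cycles to $\mathcal F^{\Irm}_K$-convergence is precisely the Federer--Fleming compactness/closure package, so this is fine once stated explicitly; your subsequence argument is the right mechanism. Second, if you work with $\mathcal F^{\Irm}_K$ from the outset, the decomposition already yields $A_j,B_j$ supported in $K$, so the ``secondary technical point'' in your final paragraph (slicing $B_j$ by $\dist(\cdot,K)$ and retracting) becomes unnecessary. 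The isoperimetric step---choosing the deformation mesh $\rho_j$ so small that the polyhedral part is forced to vanish, then retracting the filling into $K$---is exactly how Federer--Fleming obtain the filling $C_j$, and your support estimate is what makes the retraction legitimate for large $j$.
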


    In particular, this result corresponds to the integral version of \cite[Theorem 7.1]{Federer69book} in the case $(A,B) = (K,\varnothing)$, and we shall simply refer to it as the Integral Connectivity Theorem.
    
    The Integral Connectivity Theorem turns out to be related to a recent modelling approach in the theory of dislocation motion in crystalline materials, as introduced in~\cite{HudsonRindler22,Rindler23,Rindler25}. There, so-called \emph{slip trajectories}, that is, space-time integral $2$-currents in $\R^{1+3} \cong \R \times \R^{3}$, are employed to describe the motion of a dislocation: Given $S \in \Irm_2([0,1] \times \cl{\Omega})$ (where $\Omega \subset \R^3$ is a bounded Lipschitz domain representing a crystal specimen) with
    \begin{align*}
        \partial S = \delta_1 \times T_1 - \delta_0 \times T_0,  \hspace{1em} \text{where} \hspace{1em} T_0,T_1 \in \Irm_1(\cl{\Omega}),
    \end{align*}
    one interprets $S$ as ``transporting'' the dislocations represented by $T_0$ into $T_1$. Intermediate states can be recovered via slicing with respect to the temporal projection $\tbf(t,x):=t$ and then projecting onto the spatial component, namely
    \[
      T_t := \pbf_*S|_t \in \Irm_1(\cl{\Omega})  \qquad\text{for $t \in (0,1)$,}
    \]
    whenever this slice is defined. Here, $\pbf(t,x) := x$ denotes the spatial projection and $\pbf v := \pbf(v)$ for any vector $v \in \R^{1+3}$. Furthermore, the \textit{slip surface} corresponding to $S$, given by $R := \pbf_{*}S$, satisfies $\partial R = T_{1}-T_{0}$ and describes the region that the dislocation traverses over the course of its motion, taking into account multiplicities and possible cancellations.
    
    To see the aforementioned connection, take a sequence $(T_{j})_j \subset \Irm_{1}(\overline{\Omega})$ of boundaryless integral $1$-currents with uniformly bounded mass and suppose, as in the Integral Connectivity Theorem, that $T_{j} \wkto T$ for some $T \in \Irm_{1}(\overline{\Omega})$. The Integral Connectivity Theorem then furnishes us with slip surfaces $R_{j} \in \Irm_{2}(\overline{\Omega})$ which satisfy $\partial R_{j} = T-T_{j}$ eventually (that is, there is $J \in \N$ such that $\partial R_{j} = T - T_{j}$ for $j \geq J$) and $\Mbf(R_{j}) \to 0$. However, it is not immediately clear how to construct slip trajectories $S_{j} \in \Irm_{2}([0,1] \times \overline\Omega)$ satisfying $\pbf_{*}S_{j} = R_{j}$. In fact, the $S_{j}$ contain more information than the $R_{j}$: As space-time currents they have a time coordinate, and this is crucial for the description of dislocation dynamics (essentially, since the plastic and dissipative effects resulting from dislocation motion may depend on the path taken by the dislocations). 
    
    Nevertheless, the existence of the appropriate slip trajectories $S_{j}$ is provided by the \emph{Equivalence Theorem}~\cite[Theorem 5.1]{Rindler23}, so named because it shows the equivalence of two different notions of distance between integral currents; in this work, we shall refer to this result as the \emph{Space-Time Connectivity Theorem} for integral currents. By means of this result, one can replace the $R_j$ in the Integral Connectivity Theorem above by \emph{space-time trajectories}
    \[
        S_j \in \Irm_{1+k}([0,1] \times K) \hspace{1em} \text{with} \hspace{1em}
        \partial S_j = \delta_1 \times T - \delta_0 \times T_j,
    \]
    and the vanishing mass conclusion with
    \[
        \Var(S_j) := \int \abs{\pbf \vec{S}_j} \hspace{0.2em} \mathrm{d}\tv{S_{j}} \to 0  \hspace{1em} \text{as $j \to \infty$}.
    \]
    This \emph{variation} of $S_j$ measures the area of the traversed surface (taking into account absolute multiplicity) and in dislocation theory corresponds to a simplified version of the dissipation. Furthermore, by appealing to a more refined argument, one can also get an $\Lrm^\infty$-estimate for the slices, namely
    \[
      \limsup_{j \to \infty} \Vert S_j \Vert_{\Lrm^\infty} := \limsup_{j \to \infty}\esssup_{t \in [0,1]} \Mbf(S_j|_t) \leq C \cdot \limsup_{j \to \infty}\Mbf(T_{j}),
    \]
    where $C > 0$ depends only on $k$, $d$, and $K$.
    The Space-Time Connectivity Theorem is a key ingredient in the existence theorem~\cite[Theorem 5.1]{Rindler25} for the fully nonlinear system of discrete dislocation motion and its plastic effects. In particular, the uniform $\Lrm^\infty$-estimate is crucial as it ensures that intermediate dislocations have finite length, without which these trajectories would be physically inadmissible.

    However, in real crystalline materials, the amount of dislocations is usually very large; for example, the total length of all dislocation lines in well-annealed metals is on the order of $10^{7}$ to $10^{8}$ cm per cm$^{3}$ (see~\cite{HullBacon11book}). To model the motion of many dislocations effectively, a possible approach is to describe them as a \emph{field}. In fact, one can model a \emph{dislocation field} using boundaryless \emph{normal} 1-currents, but existence results for elastoplastic evolutions of these fields require one to establish a suitable Space-Time Connectivity Theorem for \emph{normal} currents, which is the main goal of the present paper. This goal is realised in Theorem~\ref{tm:Equivalence}.
    
    We shall now discuss a few reasons why this result is more subtle than the corresponding result for integral currents described above. First notice that the statement of the Integral Connectivity Theorem above would fail if we were to directly replace integral currents with normal currents without further adjustments. To see this, let $K \subset \R^{2}$ be an annulus and set $T_{j} := j^{-1}R$ for $j \in \N$, where $R \in \Irm_{1}(K)$ is a boundaryless integral $1$-current that loops around the hole of $K$. Then $T_{j} \wkto 0$ but there is no $R_{j} \in \Nrm_{2}(K)$ with $\partial R_{j} = T_{j}$. However, if one ensures that this behaviour does not occur by assuming that $T_{j}$ eventually belongs to the same homology class as $T$, then one can observe a positive result:

    \begin{theorem*}[Normal Connectivity Theorem] Let $K$ be a compact Lipschitz neighbourhood retract and let $M > 0$. Let $(T_{j})_{j} \subset \Nrm_{k}(K)$ be a sequence of boundaryless normal currents with $\Mbf(T_{j}) \leq M$ for $j \in \N$, $T_{j} \wkto T$ for some $T \in \Nrm_{k}(K)$, and, for $j$ sufficiently large,
    \begin{align}\label{NullHom}
        T - T_{j} \in \Brm_{k}(K), \hspace{1em} \text{i.e. $T - T_{j} = \partial Z_{j}$ for some $Z_{j} \in \Nrm_{k+1}(K)$}.
    \end{align}
    Then there is a sequence $(R_j)_j \subset \Nrm_{k+1}(K)$ with
        \begin{align*}
            T - T_j = \partial R_j \hspace{0.4em} \text{eventually} \hspace{1em} \text{and} \hspace{1em} \Mbf(R_j) \to 0 \hspace{0.4em} \text{as $j \to \infty$}.
        \end{align*}
    \end{theorem*}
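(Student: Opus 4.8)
The plan is to reduce the statement to showing that the minimal filling mass
\[
 d_j := \inf\setb{\Mbf(R)}{R \in \Nrm_{k+1}(K),\ \partial R = T - T_j} \in [0,\infty]
\]
tends to $0$. By \eqref{NullHom} we have $d_j < \infty$ for all large $j$, so once $d_j \to 0$ is known one simply picks $R_j \in \Nrm_{k+1}(K)$ with $\partial R_j = T - T_j$ and $\Mbf(R_j) \le d_j + 2^{-j}$ for $j$ large, which is the desired conclusion. I would establish $d_j \to 0$ by contradiction: if it fails, pass to a subsequence (not relabelled) along which $d_j \ge \delta$ for a fixed $\delta > 0$ and \eqref{NullHom} holds for every $j$.

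Fix $\varepsilon > 0$, to be chosen at the very end in terms of $\delta$, $M$, $k$, $d$, $K$ only. Since $U_j := T - T_j$ is a compactly supported normal $k$-current in $\R^d$ with $\partial U_j = 0$, the deformation theorem applied at scale $\varepsilon$ produces a polyhedral $k$-chain $P_j$ on the $\varepsilon$-grid and $Q_j \in \Nrm_{k+1}(\R^d)$ with
\[
 U_j = P_j + \partial Q_j, \quad \partial P_j = 0, \quad \Mbf(P_j) \le c\,\Mbf(U_j) \le 2cM, \quad \Mbf(Q_j) \le c\varepsilon\,\Mbf(U_j) \le 2c\varepsilon M,
\]
with $c = c(k,d)$ and all of $P_j, Q_j$ supported within distance $c(d)\varepsilon$ of $K$. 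Post-composing with a fixed Lipschitz retraction $\rho$ of a neighbourhood of $K$ onto $K$ (available since $K$ is a compact Lipschitz neighbourhood retract) and using $\rho_\# U_j = U_j$ (as $\rho$ is the identity on $\supp U_j \subset K$), we obtain $U_j = \tilde P_j + \partial \tilde Q_j$ with $\tilde P_j := \rho_\# P_j \in \Nrm_k(K)$, $\tilde Q_j := \rho_\# Q_j \in \Nrm_{k+1}(K)$, $\partial\tilde P_j = 0$, $\Mbf(\tilde P_j) \le (\Lip\rho)^k 2cM$ and $\Mbf(\tilde Q_j) \le (\Lip\rho)^{k+1}2c\varepsilon M$. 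The decisive structural point, for fixed $\varepsilon$, is that all $\tilde P_j$ lie in the \emph{finite-dimensional} subspace $V_\varepsilon \subset \Nrm_k(K)$ which is the image under $\rho_\#$ of the (finite-dimensional) space of polyhedral $k$-chains on the $\varepsilon$-grid supported in the $c(d)\varepsilon$-neighbourhood of $K$. Being mass-bounded in $V_\varepsilon$, a subsequence satisfies $\tilde P_j \to \tilde P$ in $V_\varepsilon$, so $\Mbf(\tilde P_j - \tilde P) \to 0$; and since $\tilde Q_j$ and $\partial\tilde Q_j = U_j - \tilde P_j$ are mass-bounded, the weak-$*$ compactness theorem for normal currents gives, along a further subsequence, $\tilde Q_j \wkto \tilde Q \in \Nrm_{k+1}(K)$ with $\Mbf(\tilde Q) \le (\Lip\rho)^{k+1}2c\varepsilon M$ and $\partial\tilde Q_j \wkto \partial\tilde Q$. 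Letting $j \to \infty$ in $U_j = \tilde P_j + \partial\tilde Q_j$ and using $U_j \wkto 0$ yields $\tilde P = -\partial\tilde Q$, hence for every $j$ in the subsequence
\[
 T - T_j = W_j + \partial(\tilde Q_j - \tilde Q), \qquad W_j := \tilde P_j - \tilde P \in V_\varepsilon, \quad \partial W_j = 0, \quad \Mbf(W_j) \to 0.
\]

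The main obstacle, and the step where the argument genuinely departs from the integral case (there the relevant polyhedral chains take only finitely many values, so no residual term survives), is to fill the small residual cycle $W_j$ by a current of small mass; this is exactly where \eqref{NullHom} enters. From $T - T_j = \partial Z_j$ and $W_j = (T - T_j) - \partial(\tilde Q_j - \tilde Q)$ we get $W_j = \partial(Z_j - \tilde Q_j + \tilde Q)$ with $Z_j - \tilde Q_j + \tilde Q \in \Nrm_{k+1}(K)$, so $W_j$ lies in the linear subspace $V_\varepsilon^0 \subset V_\varepsilon$ of boundaryless chains admitting a filling in $\Nrm_{k+1}(K)$. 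On $V_\varepsilon^0$ the minimal filling mass $f(W) := \inf\setb{\Mbf(R)}{R \in \Nrm_{k+1}(K),\ \partial R = W}$ is finite, positively homogeneous and subadditive, and $f(W) = 0$ forces $W = 0$ (if $\partial R_n = W$ with $\Mbf(R_n) \to 0$ then $R_n \wkto 0$, so $W = \partial R_n \wkto 0$); hence $f$ is a norm on the finite-dimensional space $V_\varepsilon^0$ and is dominated there by $C_\varepsilon\,\Mbf(\cdot)$. Thus $W_j$ has a filling $R_j' \in \Nrm_{k+1}(K)$ with $\Mbf(R_j') \le C_\varepsilon \Mbf(W_j) \to 0$, and $R_j := (\tilde Q_j - \tilde Q) + R_j' \in \Nrm_{k+1}(K)$ satisfies $\partial R_j = T - T_j$ with $\Mbf(R_j) \le C(\Lip\rho)^{k+1} M\varepsilon + o(1)$ for $C = C(k,d)$. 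Consequently $\limsup_j d_j \le C(\Lip\rho)^{k+1} M \varepsilon$; choosing $\varepsilon$ small enough (also small enough that the supports produced by the deformation theorem stay in the domain of $\rho$) so that the right-hand side is $< \delta$ contradicts $d_j \ge \delta$. Hence $d_j \to 0$, which proves the theorem.
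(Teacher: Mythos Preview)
Your proof is correct and rests on precisely the same three ingredients as the paper's argument: the deformation theorem at a fixed scale, pushforward by a Lipschitz retraction onto $K$, and the finite-dimensionality of the resulting space of (retracted) skeletal chains, on which the minimal-filling-mass functional is then a genuine norm and hence comparable to $\Mbf$. The paper packages these same ingredients into a single modular estimate, the ``linear isoperimetric inequality'' $\Fbf_K^\circ \le C\,\Fbf_K$ on $\Brm_k(K)$ (Proposition~\ref{pp:HomFlatByMass} and Corollary~\ref{cr:HomFlatVsFlat}), and then combines it with the standard equivalence of weak* and flat convergence under a uniform mass bound. Your route instead runs a contradiction argument and extracts a weak* limit $\tilde Q$ of the deformation pieces $\tilde Q_j$ so that the polyhedral remainder $W_j = \tilde P_j - \tilde P$ is small in \emph{mass}, avoiding any explicit appeal to flat convergence. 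This is slightly more hands-on and the constant $C_\varepsilon$ you obtain is $\varepsilon$-dependent (harmless for the contradiction), whereas the paper's formulation yields a single constant depending only on $k$, $d$, $K$; on the other hand your argument is self-contained and does not cite the weak*/flat equivalence as a black box.
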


    Notice that, under the assumptions of the Normal Connectivity Theorem, the Federer-Fleming Connectivity Theorem only demonstrates that $\Fbf_{K}(T-T_{j}) \to 0$, that is, there are $Y_{j} \in \Nrm_{k}(K)$ and $X_{j} \in \Nrm_{k+1}(K)$ with
    \begin{align*}
        T - T_{j} &= \partial X_{j} + Y_{j}, & \Mbf(X_{j}) + \Mbf(Y_{j}) \to 0.
    \end{align*}
    The additional assumption \eqref{NullHom} allows us to write $Y_{j}$ as the boundary of a normal $(k+1)$-current for large enough $j \in \N$ and so the result follows provided we can control the mass of this filling. To this end, one can use an estimate first obtained in \cite[3.2]{Federer74} which can be interpreted as a ``linear isoperimetric inequality'' for normal boundaries (see Section \ref{s:FlatEquiv}). In the integral case, the isoperimetric inequality~\cite[Theorem 6.1]{FedererFleming60} ensures that $T - T_{j} \in \Brm_{k}(K)$ whenever $T_{j}$ and $T$ are close enough in the (integral) flat norm, and so the assumption \eqref{NullHom} is superfluous there. 
    
    Our main result, in an abbreviated form of the full statement in Theorem \ref{tm:Equivalence}, is:
    
    \begin{theorem}[Space-Time Connectivity Theorem]\label{tm:AbbrvSTConnect} Let $K$ be a compact Lipschitz neighbourhood retract and let $M > 0$. Let $(T_{j})_{j} \subset \Nrm_{k}(K)$ be a sequence of boundaryless normal currents with $\Mbf(T_{j}) \leq M$ for $j \in \N$, $T_{j} \wkto T$ for some $T \in \Nrm_{k}(K)$, and $T-T_{j} \in \Brm_{k}(K)$ for $j$ sufficiently large. Then, for all $j \in \N$ along a (not relabelled) subsequence, there is $S_{j} \in \Nrm_{1+k}([0,1] \times K)$ such that
	\begin{align}\label{AbbrvSTConnect1}
		\partial S_{j} &= \delta_{1} \times T - \delta_{0} \times T_{j}, & \Var(S_{j}) &\to 0, 
	\end{align}
	and
	\begin{align}\label{AbbrvSTConnect2}
		\limsup_{j \to \infty} \Vert S_j \Vert_{\Lrm^\infty} \leq C \cdot \limsup_{j \to \infty} \Mbf(T_{j}),
	\end{align}
	where $C > 0$ depends only on $k$, $d$, and $K$.
    \end{theorem}

    To obtain the first conclusion \eqref{AbbrvSTConnect1}, we use the two results referred to in the previous paragraph, but we also need a way to ``stretch out'' a filling that witnesses $T - T_{j} \in \Brm_{k}(K)$ in such a way that the variation is well-behaved. To do this, we can use an argument similar to that of~\cite[Theorem 5.4]{Rindler23}. This result is called the \emph{Equality Theorem}, but the arguments of~\cite{Rindler23} in fact only apply for $K = \overline{\Omega}$ with $\Omega$ a $\Crm^1$-domain, and we give an example of a Lipschitz domain where the argument fails. Nevertheless, the statement of the theorem is true: In Theorem \ref{tm:Equality} we will also show the Equality Theorem for $K = \overline{\Omega}$ where $\Omega \subset \mathbb{R}^{d}$ is a $\Crm^0$ domain (see Section \ref{s:GoodDirections}). Central to the proof of this result is the notion of ``good directions'' introduced in~\cite{BallZarnescu17}. 
    
    A major disadvantage of the aforementioned Equality Theorem is that it does not control the masses of the slices of the resulting space-time trajectory. To obtain the $\Lrm^{\infty}$ estimate \eqref{AbbrvSTConnect2}, we must therefore adopt a more careful approach, which is given in Section \ref{s:STConnect}.
    
    As in the integral case, we shall frequently make use of a deformation result adapted to the space-time setting; in the present paper, we call this result the \emph{Dynamical Deformation Theorem} (see Theorem \ref{tm:STDeform}). This result is one of many that require variation estimates which are relatively straightforward for a space-time integral current $S \in \Irm_{k+1}([0,1] \times \R^{d})$ (where one can appeal to the area formula since $\tv{S} \ll \mathscr{H}^{k+1}$), but need some care in the normal case. In particular, we need to appeal to the recent results established in~\cite{AlbertiMarchese16} to obtain precise formulae (see Section \ref{s:VarEsts}). 
	
	\section{Preliminaries}\label{s:Prelims}
	
	This section collects some notation and known results, together with some simplified proofs in special cases.
	
	\subsection{Currents} Fix $d,m \in \N$ and let $k,l \in \N_{0}$. We refer to~\cite[Chapter 4]{Federer69book} for a comprehensive treatment of currents and here recall some facts that we shall use frequently in the sequel. The notation used follows that of~\cite{Rindler23} but, since we shall make use of both the Euclidean and mass/comass norms on $\Wedge_k \R^{d}$ and $\Wedge^k \R^{d}$, we shall use $\vert \cdot \vert$ and $\Vert \cdot \Vert$ to denote the former and latter pair respectively, following~\cite{Federer69book}. We stress that any terms involving the boundary or slices of a $0$-current in what follows should be ignored.
	
	A $k$-current $T \in \Dcal_{k}(\R^d)$ is said to be \textbf{normal} if $\Nbf(T) := \Mbf(T) + \Mbf(\partial T) < \infty$ and $\supp T$ is compact. We shall use $\Nrm_{k}(\R^d)$ to denote the space of normal $k$-currents on $\R^d$.
	
	Let $K \subset \R^d$ be compact. We shall use $\Nrm_{k}(K)$ to denote the space of normal $k$-currents with support in $K$. Furthermore, we define the spaces of (normal) \textbf{cycles} and \textbf{boundaries} as follows:
	\begin{align*}
		\Zrm_{k}(K) &:= \{T \in \Nrm_{k}(K): \partial T = 0\} &   \Brm_{k}(K) &:= \{\partial S: S \in \Nrm_{k+1}(K)\}.
	\end{align*}
	Notice that $\Brm_{k}(K) \subset \Zrm_{k}(K)$.
	
	Fix $T \in \Nrm_{k}(K)$. We define the \textbf{flat norm} of $T$ as follows:
	\begin{align}\label{FlatDefn}
		\Fbf_{K}(T) := \inf\{\Mbf(T - \partial S) + \Mbf(S): S \in \Nrm_{k+1}(K)\}.
	\end{align}
	Notice that $\Fbf_{K}(T) \leq \Mbf(T)$ and that $\Fbf_{K}$ is indeed a norm on $\Nrm_{k}(K)$. It turns out that the infimum in~\eqref{FlatDefn} is attained. Note that, in~\cite[4.1.12]{Federer69book}, the infimum is taken over $S \in \Dcal_{k+1}(\R^d)$ with $\supp S \subset K$ but, since $\Mbf(T) < \infty$, the minimiser is normal. We also define the \textbf{homogeneous flat norm} of $T$ as follows:
	\begin{align*}
		\Fbf^\circ_{K}(T) := \inf\{\Mbf(S): S \in \Nrm_{k+1}(K), \hspace{0.2em} \partial S = T\}.
	\end{align*}
	In particular, $\Fbf^\circ_{K}(T) = \infty$ if $T \not\in \Brm_{k}(K)$, but $\Fbf^\circ_{K}$ is a norm on $\Brm_{k}(K)$. Note that this infimum is also attained. 

    Let $f: \R^d \to \R^{m}$ be smooth and suppose that $f\vert_{\supp T}$ is proper (recall that a map is proper if preimages of compact sets are compact). The \textbf{pushforward} of $T$ by $f$ is given by $f_{*}T(\omega) := T(\theta \cdot f^{*}\omega)$ for $\omega \in \Dcal^{k}(\R^{m})$, where $f^{*}\omega$ is the pullback of $\omega$ by $f$ and $\theta: \R^d \to \R$ is a smooth function with $\theta = 1$ on a neighbourhood of $(\supp T) \cap f^{-1}(\supp \omega)$. One can check that this definition is independent of $\theta$ and that $f_{*}T \in \Nrm_{k}(\R^{m})$. If $f$ is only Lipschitz, then one can use the estimate in~\cite[4.1.13]{Federer69book} to make sense of $f_{*}T \in \Nrm_{k}(\R^{m})$ in such a way that the homotopy formula (see~\cite[4.1.9]{Federer69book}) still holds. Furthermore, we have
    \begin{align}\label{PushFwdAC}
		\Vert f_{*}T \Vert \ll f_{\#}\Vert T \Vert,
	\end{align}
	where $f_{\#}\Vert T \Vert$ denotes the measure-theoretic pushforward of $\Vert T \Vert$ by $f$ (see~\cite[Definition 1.70]{AmbrosioFuscoPallara00book}). We remark that we follow the convention of~\cite{AmbrosioFuscoPallara00book} here and so we use $f_{*}T$ (as opposed to $f_{\#}T$) to denote the previously defined pushforward of $T$ by $f$ to avoid the overloading of notation. In the following, $f$ may only be defined on $\supp T$. In this case, we define $f_{*}T := \tilde{f}_{*}T$ where $\tilde{f}: \R^d \to \R^{m}$ is a Lipschitz extension of $f$. This procedure is well-defined by~\cite[4.1.15]{Federer69book}.
	
	If $f$ is smooth, we can use the Radon--Nikodym decomposition $T = \vec{T} \, \Vert T \Vert$ and the definition of the pullback of a $k$-form to deduce that, for $\omega \in \Dcal^{k}(\mathbb{R}^{m})$,
	\begin{align}\label{SmoothPushFwdFormula}
		f_{*}T(\omega) = \int \langle Df(x)\vec{T}(x),\omega(f(x)) \rangle \dd \Vert T \Vert(x).
	\end{align}
	If $f$ is only Lipschitz, we cannot expect~\eqref{SmoothPushFwdFormula} to hold as $Df(x)$ may not exist for $\Vert T \Vert$-a.e.\ $x \in \R^d$. However, it was shown in~\cite[Theorem 1.1]{AlbertiMarchese16} that there is a vector bundle associated to the measure $\Vert T \Vert$, that is, an assignment of a subspace $V(\Vert T \Vert,x)$ of $\R^d$ for each $x \in \R^d$, such that for any Lipschitz function $f: \R^d \to \R^{m}$ there is a linear map $D^{T}f(x): V(\Vert T \Vert,x) \to \R^{m}$ for $\Vert T \Vert$-a.e.\ $x \in \R^d$ with
	\begin{align}\label{TangDiffDefn}
		D^{T}f(x)v = \lim_{h \to 0} \frac{f(x+hv)-f(x)}{h}.
	\end{align}
    for $v \in V(\Vert T \Vert,x)$. This bundle is called the \textbf{decomposability bundle} of $\Vert T \Vert$. Furthermore, it was shown in~\cite[Proposition 5.6]{AlbertiMarchese16} that
	\begin{align}\label{PolarInBundle}
		\mathrm{span}\hspace{0.2em}\vec{T}(x) \subset V(\Vert T \Vert,x) \hspace{2em} \text{for $\Vert T \Vert$-a.e.\ $x \in \R^d$}.
	\end{align}
	Again, let $f: \R^d \to \R^{m}$ be Lipschitz. It turns out that the only obstruction to a generalisation of~\eqref{SmoothPushFwdFormula} is the aforementioned lack of $\Vert T \Vert$-a.e.\ existence of the differential of $f$; it was shown in~\cite[Proposition 5.19]{AlbertiMarchese16} that, for $\omega \in \Dcal^{k}(\R^{m})$, we have
	\begin{align}\label{LipPushFwdFormula}
		f_{*}T(\omega) = \int \langle D^{T}f(x)\vec{T}(x),\omega(f(x)) \rangle \dd \Vert T \Vert(x).
	\end{align}
    In the following, we shall occasionally abuse notation and write $Df(x) := D^{T}f(x)$ when the choice of $T \in \Nrm_{k}(\mathbb{R}^{d})$ is clear. We stress that, in this case, it may happen that the full gradient $Df(x)$ does not exist at $x$, yet the derivative in the direction of the current $T$ does exist. We adopt this lighter notation in order to reduce clutter.
    
    Let $(T_{j})_{j} \subset \Dcal_{k}(\R^{d})$ and take $T \in \Dcal_{k}(\R^{d})$. We say that $T_{j} \to T$ \textbf{strictly} if $T_{j} \wkto T$ and $\Mbf(T_{j}) \to \Mbf(T)$.
    
	We shall use $\Irm_{k}(\R^d)$ to denote the space of integral $k$-currents in $\R^d$. Let $f: \R^d \to \R^{m}$ be Lipschitz and suppose that $f\vert_{\supp T}$ is proper. It turns out that $f_{*}T \in \Irm_{k}(\R^{m})$ whenever $T \in \Irm_{k}(\R^d)$. Given a compact set $K \subset \R^d$, we define $\Irm_{k}(K) := \Nrm_{k}(K) \cap \Irm_{k}(\R^d)$.

    Now fix $v_{0},v_{1},\dotsc,v_{k} \in \R^d$. If $v_{0},\dotsc,v_{k}$ are linearly independent, we define the $k$-dimensional \textbf{oriented simplex} $[\![v_{0},v_{1},\dotsc,v_{k}]\!]$ as
    \begin{align}\label{SimplicesAreRect}
		[\![v_{0},v_{1},\dotsc,v_{k}]\!] := \frac{\eta}{\vert \eta \vert} \mathscr{H}^{k} \mvert C,
	\end{align}
    where $C \subset \R^d$ is the convex hull of $\{v_{0},v_{1},\dotsc,v_{k}\}$ and $\eta := (v_{1}-v_{0}) \wedge \cdots \wedge (v_{k}-v_{k-1})$. If $v_{0},\dotsc,v_{k}$ are linearly dependent, then we set $[\![v_{0},v_{1},\dotsc,v_{k}]\!] := 0$. Notice that this agrees with the notation of \cite[4.1.11]{Federer69book} by \cite[4.1.27]{Federer69book}. The space of \textbf{polyhedral chains} $\Prm_{k}(\R^{d})$ is the vector space generated by all $k$-dimensional oriented simplices in $\R^{d}$. It follows from \eqref{SimplicesAreRect} that 
	\begin{align}\label{PolyVsIntegral}
		[\![v_0,v_1,\dotsc,v_{k}]\!] \in \Irm_{k}(\R^d),
	\end{align}
	and thus integral combinations of simplices are both integral currents and polyhedral chains. Given a compact set $K \subset \R^{d}$, we define $\Prm_{k}(K) := \Nrm_{k}(K) \cap \Prm_{k}(\R^{d})$. In \cite[4.1.11]{Federer69book}, it is shown that
	\begin{align}\label{PolyBdry}
		\partial [\![v_{0},v_{1},\dotsc,v_{k}]\!] = \sum_{i=0}^{k}(-1)^{i}[\![v_{0},\dotsc,\hat{v}_{i},\dotsc,v_{k}]\!],
	\end{align}
	where $\hat{v}_{i}$ indicates that $v_{i}$ has been omitted from the tuple. It follows that, if $P \in \Prm_{k}(\R^{d})$, then $\partial P \in \Prm_{k-1}(\mathbb{R}^{d})$. Similarly, if $A: \R^d \to \R^{m}$ is affine, then $A_{*}P \in \Prm_{k}(\R^{m})$; in \cite[4.1.11]{Federer69book}, it is shown that
	\begin{align}\label{PolyPushFwd}
		A_{*}[\![v_{0},v_{1},\dotsc,v_{k}]\!] = [\![A(v_{0}),A(v_{1}),\dotsc,A(v_{k})]\!].
	\end{align}
    It is also shown in \cite[4.1.11]{Federer69book} that
    \begin{align}\label{SimpPermute}
        [\![v_{\sigma(0)},\dotsc,v_{\sigma(k)}]\!] = \mathrm{index}(\sigma)[\![v_{0},\dotsc,v_{k}]\!]
    \end{align}
    for every permutation $\sigma$ of $\{0,\dotsc,k\}$. We shall also need the formula for the simplicial decomposition of an oriented simplicial prism (again, see \cite[4.1.11]{Federer69book}): Given $a,b \in \R$, we have
	\begin{align}\label{SimpDecomp}
		[\![a,b]\!] \times [\![v_{0},v_{1},\dotsc,v_{k}]\!] = \sum_{i=0}^{k}(-1)^{i}[\![(a,v_{0}),\dotsc,(a,v_{i}),(b,v_{i}),\dotsc,(b,v_{k})]\!].
	\end{align}
	If $P_{1} \in \Prm_{k}(\R^d)$ and $P_{2} \in \Prm_{l}(\R^{m})$ (here we assume that $l \leq m$), then it follows from~\eqref{SimpDecomp} that $P_{1} \times P_{2} \in \Prm_{k+l}(\R^{d+m})$.
    Now take $v \in \R^{d}$. It turns out (see \cite[4.1.11]{Federer69book}) that
    \begin{align}\label{FedererSimplicesJoin}
        [\![v,v_{0},\dotsc,v_{k}]\!] = G_{*}([\![0,1]\!] \times [\![v_{0},\dotsc,v_{k}]\!]),
    \end{align}
    where $G: [0,1] \times \R^{d} \to \R^{d}$ is given by $G(t,x) = (1-t)v+tx$ for $(t,x) \in [0,1] \times \R^{d}$. We claim that
    \begin{align}\label{SimplicesJoin}
        [\![v_{0},\dotsc,v_{k},v]\!] = (-1)^{k} H_{*}([\![0,1]\!] \times [\![v_{0},\dotsc,v_{k}]\!]),
    \end{align}
    where $H: [0,1] \times \R^{d} \to \R^{d}$ is given by $H(t,x) = (1-t)x+tv$ for $(t,x) \in [0,1] \times \R^{d}$. To see this, notice that $H = G \circ a$ where $a: [0,1] \times \R^{d} \to [0,1] \times \R^{d}$ is given by $a(t,x)=(1-t,x)$ for $(t,x) \in \R^{d}$. We can therefore use \eqref{SimpDecomp}, \eqref{PolyPushFwd} with $A = a$, \eqref{FedererSimplicesJoin}, and \eqref{SimpPermute} to deduce that
    \begin{align*}
        H_{*}([\![0,1]\!] \times [\![v_{0},\dotsc,v_{k}]\!]) 
        &= G_{*}([\![1,0]\!] \times [\![v_{0},\dotsc,v_{k}]\!]) \\
        &= - G_{*}([\![0,1]\!] \times [\![v_{0},\dotsc,v_{k}]\!]) \\
        &= - [\![v,v_{0},\dotsc,v_{k}]\!] \\
        &= (-1)^{k}[\![v_{0},\dots,v_{k},v]\!].
    \end{align*}
	
	Let $T \in \Nrm_{k}(\R^d)$ and let $f: \R^d \to \R$ be Lipschitz. We shall need to consider the \textbf{slice} $\langle T,f,t \rangle$ of $T$ by $f$ at $t \in \R$. In the vast majority of cases, we shall be slicing space-time normal currents by the projection map $\tbf: \R^{1+d} \to \R$, given by $\tbf(t,x) := t$ for $(t,x) \in \R^{1+d}$, so we thoroughly treat this case in Section~\ref{s:Slicing}.

	\subsection{Polyhedral Approximation of Normal Currents}\label{s:Approx} In this subsection, we collect some of the classical approximation results for normal currents. In the following, $\Lambda(k,d)$ is the set of $(i_{1},\cdots,i_{k}) \in \{1,\dotsc,d\}^{k}$ with $i_{1} < \cdots < i_{k}$; given $I \in \Lambda(k,d)$, we set $e_{I} := e_{i_{1}} \wedge \cdots \wedge e_{i_{k}}$.
		
	Let $T \in \Nrm_{k}(\R^d)$ and let $f: \R^d \to \R^d$ be Lipschitz. If $H$ is a Lipschitz homotopy from $\mathrm{id}_{\R^d}$ to $f$, we can view the pushforward $f_{*}T$ as a ``deformation'' of $T$ using the homotopy formula:
	\begin{align}\label{NaiveDeformation}
        f_{*}T = T + H_{*}([\![0,1]\!]\times\partial T) + \partial H_{*}([\![0,1]\!] \times T).
	\end{align}
	If we repeatedly apply (a refined version of) this procedure with well-chosen maps, we can deform $T$ into a polyhedral chain $P \in \Prm_{k}(\R^d)$ with support on a cubical `skeleton'; this is the content of the deformation theorem. Here, we shall state and give an abbreviated proof of the theorem in the case that $\partial T = 0$ so that we can compare it with the proof of the Dynamical Deformation Theorem in Section~\ref{s:dynamical}.  Before stating the theorem, we give some definitions, following~\cite[4.2.5]{Federer69book}. To start, we define
	\begin{align*}
			\Z^{d}_{j} := \{z \in \Z^{d}: \#\{i \in \{1,\dotsc,d\}: z_{i} \hspace{0.2em} \text{is even}\} = j\}.
		\end{align*}
	For each $z \in \Z^{d}_{k}$, we can define the following cubes:
	\begin{align*}
		\Wrm'(z) &:= \left\{x \in \R^d: 
        \begin{array}{cl}
            \vert x_{i} - z_{i} \vert < 1 & \text{if $z_{i}$ is even} \\
			x_{i} = z_{i} & \text{if $z_{i}$ is odd}
        \end{array}\right\}, \\
		\Wrm''(z) &:= \left\{x \in \R^d: 
        \begin{array}{cl}
			x_{i} = z_{i} & \text{if $z_{i}$ is even} \\
			\vert x_{i} - z_{i} \vert < 1 & \text{if $z_{i}$ is odd}
        \end{array}\right\}.
	\end{align*}
	The \textbf{dual cubical $k$-skeletons} of $\R^d$ are 
	\begin{align*}
			\Wrm'_{k} &:= \bigcup\{\Wrm'(z): z \in \Z^{d}_{j}, \hspace{0.2em} j \leq k\}, & 
			\Wrm''_{k} &:= \bigcup\{\Wrm''(z): z \in \Z^{d}_{j}, \hspace{0.2em} j \geq d-k\}.
	\end{align*}
	We illustrate the case $d=2$ in Figure~\ref{fig:skeletons}.
	\begin{figure}[htbp]
        \centering
		\begin{minipage}{0.45\textwidth}
			\centering
			\begin{tikzpicture}[scale=0.75]	
				\colorlet{w0}{red}
				\colorlet{w1}{green}
				\colorlet{w2}{blue}			
								
				\draw[help lines, step=1] (-3.5,-3.5) grid (3.5,3.5);
				\draw[->, thick] (-4,0)--(4,0) node[right]{$x_{1}$};
			    \draw[->, thick] (0,-4)--(0,4) node[above]{$x_{2}$};
								
				\foreach \x in {-3,-2,-1,0,1,2,3} {
			    \foreach \y in {-3,-2,-1,0,1,2,3} {
				    \begin{scope}[shift={(\x,\y)}]
				        \ifodd \x
                            \ifodd \y
                                \fill[w0] (0,0) circle (2pt);
						      \else
                                \draw[w1, thick] (0,-0.9) -- (0,0.9); 
						      \fi
					   \else
							\ifodd \y
								\draw[w1, thick] (-0.9,0) -- (0.9,0);
							\else
							     \fill[w2,opacity=0.1] (-0.9,-0.9) rectangle (0.9,0.9);
							\fi
						\fi
					\end{scope}
				}
                }
								
				\foreach \x in {-3,-2,-1,0,1,2,3} {
				    \ifodd \x
						\draw[w1, thick] (\x,-3.1) -- (\x,-3.9);
						\draw[w1, thick] (\x,3.1) -- (\x,3.9);
					\else 
				        \fill[w2,opacity=0.1] (\x-0.9,-3.9) rectangle (\x+0.9,-3.1);
                        \fill[w2,opacity=0.1] (\x-0.9,3.9) rectangle (\x+0.9,3.1);
					\fi
                }
								
				\foreach \y in {-3,-2,-1,0,1,2,3} {
					\ifodd \y
				        \draw[w1, thick] (-3.1,\y) -- (-3.9,\y);
						\draw[w1, thick] (3.1,\y) -- (3.9,\y);
					\else 
                        \fill[w2,opacity=0.1] (-3.9,\y-0.9) rectangle (-3.1,\y+0.9);
						\fill[w2,opacity=0.1] (3.9,\y-0.9) rectangle (3.1,\y+0.9);
					\fi
				}
								
                \fill[w2,opacity=0.1] (-3.1,-3.1) rectangle (-3.9,-3.9);
				\fill[w2,opacity=0.1] (-3.1,3.1) rectangle (-3.9,3.9);
				\fill[w2,opacity=0.1] (3.1,-3.1) rectangle (3.9,-3.9);
				\fill[w2,opacity=0.1] (3.1,3.1) rectangle (3.9,3.9);
								
                \fill[w2,opacity=0.1] (-2.25,-5.25) rectangle (-1.75,-4.75);
				\node[right] at (-1,-5) {$\Wrm'_{2} \backslash \Wrm'_{1}$};
								
				\draw[w1,thick] (-2.25,-6) -- (-1.75,-6);
				\node[right] at (-1,-6) {$\Wrm'_{1} \backslash \Wrm'_{0}$};
								
				\fill[w0] (-2,-7) circle (2pt); 
				\node[right] at (-1,-7) {$\Wrm'_{0}$};
			\end{tikzpicture}
        \end{minipage}
		\hfill
        \begin{minipage}{0.45\textwidth}
            \centering
			\begin{tikzpicture}[scale=0.75]
				\colorlet{w0}{red}
				\colorlet{w1}{green}
				\colorlet{w2}{blue}
								
				\draw[->,w1, thick] (-4,0)--(4,0) node[right,black]{$x_{1}$};
				\draw[->,w1, thick] (0,-4)--(0,4) node[above,black]{$x_{2}$};
				\draw[help lines, step=1] (-3.5,-3.5) grid (3.5,3.5);
								
				\foreach \x in {-3,-2,-1,0,1,2,3} {
				\foreach \y in {-3,-2,-1,0,1,2,3} {
					\begin{scope}[shift={(\x,\y)}]
                        \ifodd \x
                            \ifodd \y
                                \fill[w2,opacity=0.1] (-0.9,-0.9) rectangle (0.9,0.9);
							\else 
                                \draw[w1, thick] (-0.9,0) -- (0.9,0);
							\fi
						\else  	
							\ifodd \y
								\draw[w1, thick] (0,-0.9) -- (0,0.9);
							\else 
								\fill[w0] (0,0) circle (2pt);
                            \fi 
						\fi
					\end{scope}
				}
				}
								
				\fill[w2,opacity=0.1] (-2.25,-5.25) rectangle (-1.75,-4.75);
                \node[right] at (-1,-5) {$\Wrm''_{2} \backslash \Wrm''_{1}$};
								
				\draw[w1,thick] (-2.25,-6) -- (-1.75,-6);
				\node[right] at (-1,-6) {$\Wrm''_{1} \backslash \Wrm''_{0}$};
                
                \fill[w0] (-2,-7) circle (2pt); 
				\node[right] at (-1,-7) {$\Wrm''_{0}$};
			\end{tikzpicture}
        \end{minipage}
				
        \caption{The lattice $\Z^2$ and the standard dual cubical subdivisions of $\R^2$.}
		\label{fig:skeletons}
	\end{figure}	
		
	We now define the space of (normal) \textbf{skeletal} $k$-currents as follows:
	\begin{align*}
		\Srm_{k}(\R^d) := \{T \in \Nrm_{k}(\R^d): \supp T \subset \Wrm_{k}', \hspace{0.2em} \supp \partial T \subset \Wrm_{k-1}'\}.
	\end{align*}
	It turns out that $\Srm_{k}(\R^d)$ has a geometrically well-behaved basis; the following result can be found in~\cite[4.2.5]{Federer69book}:
		
	\begin{proposition}\label{pp:Skeletal} The vector space $\Srm_{k}(\R^d)$ is generated by the oriented $k$-dimensional cubes
	\begin{align*}
	   \{e_{I(z)} \, \mathscr{L}^{k}\mvert \Wrm'(z)  : z \in \Z^{d}_{k}\}
	\end{align*}
	where, for $z \in \Z^{d}_{k}$, $I(z)$ is the element of $\Lambda(k,d)$ consisting of those $i \in \{1,\dotsc,d\}$ such that $z_{i}$ is even. In particular, $\Srm_{k}(\R^d) \subset \Prm_{k}(\R^d)$.
	\end{proposition}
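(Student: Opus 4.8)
The plan is to prove the two inclusions $\Srm_k(\R^d) \subseteq \spn\{e_{I(z)}\,\mathscr{L}^k\mvert\Wrm'(z) : z \in \Z^d_k\}$ and $\Srm_k(\R^d) \subseteq \Prm_k(\R^d)$; the converse of the first inclusion is automatic. The second inclusion is the easy part: each $k$-dimensional unit cube $\Wrm'(z)$ can be subdivided into $k!$ oriented $k$-simplices, each a polyhedral chain (indeed an integral current, by \eqref{PolyVsIntegral}), whose signed sum is $e_{I(z)}\,\mathscr{L}^k\mvert\Wrm'(z)$ by \eqref{SimplicesAreRect}; hence, once the first inclusion is proved, every element of $\Srm_k(\R^d)$ is a finite combination of polyhedral chains.

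For the first inclusion I would fix $T \in \Srm_k(\R^d)$, so $\supp T \subseteq \Wrm'_k$ and $\supp\partial T \subseteq \Wrm'_{k-1}$, and start from the elementary decomposition
\[
    \Wrm'_k = \Big(\bigsqcup_{z\in\Z^d_k}\Wrm'(z)\Big) \cup \Wrm'_{k-1},
\]
where the union is disjoint, each $\Wrm'(z)$ is a relatively open $k$-cube in the affine $k$-plane $P_z := \{x \in \R^d : x_i = z_i \text{ whenever } z_i \text{ is odd}\}$, and $\Wrm'_{k-1}$ is closed and a countable union of cells of dimension at most $k-1$, so $\mathscr{H}^k(\Wrm'_{k-1}) = 0$. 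The key step is a localisation at interior points of the top cells: fix $z\in\Z^d_k$ and $x_0\in\Wrm'(z)$; since $\Wrm'_k\setminus\Wrm'(z)$ is closed and avoids $x_0$, there is a ball $B = B(x_0,r)$ with $B\cap\Wrm'_k = B\cap P_z$ and $B\cap\supp\partial T = \emptyset$. Then $T\mvert B$ is a normal $k$-current carried by $P_z$ with $\partial(T\mvert B) = 0$ on $B$; by \eqref{PolarInBundle} its orienting $k$-vector takes values in the decomposability bundle of $\|T\mvert B\|$, which is contained in the (constant) direction of $P_z$ because that measure is carried by $P_z$, so $T\mvert B = u\, e_{I(z)}\,\mathscr{L}^k\mvert(B\cap P_z)$; the Constancy Theorem \cite[4.1.7]{Federer69book} on the connected open set $B\cap P_z$ then forces $u$ to be constant. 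As $\Wrm'(z)$ is connected, these local constants patch to a single $c_z\in\R$ with $T\mvert\Wrm'(z) = c_z\, e_{I(z)}\,\mathscr{L}^k\mvert\Wrm'(z)$; compactness of $\supp T$ together with $\Mbf(T)<\infty$ leaves only finitely many $c_z$ nonzero.

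Set $P := \sum_{z\in\Z^d_k} c_z\, e_{I(z)}\,\mathscr{L}^k\mvert\Wrm'(z)$; this lies in $\Srm_k(\R^d)$ and in the claimed span, and $R := T - P \in \Nrm_k(\R^d)$ has mass measure $\|R\|$ concentrated on $\Wrm'_{k-1}$. The crux — and the only place where normality of $T$ (rather than $T\in\Dcal_k$ alone) is used — is to conclude $R = 0$ from the fact that $\|R\|$ is carried by a set of Hausdorff dimension $<k$. I would either invoke the standard property that a normal $k$-current assigns no mass to a set that is $\sigma$-finite for $\mathscr{H}^{k-1}$ (implicit in \cite[4.2.5]{Federer69book}), or, to remain self-contained, argue by downward induction on the skeleton: localising $R$ near a relative-interior point of a cell $\Wrm'(w)$ with $w\in\Z^d_j$, $j<k$, presents $R$ as a normal $k$-current carried by a $j$-plane, which must vanish since every $k$-vector on a $j$-plane with $j<k$ has a factor normal to the plane, and differentiating $\|R\|$ in that direction would force $\Mbf(\partial R)=\infty$; peeling off $\Wrm'_j,\Wrm'_{j-1},\dots$ drives $\supp R$ down to $\Wrm'_0$, where the same argument (a normal $k$-current with $k\ge1$ supported on a discrete set is zero) finishes. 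Hence $T = P$, proving the first inclusion. The main obstacle is exactly this last vanishing statement for $R$; the localisation of the previous paragraph, which needs some care to stay clear of the $(k-1)$-faces lying inside each $P_z$, is otherwise routine.
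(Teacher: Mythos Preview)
The paper does not give its own proof of this proposition; it simply cites \cite[4.2.5]{Federer69book}. Your argument is essentially correct and supplies a self-contained proof using the paper's own toolkit.

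Two points are worth tightening. First, when you write ``$T\mvert B$ is a normal $k$-current'', this is not automatic: restricting a normal current to an open ball can create infinite boundary mass on $\partial B$. It is cleaner to apply \eqref{PolarInBundle} directly to $T$, obtaining $\mathrm{span}\,\vec T(x)\subset V(\|T\|,x)$ for $\|T\|$-a.e.\ $x$, and then argue (via the Lipschitz function $\mathrm{dist}(\,\cdot\,,P_z)$, which fails to be differentiable at points of $P_z$ in any direction with nonzero normal component) that $V(\|T\|,x)\subset T_xP_z$ for $\|T\|$-a.e.\ $x$ in the interior of $\Wrm'(z)$. Second, the line ``$T\mvert B = u\,e_{I(z)}\,\mathscr L^k\mvert(B\cap P_z)$'' presupposes $\|T\|\ll\mathscr L^k$ on $P_z$, which you have not yet shown; what you actually have at that stage is $T\mvert B = e_{I(z)}\,\nu$ for some signed measure $\nu$ on $P_z\cap B$, and the constancy theorem (applied after pushing forward along the projection $P_z\cong\R^k$) then gives $\nu = c\,\mathscr L^k\mvert(P_z\cap B)$ in one stroke.

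Your use of the Alberti--Marchese decomposability bundle, both for the tangency of $\vec T$ on the open $k$-cells and for the vanishing of $R$ on the lower skeleta, is heavier machinery than Federer's original argument (which predates \cite{AlbertiMarchese16} by several decades and proceeds by more direct slicing and projection), but it is entirely consistent with the paper's framework and makes the vanishing step for $R$ particularly transparent: on a $j$-cell with $j<k$ one gets $V(\|R\|,x)$ contained in a $j$-plane, whence $\Wedge_k V(\|R\|,x)=\{0\}$, contradicting $\|\vec R(x)\|=1$ unless $\|R\|$ assigns no mass to that cell. Your heuristic phrasing ``differentiating $\|R\|$ in that direction would force $\Mbf(\partial R)=\infty$'' is less precise than this; I would replace it with the dimension count just stated.
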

		
	We shall also need to work with scaled versions of skeletal currents. In particular, for $\varepsilon > 0$, we define
	\begin{align*}
        \Srm_{k,\varepsilon}(\R^d) := \{P \in \Nrm_{k}(\R^d): (\lambda_{\varepsilon}^{-1})_{*}P \in \Srm_{k}(\R^d)\}
    \end{align*}
	where $\lambda_{\varepsilon}: \R^d \to \R^d$ is the homothety given by $\lambda_{\varepsilon}(x) := \varepsilon x$ for $x \in \R^d$. We can now state the deformation theorem (in the case $\partial T = 0$):
		
	\begin{proposition}\label{pp:BoundarylessDeformation} Let $T \in \Nrm_{k}(\R^d)$  with $\partial T = 0$ and fix $\varepsilon > 0$. Then there is $P \in \Srm_{k,\varepsilon}(\R^d)$ and $R \in \Nrm_{k+1}(\R^d)$ such that $T = P + \partial R$ and the following mass estimates hold with a constant $\gamma > 0$ depending only on $k$ and $d$:
	\begin{align}\label{DeformMassEsts}
		\Mbf(P) &\leq \gamma \Mbf(T) & \Mbf(R) &\leq \gamma\varepsilon \Mbf(T).
	\end{align}
	Furthermore, we have $\supp P$, $\supp R \subset (\supp T)_{2d\varepsilon}$.
	\end{proposition}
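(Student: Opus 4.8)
The plan is to run the classical Federer--Fleming deformation argument in the streamlined form available when $\partial T = 0$, using Proposition~\ref{pp:Skeletal} and the homotopy formula \eqref{NaiveDeformation}. First I reduce to $\varepsilon = 1$: granting that case, apply it to $\tilde{T} := (\lambda_\varepsilon^{-1})_{*}T \in \Nrm_{k}(\R^d)$, which is again boundaryless, to obtain $\tilde{P} \in \Srm_{k}(\R^d)$ and $\tilde{R} \in \Nrm_{k+1}(\R^d)$ with $\tilde{T} = \tilde{P} + \partial\tilde{R}$; pushing forward by $\lambda_\varepsilon$ and using that $\partial$ commutes with pushforward gives $T = (\lambda_\varepsilon)_{*}\tilde{P} + \partial(\lambda_\varepsilon)_{*}\tilde{R}$ with $(\lambda_\varepsilon)_{*}\tilde{P} \in \Srm_{k,\varepsilon}(\R^d)$ by definition of that space. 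Since $(\lambda_\varepsilon)_{*}$ scales the mass of an $l$-current by $\varepsilon^{l}$ while $\Mbf(\tilde{T}) = \varepsilon^{-k}\Mbf(T)$, the bounds \eqref{DeformMassEsts} transform as claimed — the surviving factor $\varepsilon$ in the estimate for $R$ being the extra power of $\varepsilon$ carried by a $(k+1)$-current — and the support statement rescales to $(\supp T)_{2d\varepsilon}$.

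So fix $\varepsilon = 1$. For each $j = d, d-1, \dots, k+1$ I would take $g_{j}$ to be the map that is the identity on the $(j-1)$-skeleton $\Wrm'_{j-1}$ and, on each open $j$-cube $\Wrm'(z)$ with $z \in \Z^d_{j}$, is the radial retraction from a chosen interior centre $a_{z}$ onto $\partial\Wrm'(z) \subset \Wrm'_{j-1}$; this map is Lipschitz away from the centres, so for centres not charged by the current at hand it pushes a current of finite mass to one supported one skeleton lower. Since $\Wrm'_{d} = \R^d$, the current $T$ already lives on the $d$-skeleton, so set $U_{d} := T$, $U_{j-1} := (g_{j})_{*}U_{j}$, and $P := U_{k} = (g_{k+1})_{*}\cdots(g_{d})_{*}T$. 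Inductively $\partial U_{j} = 0$, so \eqref{NaiveDeformation} applied to the straight-line homotopy between $\mathrm{id}$ and $g_{j}$ reads $U_{j-1} = U_{j} + \partial H^{(j)}_{*}([\![0,1]\!]\times U_{j})$; summing over $j$ yields $T = P + \partial R$ with $R := -\sum_{j=k+1}^{d} H^{(j)}_{*}([\![0,1]\!]\times U_{j}) \in \Nrm_{k+1}(\R^d)$ (its boundary mass equals that of $T - P$, hence is finite). As a pushforward of the boundaryless $T$, $P$ has $\partial P = 0$, while $\supp P \subset \Wrm'_{k}$ by construction, so $P \in \Srm_{k}(\R^d) = \Srm_{k,1}(\R^d)$ — indeed a polyhedral chain, by Proposition~\ref{pp:Skeletal}.

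The step I expect to be the main obstacle is the choice of the centres $a_{z}$: the radial retraction from $a$ has Lipschitz constant blowing up near $a$, so arbitrary centres would neither make the pushforward well behaved nor keep the mass under control. Following Federer--Fleming, one integrates $\Mbf((p_{a})_{*}U_{j})$ and $\Mbf(H^{a}_{*}([\![0,1]\!]\times U_{j}))$ over $a$ ranging in, say, the concentric half-size sub-cube against Lebesgue measure, bounds both integrals by $c(k,d)\,\Mbf(U_{j})$, and extracts by a pigeonhole a centre $a_{z}$ for which both are controlled; these estimates use only mass and the current structure, not integrality, so they apply to normal currents verbatim. Iterating over $j = d,\dots,k+1$ costs only a dimensional factor, which gives $\Mbf(P) \leq \gamma\Mbf(T)$ and, since each $H^{(j)}_{*}([\![0,1]\!]\times U_{j})$ carries one factor of the order-one cell diameter, $\Mbf(R) \leq \gamma\Mbf(T)$ — which at $\varepsilon = 1$ is the asserted bound. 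Finally, bookkeeping the displacements, each occurring inside a cell of the unit-scale cubical complex, confines $\supp P$ and $\supp R$ to the stated neighbourhood of $\supp T$.
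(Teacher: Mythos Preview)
Your argument is correct but takes a different route from the paper's. The paper does not iterate cell-by-cell; instead it invokes Federer's single retraction $\sigma\colon \R^d\setminus\Wrm''_{d-k-1}\to\Wrm'_k$ from~\cite[4.2.6]{Federer69book}, composed with a \emph{global} translation $\tau_a$ whose shift $a\in[-1,1]^d$ is selected by the averaging estimate~\cite[4.2.7]{Federer69book}, and then handles the singular set $\Wrm''_{d-k-1}$ by a truncation-and-limit argument via~\cite[4.2.2]{Federer69book}. Your approach instead descends through the skeleta one dimension at a time, choosing a separate centre $a_z$ in each cube by the same Fubini-type averaging; this is the version closer to the original Federer--Fleming paper and to Simon's lecture notes. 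The paper's route is shorter here because it can quote~\cite[4.2.2, 4.2.6, 4.2.7]{Federer69book} as black boxes, and --- more to the point for this article --- it sets up exactly the objects ($\sigma$, $u$, $a$, $V_r$) that are reused verbatim in the proof of the Dynamical Deformation Theorem~\ref{tm:STDeform}. Your route is more self-contained and makes the support bound geometrically transparent (all retractions stay inside the initial $d$-cube). One small tightening: saying the centre is ``not charged by the current'' is not by itself enough for $(g_j)_*U_j$ to have finite mass --- you need precisely the integral bound that your averaging step supplies, so that earlier sentence should already point to it.
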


    Here, for a set $A \subset \R^{d}$ and $\delta > 0$, we have defined $A_\delta := \{ x \in \R^{d} : \mathrm{dist}(x,A) < \delta\}$. Again, let $T \in \Nrm_{k}(\R^{d})$. The proof of the deformation theorem is the content of~\cite[4.2.9]{Federer69book}, but this simplifies in the case that $\partial T = 0$.
        
    The key to the proof is to construct the ``well-chosen maps'' mentioned at the start of this subsection. The most important of these is a retraction $\sigma = \sigma_{k}: \R^d\backslash\Wrm''_{d-k-1} \to \Wrm'_{k}$ which is defined in~\cite[4.2.6]{Federer69book}. We give an illustration of this map in the case $d=2$, $k=1$ in Figure~\ref{fig:retract}.
		
	\begin{figure}[h]
		\centering
		\begin{tikzpicture}[scale=0.75]
			\colorlet{w0}{red}
			\colorlet{w1}{green}
						
			\draw[->, thick] (-4,0) -- (4,0) node[right]{$x_{1}$};
			\draw[->, thick] (0,-4) -- (0,4) node[above]{$x_{2}$};
						
			\draw[w1, thick] (-3,-3.9) -- (-3,3.9);
			\draw[w1, thick] (3,-3.9) -- (3,3.9);
			\draw[w1, thick] (-3.9,-3) -- (3.9,-3);
			\draw[w1, thick] (-3.9,3) -- (3.9,3);
						
			\draw[gray, dashed] (0,0) -- (2,3);
			\draw[gray, dashed] (0,0) -- (-3,-2);
			\draw[gray, dashed] (0,0) -- (-1.5,3);
			\draw[gray, dashed] (0,0) -- (3,-3);
						
			\fill[blue] (1,1.5) circle (2pt) node[left]{$x$};
			\fill[blue] (2,3) circle (2pt) node[above]{$\sigma(x)$};
			\fill[blue] (-1.5,-1) circle (2pt);
			\fill[blue] (-3,-2) circle (2pt);
			\fill[blue] (-1,2) circle (2pt);
			\fill[blue] (-1.5,3) circle (2pt);
			\fill[blue] (1,-1) circle (2pt);
			\fill[blue] (3,-3) circle (2pt);
						
			\fill[w0] (0,0) circle (2pt);
		\end{tikzpicture}
				
		\caption{The retraction $\sigma: \R^{2} \backslash \Wrm''_{0} \to \Wrm'_{1}$.}
		\label{fig:retract}
	\end{figure}
		
	In~\cite[4.2.6]{Federer69book}, it is shown that
	\begin{align}\label{DeformEst1}
		\vert \sigma(x) - x \vert \leq d^{1/2} \hspace{2em} \text{for $x \in \R^d\backslash\Wrm''_{d-k-1}$}.
	\end{align}
	Notice that, although $\sigma$ is locally Lipschitz, it cannot be extended to a locally Lipschitz function on $\R^d$. Fix $\delta > 0$. If we restricted our attention to $T \in \Nrm_{k}(\R^{d})$ with $(\supp T)_{\delta} \subset \R^d \backslash \Wrm''_{d-k-1}$, then Proposition~\ref{pp:BoundarylessDeformation} would follow (with $\gamma > 0$ depending also on $\delta$) by setting $P = \sigma_{*}T$ and using the homotopy formula as in~\eqref{NaiveDeformation}. In general, however, we must adopt a more careful approach.
		
	\begin{proof}[Proof of Proposition~\ref{pp:BoundarylessDeformation}] First note that we can rescale using the homothety $\lambda_{\varepsilon}$ to reduce to the case that $\varepsilon = 1$. If $k = d$, then we can use the constancy theorem (see~\cite[4.1.7]{Federer69book}) to deduce that $T = 0$ and there is nothing to show. Otherwise, let $u = u_{k}: \R^d \to \R$ be the auxiliary map constructed in~\cite[4.2.6]{Federer69book}. It is shown in~\cite[4.2.6]{Federer69book} that $\Wrm''_{d-k-1} = u^{-1}(\{0\})$ and that
	\begin{align}\label{DeformEst2}
		\Vert D\sigma(x) \Vert \leq \frac{k+1}{u(x)} \leq \frac{d}{u(x)} \hspace{2em} \text{for $\mathscr{L}^{d}$-a.e.\ $x \in \R^d\backslash\Wrm''_{d-k-1}$}.
	\end{align}
	Now we use~\cite[4.2.7]{Federer69book} to find $a \in [-1,1]^{d}$ with 
	\begin{align}\label{DeformEst3}
		\int \frac{1}{u(x+a)^{k}} \dd \Vert T \Vert(x) \leq 2\binom{d}{k}\Mbf(T).
	\end{align}
	Let $\tau_{a}: \R^d \to \R^d$ where $\tau_{a}(x) = x+a$ for $x \in \R^d$. Set 
    \begin{align*}
        v := \frac{u \circ \tau_{a}}{d}.
    \end{align*}
    It follows from~\eqref{DeformEst3} that
	\begin{align}\label{DeformEst4}
		\Vert T \Vert(v^{-1}(\{0\})) = 0.
	\end{align}
	For $r > 0$, set 
    \begin{align*}
        V_{r} := v^{-1}((r,\infty))
    \end{align*}
    and define
	\begin{align*}
		P_{r} &:= (\sigma \circ \tau_{a})_{*}(T \mvert V_{r}), & R_{r} &:= H_{*}([\![0,1]\!] \times (T \mvert V_{r})),
	\end{align*}
	where $H: [0,1] \times \R^{d} \to \R^d$ is the affine homotopy from $\mathrm{id}_{\R^d}$ to $\sigma \circ \tau_{a}$. Notice that, for $r > 0$, $\sigma \circ \tau_{a}$ and $H$ are Lipschitz on $V_{r}$ and $[0,1] \times V_r$ respectively. Since we have~\eqref{DeformEst1},~\eqref{DeformEst2},~\eqref{DeformEst3}, and~\eqref{DeformEst4}, we can appeal to~\cite[4.2.2]{Federer69book} to deduce that the following limits exist (with respect to the \textit{strong} convergence, that is, vanishing mass of the difference):
	\begin{align*}
		P &:= \lim_{r \downarrow 0} P_{r}, & R &:= \lim_{r \downarrow 0} R_{r}.
	\end{align*}
	Furthermore, we have $T = P + \partial R$ and
	\begin{align}\label{DeformEst6}
		\Mbf(P) &\leq \int \frac{1}{v(x)^{k}} \dd \Vert T \Vert(x), & \Mbf(R) &\leq \int \vert \sigma(x+a) - x \vert \cdot \frac{1}{v(x)^{k}} \dd \Vert T \Vert(x).
	\end{align}
	In particular, $P \in \Nrm_{k}(\R^{d})$ and $R \in \Nrm_{k+1}(\R^{d})$. It follows from~\eqref{DeformEst1} that
	\begin{align}\label{DeformEst5}
		\vert \sigma(x+a) - x \vert < 2d^{1/2} \hspace{2em} \text{whenever $x+a \not\in \Wrm''_{d-k-1}$}.
	\end{align}
	Notice that $x+a \not\in \Wrm''_{d-k-1}$ if and only if $v(x) > 0$, so it follows from~\eqref{DeformEst3} that~\eqref{DeformEst5} holds for $\Vert T \Vert$-a.e.\ $x \in \R^{d}$; the mass estimates~\eqref{DeformMassEsts} then follow from~\eqref{DeformEst6},~\eqref{DeformEst3}, and~\eqref{DeformEst5}. Now fix $r > 0$ and notice that
	\begin{align}\label{DeformSptEsts}
		\supp P_{r} &\subset (\sigma \circ \tau_{a})(\supp T \cap V_{r}), & \supp R_{r} &\subset H([0,1] \times (\supp T \cap V_{r})).
	\end{align}
	Since $\supp T \cap V_{r} \subset \R^d\backslash\Wrm''_{d-k-1}$, we have $\supp P_{r} \subset (\supp T)_{2d}$ from~\eqref{DeformEst5}. Similarly, for $t \in [0,1]$ and $x \in \supp T \cap V_{r}$, we have
	\begin{align*}
		\vert (1-t)x + t\sigma(x+a) - x \vert = t\vert \sigma(x+a) - x\vert 
	\end{align*}
	and so we have $\supp R_{r} \subset (\supp T)_{2d}$ from~\eqref{DeformEst5} and~\eqref{DeformSptEsts}. Since $r > 0$ was arbitrary, we deduce that $\supp P$, $\supp R \subset (\supp T)_{2d}$. Finally, we have $P \in \Srm_{k}(\R^d)$ since $\supp P_{r} \subset \Wrm'_{k}$ for $r > 0$.
	\end{proof}

	The following result, which can be found in~\cite[Section 1.3.4, Theorem 6]{GiaquintaModicaSoucek98book2}, shows that we can strictly approximate a normal current with polyhedral boundary by polyhedral chains with the \textit{same} boundary. The proof actually demonstrates that, in the case that the original current is boundaryless, the approximant is \textit{homologous} to the original current and so, for this reason, we rephrase the result here:
	
	\begin{proposition}\label{pp:StrictPolyEqlBdry} Let $K \subset \R^d$ be compact and fix $\varepsilon > 0$. Let $T \in \Nrm_{k}(\R^d)$. Suppose that $\supp T \subset \interior K$ (the interior of $K$) and that $\partial T \in \Prm_{k-1}(\R^d)$. Then there is $P \in \Prm_{k}(\R^d)$ and $R \in \Nrm_{k+1}(\R^d)$ with $\supp P$, $\supp R \subset K$ and 
	\begin{align*}
		T &= P + \partial R, & \Mbf(P) &\leq \Mbf(T) + \varepsilon, & \Mbf(R) &\leq \varepsilon.
	\end{align*}
	\end{proposition}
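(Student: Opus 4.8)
This is, up to rephrasing, the refined polyhedral approximation theorem for normal currents (Federer~\cite[4.2.20]{Federer69book}, in the form of~\cite[Chapter 1, Section 3.4, Theorem 6]{GiaquintaModicaSoucek98book2}), so the plan is to prove that theorem and to use the two standing hypotheses to sharpen its conclusions. The assumption $\supp T \subset K^{\circ}$ will only serve to force the supports into $K$: every deformation used below displaces supports by at most a dimensional constant times the underlying scale $\rho$, so it suffices to take $\rho < \dist(\supp T,\partial K)$ divided by that constant. The assumption $\partial T \in \Prm_{k-1}(\R^{d})$ is what lets us pass from a polyhedral chain merely \emph{close} to $T$ to one \emph{homologous} to $T$ with boundary exactly $\partial T$.

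\emph{Step 1: a polyhedral chain homologous to $T$, with a crude mass bound.} Being a polyhedral cycle, $\partial T$ may be deformed onto the dual cubical $(k-1)$-skeleton of a scale-$\rho$ grid, $\partial T = z + \partial V$, where $z \in \Prm_{k-1}(\R^{d})$ is supported on that skeleton, $V \in \Prm_{k}(\R^{d})$ (the filling is polyhedral since the input is), $\Mbf(V) \leq \gamma\rho\Mbf(\partial T)$, and all supports lie in a dimensional-constant $\rho$-neighbourhood of $\supp T$. Then $T - V$ is normal and $\partial(T - V) = z$ lies on a skeleton that is fixed pointwise by the retraction of~\cite[4.2.6]{Federer69book}, so applying the deformation theorem (the general form of Proposition~\ref{pp:BoundarylessDeformation}, i.e.~\cite[4.2.9]{Federer69book}) to $T - V$ makes the boundary-deformation term vanish and yields
\begin{align*}
    T - V = P'' + \partial R'', \qquad \partial P'' = z, \qquad P'' \in \Prm_{k}(\R^{d}),
\end{align*}
with $\Mbf(P'') \leq \gamma\Mbf(T - V)$, $\Mbf(R'') \leq \gamma\rho\Mbf(T - V)$, and all supports in a dimensional-constant $\rho$-neighbourhood of $\supp T$, hence in $K^{\circ}$ for $\rho$ small enough. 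Thus $P_{0} := P'' + V \in \Prm_{k}(\R^{d})$ satisfies $\partial P_{0} = z + \partial V = \partial T$ and $T - P_{0} = \partial R''$ with $\Mbf(R'')$ as small as we please and $\supp P_{0},\supp R'' \subset K^{\circ}$. Everything is in hand except the sharp mass bound, since so far we only know $\Mbf(P_{0}) \leq \gamma\Mbf(T) + (1+\gamma)\gamma\rho\Mbf(\partial T)$.

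\emph{Step 2: sharpening the mass constant.} It remains to trade the dimensional constant $\gamma$ for $1 + o(1)$, which is the substantive content of the cited approximation theorems; I would follow their proof. The overshoot is geometric: retracting an obliquely oriented tangent $k$-vector onto an axis-aligned grid replaces its comass by essentially the $\ell^{1}$-norm of its coordinates, larger by up to a factor $\binom{d}{k}$. To remove this loss one first regularises $T$ --- e.g.\ by mollification, which does not increase mass and moves $\supp T$ and $\supp(\partial T)$ arbitrarily little --- reducing to a current with continuous unit tangent field; one then covers a neighbourhood of its support by small pieces, on each piece deforms onto a cubical grid \emph{rotated} to be nearly aligned with the local orientation, so that the local deformations incur negligible comass loss and the polyhedral output has mass within $\varepsilon$ of $\Mbf(T)$; the boundary mismatches between adjacent grids are repaired by polyhedral transition chains of total mass $O(\varepsilon)$, and the whole construction is organised as a telescoping sum over geometrically decreasing scales so that the auxiliary $(k+1)$-dimensional fillings have total mass below $\varepsilon$ while, by the device of Step 1, the running polyhedral approximant keeps the exact boundary $\partial T$. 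This produces $P \in \Prm_{k}(\R^{d})$ with $\partial P = \partial T$, $\Mbf(P) \leq \Mbf(T) + \varepsilon$, and $T - P = \partial S$ with $\Mbf(S) \leq \varepsilon$ and all supports in $K$, as claimed. The hard part, I expect, is precisely this alignment-and-gluing step: controlling the mass of the transition chains that patch together the differently rotated local grids while keeping the global boundary unchanged. The bookkeeping in Step 1 is routine once one observes that a polyhedral boundary can be pushed onto the skeleton exactly.
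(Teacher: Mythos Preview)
The paper does not give its own proof of this proposition: it simply cites~\cite[Chapter 1, Section 3.4, Theorem 6]{GiaquintaModicaSoucek98book2} (the same reference you identified) and rephrases the statement to emphasise that the proof there actually produces an approximant \emph{homologous} to $T$, i.e.\ yields the filling $S$ with $T = P + \partial S$. Your outline is a correct sketch of that standard proof --- Step~1 (deforming $\partial T$ onto a skeleton, then deforming $T-V$) is exactly how one obtains homology with a crude mass constant, and Step~2 (mollifying, covering, deforming onto locally rotated grids aligned with the tangent field, and gluing) is precisely Federer's refinement~\cite[4.2.20]{Federer69book} that sharpens $\gamma$ to $1+o(1)$ --- so there is nothing to correct, only to acknowledge that you have reproduced the argument the paper defers to the literature.
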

	
	\subsection{Bi-Lipschitz Equivalence of Normal Flat Norms}\label{s:FlatEquiv} Let $K \subset \R^d$ be compact. We have $\Fbf_{K}(T) \leq \Fbf_{K}^{\circ}(T)$ for $T \in \Brm_{k}(K)$ by definition, but the reverse inequality fails in general. To see this, consider
	\begin{align*}
		T = \delta_{2} - \delta_{-2} \in \Brm_{0}([-2,2]).
	\end{align*}
	Notice that $\Fbf_{[-2,2]}(T) \leq \Mbf(T) = 2$, but one can show (using the constancy theorem, see~\cite[4.1.7]{Federer69book}) that $S = [\![-2,2]\!] \in \Nrm_{1}([-2,2])$ is the unique normal $1$-current with $\partial S = T$, so $\Fbf^\circ_{[-2,2]}(T) = 4$. Nevertheless, provided that $K$ is also a Lipschitz neighbourhood retract, it turns out that $\Fbf_{K}^{\circ}$ and $\Fbf_{K}$ are bi-Lipschitz equivalent on $\Brm_{k}(K)$. This was first shown in~\cite[3.2]{Federer74}; there, the result corresponds to setting $B = \varnothing$, and the proof simplifies. For this reason, we present this proof below, starting with the following result:
	
	\begin{proposition}\label{pp:HomFlatByMass} Let $K \subset \R^d$ be a compact Lipschitz neighbourhood retract. Then there is $C > 0$ (depending only on $k$, $d$, and $K$) such that, for $T \in \Brm_{k}(K)$,
		\begin{align*}
			\Fbf_{K}^{\circ}(T) \leq C \Mbf(T).
		\end{align*}
	\end{proposition}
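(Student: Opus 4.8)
The plan is to deform $T$ onto a fixed finite cubical complex sitting between $K$ and a neighbourhood on which $K$ admits a Lipschitz retraction, to exploit that on such a finite complex the mass and homogeneous flat norms are automatically comparable on boundaries by finite-dimensionality, and then to push the resulting filling back into $K$ by the retraction. Concretely, I would fix an open set $U \supset K$ and a Lipschitz retraction $\rho \colon U \to K$, write $L := \Lip(\rho)$, and choose $\varepsilon > 0$ small enough — depending only on $K$ and $d$, in fact comparable to $\dist(K,\partial U)$ up to an absolute constant — that the finitely many applications of the deformation theorem below keep all relevant supports inside a fixed compact cubical complex $N$ built from closed cubes of the lattice $\varepsilon\Z^{d}$, with $K \subset N \subset U$. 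Let $V$ denote the space of polyhedral $(k+1)$-chains supported on the $(k+1)$-skeleton of $N$: by the rescaled form of Proposition~\ref{pp:Skeletal} this is finite-dimensional, spanned by the finitely many oriented $\varepsilon$-subcubes of $N$, and on it $\Mbf$ is a genuine norm, namely $\varepsilon^{k+1}$ times the $\ell^{1}$-norm of the coefficient vector. Hence $\partial$ restricts to a linear map from $V$ into the finite-dimensional space of polyhedral $k$-cycles on the $k$-skeleton of $N$, and its image $\partial V$ carries a linear right inverse $\iota$ with $\Mbf(\iota(Q)) \le C_{N}\,\Mbf(Q)$ for all $Q \in \partial V$, where $C_{N}$ depends only on $N$, hence only on $k$, $d$, and $K$.

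Next, given $T \in \Brm_{k}(K)$, so that $\partial T = 0$ and $T = \partial Z_{0}$ for some $Z_{0} \in \Nrm_{k+1}(K) \subset \Nrm_{k+1}(N)$, I would apply Proposition~\ref{pp:BoundarylessDeformation} at scale $\varepsilon$ to obtain $Q \in \Srm_{k,\varepsilon}(\R^{d})$ and $R \in \Nrm_{k+1}(\R^{d})$ with $T = Q + \partial R$, with $\supp Q,\ \supp R \subset (\supp T)_{2d\varepsilon} \subset N$, and with $\Mbf(Q) \le \gamma\Mbf(T)$, $\Mbf(R) \le \gamma\varepsilon\Mbf(T)$; in particular $Q$ is a polyhedral $k$-cycle on the $k$-skeleton of $N$. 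The crucial point is that $Q \in \partial V$: indeed $Q = T - \partial R = \partial(Z_{0} - R)$ with $Z_{0} - R \in \Nrm_{k+1}(N)$, and applying the general deformation theorem~\cite[4.2.9]{Federer69book} (of which Proposition~\ref{pp:BoundarylessDeformation} is the boundaryless case) to $Z_{0} - R$ at scale $\varepsilon$ — whose boundary $Q$ is already polyhedral on the lattice $\varepsilon\Z^{d}$, hence untouched by the deformation — produces $W_{0} \in V$ with $\partial W_{0} = \partial(Z_{0}-R) = Q$ (alternatively one may invoke Proposition~\ref{pp:StrictPolyEqlBdry} followed by a further deformation onto the cubical $(k+1)$-skeleton). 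Thus $Q \in \partial V$, and $S := \iota(Q) + R \in \Nrm_{k+1}(N)$ satisfies $\partial S = \partial\iota(Q) + \partial R = Q + \partial R = T$ together with $\Mbf(S) \le \Mbf(\iota(Q)) + \Mbf(R) \le C_{N}\Mbf(Q) + \gamma\varepsilon\Mbf(T) \le \gamma(C_{N}+\varepsilon)\Mbf(T)$.

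Finally I would push this filling back into $K$. Since $\rho$ fixes $K$ pointwise and $\supp T \subset K$, functoriality of the pushforward gives $\rho_{*}T = T$, so $\rho_{*}S \in \Nrm_{k+1}(K)$ obeys $\partial(\rho_{*}S) = \rho_{*}(\partial S) = \rho_{*}T = T$; moreover, by the standard pushforward mass estimate (cf.~\eqref{LipPushFwdFormula}), $\Mbf(\rho_{*}S) \le L^{k+1}\Mbf(S)$. Combining these bounds, $\Fbf_{K}^{\circ}(T) \le \Mbf(\rho_{*}S) \le L^{k+1}\gamma(C_{N}+\varepsilon)\Mbf(T)$, which is the claim with $C := L^{k+1}\gamma(C_{N}+\varepsilon)$ depending only on $k$, $d$, and $K$. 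I expect the main obstacle to be the middle step: one must verify that a polyhedral cycle which bounds a \emph{normal} current inside $N$ also bounds a \emph{polyhedral} chain supported on the cubical skeleton of $N$ — this is exactly where the hypothesis $T \in \Brm_{k}(K)$ is consumed, and it is precisely what fails in the annulus example from the introduction — and one must carefully track that the supports produced by the finitely many uses of the deformation theorem all remain inside $N \subset U$, which is what forces $\varepsilon$ to be small relative to $\dist(K,\partial U)$.
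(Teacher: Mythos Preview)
Your overall architecture --- deform $T$ to a skeletal cycle, exploit finite-dimensionality, then retract to $K$ --- matches the paper's. The difference lies in how finite-dimensionality is used, and that difference exposes a gap in your argument.

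The paper retracts \emph{first}: writing $T = f_*P + \partial(f_*S)$, it observes that $f_*P$ lies in the finite-dimensional space
\[
  \{f_*R : R \in \Srm_{k,\varepsilon}(\R^d),\ \supp R \subset U\}\ \cap\ \Brm_k(K),
\]
on which both $\Fbf_K$ and $\Fbf_K^\circ$ are genuine norms (the latter automatically, since we are inside $\Brm_k(K)$), hence comparable. No explicit filling of $P$ is ever constructed; the estimate $\Fbf_K^\circ(f_*P) \le \tilde C\,\Fbf_K(f_*P)$ comes for free from equivalence of norms, and a short chain of triangle inequalities finishes.

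Your route instead requires the explicit skeletal filling $W_0 \in V$ with $\partial W_0 = Q$, and the justification you give for this is not correct as stated. The general deformation theorem~\cite[4.2.9]{Federer69book} involves a translation by some $a \in [-1,1]^d$ chosen so that the integral $\int u(x+a)^{-(k+1)}\,d\|Z_0 - R\|(x)$ is finite; for a generic normal current $Z_0 - R$ one cannot take $a = 0$, and with $a \neq 0$ the boundary $Q$ is shifted off the skeleton before the retraction $\sigma$ is applied, so it is \emph{not} ``untouched''. Your alternative via Proposition~\ref{pp:StrictPolyEqlBdry} followed by a further deformation runs into the same issue at the second step (and also produces a polyhedral, not cubical, chain). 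The claim $Q \in \partial V$ is in fact true --- it is precisely injectivity of the map from cubical to normal homology of the finite complex $N$ --- but proving it carefully (e.g.\ via mollification of $Z_0 - R$ to force $a = 0$, together with support and limit arguments) is more work than you indicate. The paper's proof is slicker exactly because it never needs this: by retracting before invoking finite-dimensionality and comparing $\Fbf_K^\circ$ with $\Fbf_K$ rather than building a filling, it uses only that $\Fbf_K^\circ$ is tautologically finite on $\Brm_k(K)$.
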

	
	\begin{proof} Since $K$ is a compact Lipschitz neighbourhood retract, there is a bounded neighbourhood $U \subset \R^d$ of $K$ and a Lipschitz retraction $r: U \to K$. Fix $\varepsilon > 0$ with $K_{2d\varepsilon} \subset U$. We apply the deformation theorem to $T$ to obtain $P \in \Srm_{k,\varepsilon}(\R^d)$ and $R \in \Nrm_{k+1}(\R^d)$ with $T = P + \partial R$ and $\supp P$, $\supp R \subset U$. It follows that $T = r_{*}P + \partial r_{*}R$; since $T \in \Brm_{k}(K)$, we have $r_{*}P \in \Brm_{k}(K)$. We can use the estimates in the deformation theorem to obtain
	\begin{align}\label{RelFlatEst1}
		\Fbf_{K}^{\circ}(T - r_{*}P) \leq \Mbf(r_{*}R) \leq \varepsilon\gamma\cdot\Lip(r)^{k+1}\Mbf(T).
	\end{align}
	Now notice that the following space is finite-dimensional by Proposition~\ref{pp:Skeletal}:
	\begin{align*}
		\{r_{*}P: P \in \Srm_{k,\varepsilon}(\R^d), \hspace{0.2em} \supp P \subset U\} \cap \Brm_{k}(K).
	\end{align*}
	Since $\Fbf^\circ_{K}$ and $\Fbf_{K}$ are norms on this space, there is $\tilde{C} > 0$ (depending on $k$, $d$, and $K$) such that, for $P \in \Srm_{k,\varepsilon}(\R^{d})$ with $\supp P \subset U$ and $r_{*}P \in \Brm_{k}(K)$,
	\begin{align}\label{RelFlatEst2}
		\Fbf_{K}^{\circ}(r_{*}P) \leq \tilde{C}\Fbf_{K}(r_{*}P).
	\end{align}
	Combining~\eqref{RelFlatEst1} and~\eqref{RelFlatEst2} and using that $\Fbf_{K} \leq \Fbf^\circ_{K}$ yields
	\begin{align*}
		\Fbf_{K}^{\circ}(T) &\leq \Fbf_{K}^{\circ}(r_{*}P) + \Fbf_{K}^{\circ}(T-r_{*}P) \\ 
		&\leq \tilde{C}\Fbf_{K}(r_{*}P) + \Fbf_{K}^{\circ}(T-r_{*}P) \\ 
		&\leq \tilde{C}(\Fbf_{K}(T) + \Fbf_{K}(r_{*}P - T)) + \Fbf_{K}^{\circ}(T-r_{*}P) \\
		&\leq \tilde{C}(\Fbf_{K}(T) + \Fbf_{K}^{\circ}(r_{*}P - T)) + \Fbf_{K}^{\circ}(T-r_{*}P) \\
		&\leq \tilde{C}\Fbf_{K}(T) + \gamma\varepsilon(\tilde{C}+1)\Lip(r)^{k+1}\Mbf(T),
	\end{align*} 
	and since $\Fbf_{K}(T) \leq \Mbf(T)$, we obtain the result with $C = \tilde{C} +\gamma\varepsilon(\tilde{C}+1)\Lip(r)^{k+1}$. 
	\end{proof}
	We remark that Proposition~\ref{pp:HomFlatByMass} also corresponds to the ``linear isoperimetric inequality'' of~\cite{DePauwHardt22} in the setting of Lipschitz neighbourhood retracts.
	
	\begin{corollary}\label{cr:HomFlatVsFlat} Let $K \subset \R^d$ be a compact Lipschitz neighbourhood retract. Then there is $C > 0$ (depending only on $k$, $d$, and $K$) such that, for $T \in \Brm_{k}(K)$, 
	\begin{align*}
	   \Fbf_{K}^{\circ}(T) \leq C \Fbf_{K}(T).
	\end{align*}
	\end{corollary}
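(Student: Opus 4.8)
The plan is to reduce the statement directly to Proposition~\ref{pp:HomFlatByMass} (the linear isoperimetric inequality) by exploiting the decomposition defining the flat norm. Fix $T \in \Brm_{k}(K)$. Since the infimum in~\eqref{FlatDefn} is attained, I would choose $S \in \Nrm_{k+1}(K)$ with $\Fbf_{K}(T) = \Mbf(T - \partial S) + \Mbf(S)$ and set $Q := T - \partial S$. The point to notice is that $Q$ is itself a boundary: $\partial S \in \Brm_{k}(K)$ by the very definition of $\Brm_{k}(K)$, and $T \in \Brm_{k}(K)$ by hypothesis, so $Q = T - \partial S$ lies in the vector space $\Brm_{k}(K)$. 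Consequently Proposition~\ref{pp:HomFlatByMass} is applicable to $Q$ and provides $C > 0$, depending only on $k$, $d$, and $K$, with $\Fbf_{K}^{\circ}(Q) \leq C\,\Mbf(Q)$.

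It then remains to recombine. Using that $\Fbf_{K}^{\circ}$ is a norm on $\Brm_{k}(K)$, the decomposition $T = Q + \partial S$, and the trivial bound $\Fbf_{K}^{\circ}(\partial S) \leq \Mbf(S)$ (obtained by taking $S$ itself as competitor in the definition of $\Fbf_{K}^{\circ}$), one gets
\[
\Fbf_{K}^{\circ}(T) \leq \Fbf_{K}^{\circ}(Q) + \Fbf_{K}^{\circ}(\partial S) \leq C\,\Mbf(Q) + \Mbf(S) \leq \max\{C,1\}\bigl(\Mbf(Q) + \Mbf(S)\bigr) = \max\{C,1\}\,\Fbf_{K}(T),
\]
which is the claim with constant $\max\{C,1\}$. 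If one prefers to avoid invoking attainment of the infimum in~\eqref{FlatDefn}, the identical computation carried out with an $\eta$-almost-optimal $S$, followed by letting $\eta \downarrow 0$, works verbatim.

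I do not expect a genuine obstacle here: the only step requiring a moment's thought is recognising that the ``defect'' $Q = T - \partial S$ in the flat-norm decomposition is again an element of $\Brm_{k}(K)$, which is exactly what makes the mass-controlled estimate of Proposition~\ref{pp:HomFlatByMass} available; everything else is the triangle inequality for $\Fbf_{K}^{\circ}$ and the elementary bound $\Fbf_{K}^{\circ}(\partial S) \leq \Mbf(S)$.
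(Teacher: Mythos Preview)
Your proof is correct and follows essentially the same approach as the paper's own proof: choose an optimal $S$ in the flat-norm decomposition, observe that $T - \partial S \in \Brm_{k}(K)$, apply Proposition~\ref{pp:HomFlatByMass} to this difference, use the trivial bound $\Fbf_{K}^{\circ}(\partial S) \leq \Mbf(S)$, and combine via the triangle inequality to obtain the constant $\max\{C,1\}$.
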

	
	\begin{proof} Let $T \in \Brm_{k}(K)$, and take $S \in \Nrm_{k+1}(K)$ with $\Fbf_{K}(T) = \Mbf(T-\partial S) + \Mbf(S)$. Notice that $T - \partial S \in \Brm_{k}(K)$, so we can use Proposition~\ref{pp:HomFlatByMass} to find $\tilde{C} > 0$ (depending only on $k$, $d$, and $K$) with
	\begin{align*}
		\Fbf_{K}^{\circ}(T - \partial S) \leq \tilde{C}\Mbf(T-\partial S).
	\end{align*}
	Since $\Fbf_{K}^{\circ}(\partial S) \leq \Mbf(S)$ by definition, we have
    \begin{align*}
		\Fbf_{K}^{\circ}(T) \leq \Fbf_{K}^{\circ}(T-\partial S) + \Fbf_{K}^{\circ}(\partial S) \leq \tilde{C}\Mbf(T - \partial S) + \Mbf(S) \leq C \Fbf_{K}(T)
	\end{align*}
	where $C = \max\{\tilde{C},1\}$.
	\end{proof}
	
	\subsection{Good Directions}\label{s:GoodDirections} Let $\Omega \subset \R^d$ be a Lipschitz domain. In this subsection, we shall construct a Lipschitz map $f: \overline{\Omega} \to \Omega$ which will allow us to strictly approximate normal currents $T \in \Nrm_{k}(\overline{\Omega})$ by normal currents with support in $\Omega$.
	
	Given $x_{0} \in \R^d$ and $v \in \mathbb{S}^{d-1}$, define
	\begin{align*}
		\Pi(x_{0},v) &:= \{x \in \R^d: (x-x_{0})\cdot v = 0\}, & \Pi^{\perp}(x_{0},v) &:= x_{0} + \R\cdot v.
	\end{align*}
	We shall use $\pi_{x_{0},v}$ and $\pi^{\perp}_{x_{0},v}$ to denote the corresponding orthogonal projections. Notice that, for $x \in \R^d$,
	\begin{align}\label{OrthProjDefns}
		\pi_{x_{0},v}(x) &= \pi_{v}(x-x_{0}) + x_{0}, & \pi^{\perp}_{x_{0},v}(x) &= \pi^{\perp}_{v}(x-x_{0})+x_{0}
	\end{align}
	where $\pi_{v} = \pi_{0,v}$ and $\pi^{\perp}_{v} = \pi_{0,v}^{\perp}$ are the (linear) orthogonal projections onto $(\R\cdot v)^{\perp}$ and $\R\cdot v$ respectively. We shall parameterise $\Pi^{\perp}(x_{0},v)$ with the map $\phi_{x_{0},v}: \R \to \R^d$ where $\phi_{x_{0},v}(t) = x_{0}+tv$ for $t \in \R$.
	
	The following definition was first introduced in~\cite[Definition 2.1]{BallZarnescu17}:
	
	\begin{definition}\label{dfn:GoodDirections} Let $\Omega \subset \R^d$ be open. Fix $\delta > 0$ and let $x_{0} \in \R^d$ with $\partial\Omega \cap B(x_{0},\delta) \neq \varnothing$. We say that $v \in \mathbb{S}^{d-1}$ is a \textbf{good direction} for $\Omega$ at $x_{0}$ (at scale $\delta$) if there is a continuous $g: \Pi(x_{0},v) \to \R$  that ``parameterises $\partial \Omega \cap B(x_{0},\delta)$'', that is,
	\begin{align*}
		\Omega \cap B(x_{0},\delta) &= \{x \in B(x_{0},\delta): g(\pi_{x_{0},v}(x)) < \phi^{-1}_{x_{0},v}(\pi^{\perp}_{x_{0},v}(x))\} \\
		\partial \Omega \cap B(x_{0},\delta) &= \{x \in B(x_{0},\delta): g(\pi_{x_{0},v}(x)) = \phi^{-1}_{x_{0},v}(\pi^{\perp}_{x_{0},v}(x))\}.
	\end{align*}
	\end{definition}
	\noindent Informally, the good directions for $\Omega$ at a given point are the directions in which $\partial\Omega$ looks like the graph of a continuous function. Of course, there may be no good directions for $\Omega$ at points on $\partial \Omega$, so we make the following definition:
	\begin{definition}\label{dfn:C0Bdry} An open set $\Omega \subset \R^d$ is said to have a \textbf{$\Crm^{0}$ boundary} if, for all $x_{0} \in \partial\Omega$, there is a $v \in \mathbb{S}^{d-1}$ and a $\delta > 0$ for which $v$ is a good direction for $\Omega$ at $x_{0}$ at scale $\delta$.
	\end{definition}
    In what follows, we say that $\Omega \subset \R^{d}$ is a \textbf{$\Crm^{0}$ domain} if $\Omega$ is open, bounded, connected, and has a $\Crm^{0}$ boundary in the sense of Definition \ref{dfn:C0Bdry}.
    
    Notice that, unlike domains with $\Crm^{k}$ boundaries for $k \geq 1$, an open set with a boundary covered by ``$\Crm^{0}$ charts'' may not have a $\Crm^{0}$ boundary in the sense of Definition \ref{dfn:C0Bdry} (to see this, consider the Koch snowflake). A similar phenomenon also occurs for domains with Lipschitz regularity (see \cite[Section 1.2.1]{Grisvard85}).
	
	The following lemma relates the notion of good directions to a crucial property that we shall need in the sequel.
	
	\begin{lemma}\label{lm:GoodDirectionsAreInward} Let $\Omega \subset \R^d$ be open and let $\delta > 0$. Let $x_{0} \in \R^d$ with $\partial\Omega \cap B(x_{0},\delta) \neq \varnothing$ and let $v \in \mathbb{S}^{d-1}$ be a good direction for $\Omega$ at $x_{0}$ at scale $\delta$. Fix $x \in B(x_{0},\delta) \cap \overline{\Omega}$. Then, for $t > 0$,
	\begin{align}\label{GoodDirectionsAreInward}
		x+tv \in B(x_{0},\delta) \Rightarrow x + tv \in \Omega.
	\end{align}
	In particular, if $x_{0} \in \overline{\Omega}$, then $x_{0}+tv \in \Omega$ for $t \in (0,\delta)$.
	\end{lemma}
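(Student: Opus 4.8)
The plan is to unwind Definition~\ref{dfn:GoodDirections} and exploit the fact that translating by $+tv$ leaves the projection onto the hyperplane $\Pi(x_{0},v)$ unchanged while shifting the height coordinate along $\Pi^{\perp}(x_{0},v)$ upward by exactly $t$. Concretely, I would first introduce the shorthand $h(y) := \phi^{-1}_{x_{0},v}(\pi^{\perp}_{x_{0},v}(y))$, which by \eqref{OrthProjDefns} and the definition of $\phi_{x_{0},v}$ equals $(y-x_{0})\cdot v$, and then record the two elementary identities $\pi_{x_{0},v}(x+tv) = \pi_{x_{0},v}(x)$ and $h(x+tv) = h(x)+t$, valid for all $t \in \R$. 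Both follow at once from \eqref{OrthProjDefns} together with $\pi_{v}(v)=0$ and $\pi^{\perp}_{v}(v)=v$.

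Next, for the fixed $x \in B(x_{0},\delta) \cap \overline{\Omega}$ I would split into the cases $x \in \Omega$ and $x \in \partial\Omega$, which exhaust $\overline{\Omega}$: in the first, Definition~\ref{dfn:GoodDirections} gives $g(\pi_{x_{0},v}(x)) < h(x)$; in the second, $g(\pi_{x_{0},v}(x)) = h(x)$; so in either case $g(\pi_{x_{0},v}(x)) \le h(x)$. Then, given $t>0$ with $x+tv \in B(x_{0},\delta)$, the two identities combine with this inequality to yield $g(\pi_{x_{0},v}(x+tv)) = g(\pi_{x_{0},v}(x)) \le h(x) < h(x)+t = h(x+tv)$. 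Since $x+tv$ lies in $B(x_{0},\delta)$, the characterisation of $\Omega \cap B(x_{0},\delta)$ in Definition~\ref{dfn:GoodDirections} forces $x+tv \in \Omega$, which is exactly \eqref{GoodDirectionsAreInward}. For the final assertion I would apply this with $x = x_{0}$, which lies in $B(x_{0},\delta) \cap \overline{\Omega}$ by hypothesis, noting that $|(x_{0}+tv)-x_{0}| = t < \delta$ for $t \in (0,\delta)$ keeps $x_{0}+tv$ inside the ball.

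I do not expect a genuine obstacle: the only care needed is in keeping track of which objects are affine (the hyperplane $\Pi(x_{0},v)$, the line $\Pi^{\perp}(x_{0},v)$, the parameterisation $\phi_{x_{0},v}$) versus linear, so that the cancellation $\pi_{x_{0},v}(x+tv) = \pi_{x_{0},v}(x)$ and the shift $h(x+tv) = h(x)+t$ come out with the right constants — and \eqref{OrthProjDefns} is set up precisely to make this bookkeeping routine. The geometric content is simply that a good direction points from $\overline{\Omega}$ into $\Omega$, since $\Omega$ sits strictly on the side of the graph of $g$ where $\phi^{-1}_{x_{0},v}$ exceeds $g$.
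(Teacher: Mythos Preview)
Your proposal is correct and follows essentially the same route as the paper's proof: both verify the identities $\pi_{x_{0},v}(x+tv)=\pi_{x_{0},v}(x)$ and $\phi^{-1}_{x_{0},v}(\pi^{\perp}_{x_{0},v}(x+tv))=\phi^{-1}_{x_{0},v}(\pi^{\perp}_{x_{0},v}(x))+t$ from \eqref{OrthProjDefns}, use $x\in\overline{\Omega}\cap B(x_{0},\delta)$ to get the weak inequality $g(\pi_{x_{0},v}(x))\le\phi^{-1}_{x_{0},v}(\pi^{\perp}_{x_{0},v}(x))$, and combine these to obtain the strict inequality characterising $\Omega\cap B(x_{0},\delta)$. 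Your explicit case split for $x\in\Omega$ versus $x\in\partial\Omega$ and the shorthand $h$ are minor presentational variations on what the paper does in a single line.
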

	\begin{proof} Fix $t > 0$. We can use~\eqref{OrthProjDefns} to show that
	\begin{align*}
		\pi_{x_{0},v}(x+tv) &= \pi_{x_{0},v}(x), & \pi^{\perp}_{x_{0},v}(x+tv) &= \pi^{\perp}_{x_{0},v}(x)+tv.
	\end{align*}
	It follows that $\phi^{-1}(\pi^{\perp}_{x_{0},v}(x+tv)) = \phi^{-1}_{x_{0},v}(\pi^{\perp}_{x_{0},v}(x))+t$. Let $g: \Pi(x_{0},v) \to \R$ be a continuous function which parameterises $\partial \Omega \cap B(x_{0},\delta)$. Since $x \in \overline{\Omega} \cap B(x_{0},\delta)$, we have $g(\pi_{x_{0},v}(x)) \leq \phi^{-1}_{x_{0},v}(\pi^{\perp}_{x_{0},v}(x))$. Combining these relations yields 
	\begin{align*}
		g(\pi_{x_{0},v}(x+tv)) < \phi^{-1}_{x_{0},v}(\pi^{\perp}_{x_{0},v}(x+tv)),
	\end{align*}
	which proves~\eqref{GoodDirectionsAreInward}.
	\end{proof}
	
	The following result is the content of~\cite[Proposition 2.1]{BallZarnescu17}, and we also give a proof of this using our notation in Appendix~\ref{ax:BZ}:
	
	\begin{proposition}\label{pp:InwardPointingVecFld} Let $\Omega \subset \R^d$ be open and bounded with $\Crm^{0}$ boundary. Then there is $\delta > 0$ and a smooth Lipschitz function $\tilde{n}: (\partial\Omega)_{\delta} \to \mathbb{S}^{d-1}$ such that, for $x \in (\partial\Omega)_{\delta}$, $\tilde{n}(x)$ is a good direction for $\Omega$ at $x$ at scale $\delta$.
	\end{proposition}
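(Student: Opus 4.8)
The plan is to stitch together, by means of a smooth partition of unity, the good directions that the $\Crm^{0}$-boundary hypothesis supplies locally. Two preliminary facts about good directions are needed. \emph{(i) Stability in the base point:} if $v$ is a good direction for $\Omega$ at $x_{0}$ at scale $\delta$, then $v$ is a good direction for $\Omega$ at $x_{0}$ at every smaller scale (just restrict the graph), and for $\rho:=\delta/4$ and $\delta':=\delta/2$ one has, for every $x\in B(x_{0},\rho)$, that $B(x,\delta')\subset B(x_{0},\delta)$ and (when $x_{0}\in\partial\Omega$) $\dist(x,\partial\Omega)<\delta'$; hence $\partial\Omega\cap B(x,\delta')$ is a nonempty piece of the graph over $\Pi(x_{0},v)$ in direction $v$, so it is also a graph over the parallel hyperplane $\Pi(x,v)$ in direction $v$ with $\Omega$ on the correct side, and thus $v$ is a good direction for $\Omega$ at $x$ at scale $\delta'$. \emph{(ii) Convexity:} if $v_{1},v_{2}$ are good directions for $\Omega$ at $x_{0}$ at scale $\delta$, then $v_{1}\cdot v_{2}>0$ and every normalised positive combination of $v_{1},v_{2}$ is again a good direction for $\Omega$ at $x_{0}$ at scale $\delta$; hence the set $G_{\delta}(x_{0})$ of such directions is geodesically convex, and as it never contains an antipodal pair $\{v,-v\}$ (by Lemma~\ref{lm:GoodDirectionsAreInward}, $v$ and $-v$ cannot both point into $\Omega$) it is a proper subset of $\mathbb{S}^{d-1}$ and so lies in an open hemisphere. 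Fact (ii) is due to \cite{BallZarnescu17}; I would reprove it in Appendix~\ref{ax:BZ}, the geometric content being that a hypersurface which is a continuous graph over two distinct directions is a continuous graph over every direction between them.

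With these in hand, the construction runs as follows. Using (i), for each $x_{0}\in\partial\Omega$ pick a good direction $v(x_{0})$ and numbers $\rho(x_{0}),\delta'(x_{0})>0$ so that $v(x_{0})$ is good for $\Omega$ at every point of $B(x_{0},\rho(x_{0}))$ at scale $\delta'(x_{0})$; extract from $\{B(x_{0},\rho(x_{0})/2)\}_{x_{0}\in\partial\Omega}$ a finite subcover $\{B(x_{i},\rho_{i}/2)\}_{i=1}^{N}$ of the compact set $\partial\Omega$, with data $v_{i},\delta_{i}'$, and fix $\delta>0$ so small that $\{x:\dist(x,\partial\Omega)\le\delta\}\subset\bigcup_{i}B(x_{i},\rho_{i}/2)$ and $\delta\le\min_{i}\delta_{i}'$. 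Then for every $x\in(\partial\Omega)_{\delta}$ the index set $I(x):=\{i:x\in B(x_{i},\rho_{i}/2)\}$ is nonempty, and by (i) each $v_{i}$ with $i\in I(x)$ is a good direction for $\Omega$ at $x$ at scale $\delta_{i}'\ge\delta$, hence also at scale $\delta$. Take a smooth partition of unity $\{\varphi_{i}\}_{i=1}^{N}$ on $(\partial\Omega)_{\delta}$ with $\supp\varphi_{i}\subset B(x_{i},\rho_{i}/2)$ and $\sum_{i}\varphi_{i}\equiv 1$ on $(\partial\Omega)_{\delta}$, and set
\[
 n(x):=\sum_{i=1}^{N}\varphi_{i}(x)\,v_{i},\qquad \tilde{n}(x):=\frac{n(x)}{\abs{n(x)}}\qquad\text{for }x\in(\partial\Omega)_{\delta}.
\]
For fixed $x$, every $v_{i}$ with $\varphi_{i}(x)>0$ satisfies $i\in I(x)$, so all such $v_{i}$ lie in the common open hemisphere $\{y\in\mathbb{S}^{d-1}:y\cdot e>0\}$ provided by (ii); therefore $n(x)\cdot e=\sum_{i}\varphi_{i}(x)(v_{i}\cdot e)>0$, so $n(x)\ne0$, and in fact (since only finitely many index sets $I(x)$ occur) $\abs{n(x)}$ is bounded below by a positive constant on $(\partial\Omega)_{\delta}$, whence $\tilde{n}\colon(\partial\Omega)_{\delta}\to\mathbb{S}^{d-1}$ is well-defined, smooth, and Lipschitz. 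Finally $n(x)$ lies in the convex cone generated by $\{v_{i}:i\in I(x)\}$, so $\tilde{n}(x)$ lies in the geodesic convex hull of that set, which by (ii) is contained in $G_{\delta}(x)$; thus $\tilde{n}(x)$ is a good direction for $\Omega$ at $x$ at scale $\delta$, completing the proof.

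The substantive part is Fact (ii): the stability statement (i), the compactness argument, and the partition-of-unity bookkeeping are routine, whereas the geodesic convexity of the set of good directions -- the assertion that a continuous graph over two directions is a continuous graph over every intermediate direction -- carries essentially all the geometric weight.
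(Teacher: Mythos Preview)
Your proposal is correct and follows essentially the same strategy as the paper's proof in Appendix~\ref{ax:BZ}: establish (i) stability of a good direction under small shifts of the base point, (ii) geodesic convexity of the set of good directions at a fixed point (the substantive Ball--Zarnescu input, recorded in the paper as Corollary~\ref{cr:GDsAreGeoConvex}), then glue finitely many locally constant good-direction fields via a smooth partition of unity and normalise. Two small slips worth fixing: the claim $v_{1}\cdot v_{2}>0$ in (ii) is false in general (for a flat boundary, any direction in the open upper hemisphere is good, so two nearly tangential good directions can have negative inner product); the correct statement, which is what the paper proves and what you actually use, is $v_{1}\neq -v_{2}$. Likewise ``proper subset of $\mathbb{S}^{d-1}$ and so lies in an open hemisphere'' is a non sequitur as written; the clean route---which your argument in effect takes---is that for the finitely many $v_{i}$ with $i\in I(x)$, Corollary~\ref{cr:GDsAreGeoConvex} gives $0\notin\co\{v_{i}:i\in I(x)\}$, and strict separation of $0$ from this compact convex set yields the open hemisphere.
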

	We can now construct the map described at the start of this subsection.
	
	\begin{proposition}\label{pp:ContractInto} Let $\Omega \subset \R^d$ be a $\Crm^{0}$ domain and take $\varepsilon > 0$. Then there is a Lipschitz $f_{\Omega,\varepsilon}: \overline{\Omega} \to \Omega$ with $\Lip(f_{\Omega,\varepsilon}-\mathrm{id}_{\overline{\Omega}}) \leq \varepsilon$ and $\Vert f_{\Omega,\varepsilon} - \mathrm{id}_{\overline{\Omega}} \Vert_{\infty} < \varepsilon$. Furthermore, the affine homotopy $H_{\Omega,\varepsilon}:[0,1]\times\overline{\Omega} \to \R^d$ from $\mathrm{id}_{\overline{\Omega}}$ to $f_{\Omega,\varepsilon}$ satisfies $H_{\Omega,\varepsilon}([0,1]\times\overline{\Omega}) \subset \overline{\Omega}$.
	\end{proposition}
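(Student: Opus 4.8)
The plan is to build $f_{\Omega,\varepsilon}$ by pushing $\overline{\Omega}$ a tiny amount inward along the good-direction vector field from Proposition~\ref{pp:InwardPointingVecFld}, after cutting it off so that it is defined on all of $\R^d$. First I would invoke Proposition~\ref{pp:InwardPointingVecFld} to fix $\delta > 0$ and a smooth Lipschitz map $\tilde{n}\colon (\partial\Omega)_{\delta} \to \mathbb{S}^{d-1}$ such that $\tilde{n}(x)$ is a good direction for $\Omega$ at $x$ at scale $\delta$ for every $x \in (\partial\Omega)_{\delta}$. Since $\partial\Omega$ is compact (because $\Omega$ is bounded), $\overline{(\partial\Omega)_{2\delta/3}}$ is a compact subset of $(\partial\Omega)_{\delta}$, so I may choose a smooth $\chi\colon \R^d \to [0,1]$ with $\chi \equiv 1$ on $(\partial\Omega)_{\delta/3}$ and $\supp\chi \subset (\partial\Omega)_{2\delta/3}$. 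Extending $\chi\tilde{n}$ by $0$ outside $\supp\chi$ yields a bounded Lipschitz map $\R^d \to \R^d$: on the compact set $\overline{(\partial\Omega)_{2\delta/3}}$ it is a product of bounded Lipschitz functions, and it vanishes on the complement, so $\Lip(\chi\tilde{n}) < \infty$. Then, for a small parameter $s > 0$, I set
\[
  f_{\Omega,\varepsilon}(x) := x + s\,\chi(x)\,\tilde{n}(x), \qquad x \in \overline{\Omega},
\]
so that $\Lip(f_{\Omega,\varepsilon}) \leq 1 + s\,\Lip(\chi\tilde{n})$ and $\Vert f_{\Omega,\varepsilon} - \mathrm{id}_{\overline{\Omega}}\Vert_{\infty} \leq s$ (using $0 \leq \chi \leq 1$ and $\vert \tilde{n}\vert = 1$); choosing $s < \min\{\delta,\ \varepsilon,\ \varepsilon/(1+\Lip(\chi\tilde{n}))\}$ gives the desired bounds $\Lip(f_{\Omega,\varepsilon}) \leq 1 + \varepsilon$ and $\Vert f_{\Omega,\varepsilon} - \mathrm{id}_{\overline{\Omega}}\Vert_{\infty} < \varepsilon$.

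Next I would verify the homotopy property, which is where Lemma~\ref{lm:GoodDirectionsAreInward} does all the work. The affine homotopy is $H_{\Omega,\varepsilon}(t,x) = x + t\,s\,\chi(x)\,\tilde{n}(x)$; fix $x \in \overline{\Omega}$, $t \in [0,1]$, and put $\tau := t\,s\,\chi(x) \in [0,s]$. If $\chi(x) = 0$ then $H_{\Omega,\varepsilon}(t,x) = x \in \overline{\Omega}$. If $\chi(x) > 0$, then $\dist(x,\partial\Omega) < 2\delta/3 < \delta$, so $\partial\Omega \cap B(x,\delta) \neq \varnothing$ and $\tilde{n}(x)$ is a good direction for $\Omega$ at $x$ at scale $\delta$; trivially $x \in B(x,\delta)\cap\overline{\Omega}$, and $\tau \leq s < \delta$ guarantees $x + \tau\tilde{n}(x) \in B(x,\delta)$, so Lemma~\ref{lm:GoodDirectionsAreInward} yields $x + \tau\tilde{n}(x) \in \Omega$ when $\tau > 0$ (and $H_{\Omega,\varepsilon}(t,x) = x \in \overline{\Omega}$ when $\tau = 0$). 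In every case $H_{\Omega,\varepsilon}(t,x) \in \overline{\Omega}$, so $H_{\Omega,\varepsilon}([0,1]\times\overline{\Omega}) \subset \overline{\Omega}$.

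Finally, to see that $f_{\Omega,\varepsilon} = H_{\Omega,\varepsilon}(1,\cdot)$ actually lands in the open set $\Omega$, I distinguish the same two cases at $t = 1$. If $\chi(x) > 0$, then $\tau = s\,\chi(x) \in (0,\delta)$ and the argument above gives $f_{\Omega,\varepsilon}(x) \in \Omega$. If $\chi(x) = 0$, then since $\chi \equiv 1$ on $(\partial\Omega)_{\delta/3}$ we must have $\dist(x,\partial\Omega) \geq \delta/3 > 0$, hence $x \notin \partial\Omega$; combined with $x \in \overline{\Omega}$ this forces $x \in \Omega$, and $f_{\Omega,\varepsilon}(x) = x \in \Omega$. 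This completes the construction. There is no serious obstacle once Proposition~\ref{pp:InwardPointingVecFld} and Lemma~\ref{lm:GoodDirectionsAreInward} are in hand; the only point requiring a little care is checking that the cut-off field $\chi\tilde{n}$ is globally Lipschitz (so that its Lipschitz constant can be absorbed into the small factor $s$), and keeping track of the elementary fact that $x \in \overline{\Omega}$ with $\dist(x,\partial\Omega) > 0$ implies $x \in \Omega$.
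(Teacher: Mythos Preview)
Your proposal is correct and follows essentially the same approach as the paper: push $\overline{\Omega}$ inward by a small multiple of the cut-off good-direction field from Proposition~\ref{pp:InwardPointingVecFld}, then use Lemma~\ref{lm:GoodDirectionsAreInward} to check both the homotopy containment and that the endpoint lands in $\Omega$. The only cosmetic difference is that the paper writes the cut-off as $\varphi(\dist(x,\partial\Omega))$ for a one-variable bump $\varphi$, whereas you use a spatial cut-off $\chi$ directly; the estimates and the role of the small parameter are identical.
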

	
	\begin{proof} Since $\Omega \subset \R^d$ is open and bounded with $\Crm^{0}$ boundary, we can apply Proposition~\ref{pp:InwardPointingVecFld} to find $\delta > 0$ and $\tilde{n}: (\partial \Omega)_{\delta} \to \mathbb{S}^{d-1}$ such that, for $x \in (\partial\Omega)_{\delta}$, $\tilde{n}(x)$ is a good direction for $\Omega$ at $x$ at scale $\delta$. Let $\varphi: \R^{+} \to [0,1]$ be a smooth cut-off function with $\varphi(t)=1$ for $t \in [0,\delta/2]$ and $\varphi(t)=0$ for $t \geq 3\delta/4$. Fix $\eta \in (0,\min\{\varepsilon,\delta\})$ (to be chosen later). We define $g_{\Omega,\varepsilon}: \overline{\Omega} \to \R^{d}$ as follows:
	\begin{align*}
	   g_{\Omega,\varepsilon}(x) := \left\{\begin{array}{cl}
       \eta\cdot\varphi(\mathrm{dist}(x,\partial\Omega))\cdot\tilde{n}(x) & \text{if}\hspace{0.2em} x \in \overline{\Omega}\backslash\Omega_{-\delta} \\
       0 & \text{if}\hspace{0.2em} x \in \Omega_{-\delta}.
	   \end{array}\right.
	\end{align*}
    Here, for a set $A \subset \R^{d}$ and $\delta > 0$, we have defined $A_{-\delta} := \{x \in A: B(x,\delta) \subset A\}$.
	Set $f_{\Omega,\varepsilon} := g_{\Omega,\varepsilon} + \mathrm{id}_{\overline{\Omega}}$. First notice that, since $\eta < \varepsilon$, $\vert \vert g_{\Omega,\varepsilon} \vert \vert_{\infty} < \varepsilon$. We also have $f_{\Omega,\varepsilon}(x) \in \Omega$ for $x \in \overline{\Omega}$ by Lemma~\ref{lm:GoodDirectionsAreInward}. Similarly, we have $H_{\Omega,\varepsilon}([0,1]\times\overline{\Omega}) \subset \overline{\Omega}$. It follows that $\overline{\Omega}$ is path-connected and one can therefore show that
	\begin{align*}
		\Lip(g_{\Omega,\varepsilon}) \leq \eta ( \Lip(\varphi)\cdot\Lip(\mathrm{dist}(\cdot,\partial\Omega)) + \Lip(\tilde{n})).
	\end{align*}
	The result follows by choosing $\eta > 0$ small enough that $\Lip(g_{\Omega,\varepsilon}) < \varepsilon$.
	\end{proof}
	
	\section{Space-Time Normal Currents}\label{s:STNormal}
	In this section, we discuss some noteworthy features of space-time normal currents, that is, normal currents in $\R^{1+d} \cong \R \times \R^{d}$, and generalise the results of~\cite{Rindler23} to this setting. 
    
    Recall from the introduction that the orthogonal temporal projection is written as $\tbf \colon \mathbb R^{1+d} \to \mathbb R \times \{0\}^d \cong \mathbb R$ where $\tbf(t,x) := t$ for $(t,x) \in \R^{1+d}$, whilst the orthogonal spatial projection is written as $\pbf \colon \mathbb R^{1+d} \to \{0\} \times \mathbb R^d \cong \mathbb R^d$ where $\pbf(t,x) := x$ for $(t,x) \in \R^{1+d}$. 
	
	\subsection{Slices of Space-Time Normal Currents}\label{s:Slicing} 
	In the sequel we shall need to consider ``slices'' of space-time normal currents by the projection map $\tbf$. The general theory of slicing presented in~\cite[Section 4.3]{Federer69book} considers the slice of a $(k+m)$-flat chain in $\R^{m+d}$ by a Lipschitz map $f: \R^{m+d} \to \R^{m}$. Since we need slightly adjusted statements and since our particular case allows one to simplify the arguments, we will give complete proofs.
	
	For a positive Borel measure $\mu$ on $\R$ and $t \in \R$, we shall use $\Theta^{1*}(\mu,t)$ to denote the \textbf{upper $1$-density} of $\mu$ at $t$, that is,
	\begin{align*}
		\Theta^{1*}(\mu,t) := \limsup_{r \downarrow 0} \frac{\mu([t-r,t+r])}{2r}.
	\end{align*}
	
	\begin{proposition}\label{pp:SlicesExist} Let $S \in \Nrm_{1+k}(\R^{1+d})$. For $\mathscr{L}^{1}$-a.e.\ $t \in \R$, the \textbf{slice} 
		\begin{align*}
			S\vert_{t} \in \Nrm_{k}(\R^{1+d}) 
		\end{align*}
		exists where, for $\omega \in \Dcal^{k}(\R^{1+d})$,
		\begin{align*}
			S\vert_{t}(\omega) := \lim_{r \downarrow 0} \frac{1}{2r}S \mvert \tbf^{*}(\boldsymbol{1}_{[t-r,t+r]}\dd t)(\omega)
            = \lim_{r \downarrow 0} \frac{1}{2r} \int_{[t-r,t+r] \times \R^d} \dprb{\vec{S}, \di t \wedge \omega} \dd \Vert S \Vert
		\end{align*}
		Furthermore, for all $t \in \R$ such that $S\vert_{t}$ exists, $(\partial S)\vert_{t}$ exists and we have the following \textbf{boundary formula}:
		\begin{align}\label{BdryFormula}
			(\partial S)\vert_{t} = -\partial(S\vert_{t}).
		\end{align}
	\end{proposition}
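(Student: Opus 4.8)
The plan is to establish the existence of the slice via a differentiation-of-measures argument and then obtain the boundary formula by a direct computation using the definition of the boundary operator on currents. First I would fix $S \in \Nrm_{1+k}(\R^{1+d})$ and, for each $r > 0$ and $t \in \R$, consider the current $S \mvert \tbf^{*}(\boldsymbol{1}_{[t-r,t+r]}\,\di t)$, which acts on $\omega \in \Dcal^{k}(\R^{1+d})$ by integrating $\dprb{\vec{S},\di t \wedge \omega}$ over $[t-r,t+r]\times\R^{d}$ against $\Vert S \Vert$. Dividing by $2r$ and letting $r \downarrow 0$, the existence of the limit for $\mathscr{L}^{1}$-a.e.\ $t$ is exactly a Lebesgue differentiation statement: writing $\mu := \tbf_{\#}\Vert S \Vert$, which is a finite Borel measure on $\R$ (since $\Mbf(S) < \infty$), the push-forward measure is $\mathscr{L}^{1}$-a.e.\ differentiable, and for a fixed $\omega$ the map $t \mapsto S\vert_{t}(\omega)$ is the Radon--Nikodym derivative of the signed measure $B \mapsto S\mvert \tbf^{*}(\boldsymbol{1}_{B}\,\di t)(\omega)$ with respect to $\mathscr{L}^{1}$. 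Running this over a countable dense family of forms $\omega$ (e.g.\ with polynomial or smooth rational coefficients, dense in $\Dcal^{k}$ in the appropriate topology) and noting the uniform bound $\abs{S\vert_{t}(\omega)} \le \Theta^{1*}(\mu,t)\,\Vert \omega \Vert_{\infty}$, one gets, for $\mathscr{L}^{1}$-a.e.\ $t$, a linear functional of bounded comass, hence a well-defined current $S\vert_{t} \in \Dcal_{k}(\R^{d})$; its mass is controlled by $\Theta^{1*}(\mu,t)$, which is finite $\mathscr{L}^{1}$-a.e., so $S\vert_{t}$ has finite mass.

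The same construction applies verbatim to $\partial S \in \Nrm_{k}(\R^{1+d})$, which is again a normal (in particular finite-mass, compact-support) current, yielding a set of full measure on which $(\partial S)\vert_{t}$ exists. To prove the boundary formula, I would work on the (full-measure) set of $t$ where both $S\vert_{t}$ and $(\partial S)\vert_{t}$ exist, and test against $\omega \in \Dcal^{k-1}(\R^{1+d})$. On the one hand, $(\partial S)\vert_{t}(\omega) = \lim_{r\downarrow 0}\frac{1}{2r}(\partial S)\mvert\tbf^{*}(\boldsymbol{1}_{[t-r,t+r]}\,\di t)(\omega)$, and by definition of the boundary this equals $\lim_{r\downarrow 0}\frac{1}{2r}\int_{[t-r,t+r]\times\R^{d}}\dprb{\vec{S},\di t \wedge \di\omega}\dd\Vert S\Vert$, i.e.\ $S\vert_{t}(\di\omega)$. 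On the other hand, $\partial(S\vert_{t})(\omega) = S\vert_{t}(\di\omega)$ as well, so naively the two sides would be \emph{equal}, not negatives — the sign discrepancy must come from the wedge ordering. The point is that $\partial S$ acts on $\eta \in \Dcal^{k}$ by $(\partial S)(\eta) = S(\di\eta)$, and we apply this with $\eta = \boldsymbol{1}_{[t-r,t+r]}(\tbf)\wedge\omega$ in the restriction; since $\di(\tbf^{*}(\boldsymbol{1}_{B}\,\di t)\wedge\omega)$ produces $\tbf^{*}(\boldsymbol{1}_{B}\,\di t)\wedge\di\omega$ with a sign $(-1)$ coming from moving $\di$ past the degree-$1$ factor $\di t$, the limit of $(\partial S)\vert_{t}(\omega)$ picks up exactly this $(-1)$ relative to $S\vert_{t}(\di\omega)$. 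Carrying out this bookkeeping carefully gives $(\partial S)\vert_{t}(\omega) = -S\vert_{t}(\di\omega) = -\partial(S\vert_{t})(\omega)$ for all test forms, which is \eqref{BdryFormula}; along the way it also shows $\Mbf(\partial(S\vert_{t})) = \Mbf((\partial S)\vert_{t}) < \infty$, so $S\vert_{t} \in \Nrm_{k}(\R^{d})$ (compact support being inherited from $\supp S$).

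The main obstacle I expect is purely bookkeeping rather than conceptual: one has to be careful that the "slice" is genuinely independent of the chosen approximation (i.e.\ that the pointwise $r\downarrow 0$ limit, taken over a countable dense set of forms, extends to \emph{all} forms on a single full-measure set of $t$), and that the interchange of the limit in $r$ with the exterior derivative $\di$ in the boundary computation is legitimate — both of which follow from the uniform comass bound by $\Theta^{1*}(\mu,t)$ together with the fact that $\Theta^{1*}(\tbf_{\#}\Vert S\Vert, \cdot)$ and $\Theta^{1*}(\tbf_{\#}\Vert\partial S\Vert,\cdot)$ are finite a.e. A secondary subtlety is tracking the correct sign in the wedge product $\di t \wedge \omega$ versus $\omega \wedge \di t$ conventions; fixing the convention once (matching $S\mvert\tbf^{*}(\boldsymbol{1}_{B}\,\di t)$ as written in the statement) and propagating it through the Leibniz rule for $\di$ yields the stated sign in \eqref{BdryFormula}.
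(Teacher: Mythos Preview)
Your approach is correct and essentially the same as the paper's: Lebesgue differentiation applied to $t \mapsto (S \mvert \tbf^{*}(\boldsymbol{1}_{[t-r,t+r]}\,\di t))(\omega)$ for each fixed $\omega$, extension to all forms via a countable dense set and the uniform comass bound by $\Theta^{1*}(\tbf_{\#}\Vert S\Vert,t)$, and the boundary formula via the sign picked up when $\di$ moves past the degree-$1$ factor $\di t$.

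One small point worth tightening: the statement asserts that $(\partial S)\vert_{t}$ exists \emph{whenever} $S\vert_{t}$ exists, not merely on the intersection of the two separately-obtained full-measure sets. Your computation actually gives this stronger conclusion, but you should phrase it that way: the identity
\[
(\partial S)\mvert \tbf^{*}(\boldsymbol{1}_{[t-r,t+r]}\,\di t) \;=\; -\,\partial\bigl(S\mvert \tbf^{*}(\boldsymbol{1}_{[t-r,t+r]}\,\di t)\bigr)
\]
holds for every finite $r>0$ (this is exactly your Leibniz bookkeeping, done before passing to the limit). Hence if $S\vert_{t}$ exists, the right-hand side converges weakly* as $r\downarrow 0$, and therefore so does the left-hand side, which is precisely the definition of $(\partial S)\vert_{t}$. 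The paper argues in exactly this order; you have all the pieces, just assemble them so that existence of $(\partial S)\vert_{t}$ is a \emph{consequence} of the pre-limit identity rather than a separate hypothesis.
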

	\begin{proof} Fix $\omega \in \Dcal^{k}(\R^{1+d})$. For $t \in \R$ and $r > 0$, we compute as follows:
		\begin{equation}\label{SliceWeakRep}
        \begin{aligned}
			S \mvert \tbf^{*}(\boldsymbol{1}_{[t-r,t+r]} \dd t)(\omega) &= S(\tbf^{*}(\boldsymbol{1}_{[t-r,t+r]} \dd t) \wedge \omega) \\
            &= (-1)^{k} S(\omega \wedge \tbf^{*}(\boldsymbol{1}_{[t-r,t+r]} \dd t)) \\
            &= (-1)^{k}\tbf_{*}(S \mvert \omega)(\boldsymbol{1}_{[t-r,t+r]} \dd t).
		\end{aligned}
        \end{equation}
		Since $\tbf_{*}(S \mvert \omega) \in \Nrm_{1}(\R)$, there is (see~\cite[4.1.18]{Federer69book}) an $\mathscr{L}^{1}$-integrable $\xi_{\omega}: \R \to \R$ such that $\tbf_{*}(S \mvert \omega) = (\xi_{\omega} e_{0})\mathscr{L}^{1}$. It follows from Lebesgue's differentiation theorem that, for $\mathscr{L}^{1}$-a.e.\ $t \in \R$, $S\vert_{t}(\omega)$ exists and
		\begin{align}\label{SliceFormula}
			S\vert_{t}(\omega) = (-1)^{k}\xi_{\omega}(t).
		\end{align}
		Now notice that, for $\omega \in \Dcal^{k}(\R^{1+d})$, $t \in \R$, and $r > 0$, we have
		\begin{equation}\label{SliceEst1}
        \begin{aligned}
			\frac{1}{2r}\vert S \mvert \tbf^{*}(\boldsymbol{1}_{[t-r,t+r]} \dd t)(\omega)\vert &\leq \frac{1}{2r} \int_{[t-r,t+r] \times \R^d} \vert \omega \vert \dd \Vert S \Vert \\ 
            &\leq \Vert \omega \Vert_{\infty} \frac{\tbf_{\#}\|S\|([t-r,t+r])}{2r}.
		\end{aligned}
        \end{equation}
		Take a countable dense subset $\{\omega_{j}\} \subset \Dcal^{k}(\R^{1+d})$ (with respect to $\Vert \cdot \Vert_{\infty}$). Then there is $N \subset \R$ with $\mathscr{L}^{1}(N) = 0$ such that $S\vert_{t}(\omega_{j}) = (-1)^{k}\xi_{\omega_{j}}(t)$ for $t \in \R\backslash N$ and $j \in \N$. Without loss of generality, we have $\Theta^{1*}(\tbf_{\#}\Vert S \Vert,t) < \infty$ for $t \in \R\backslash N$. Fix $t \in \R\backslash N$, take $\omega \in \Dcal^{k}(\R^{1+d})$ and let $\varepsilon > 0$. Then there is $j \in \N$ with $\Vert \omega - \omega_{j} \Vert_{\infty} < \varepsilon$. It follows from~\eqref{SliceEst1} that
		\begin{align*}
			\bigg\vert \limsup_{r \downarrow 0} \frac{1}{2r} S &\mvert \tbf^{*}(\boldsymbol{1}_{[t-r,t+r]} \dd t)(\omega) \\
			&- \liminf_{r \downarrow 0} \frac{1}{2r} S \mvert \tbf^{*}(\boldsymbol{1}_{[t-r,t+r]} \dd t)(\omega) \bigg\vert \leq 2\varepsilon \cdot \Theta^{1*}(\tbf_{\#}\Vert S \Vert,t).
		\end{align*}
		Since $\varepsilon > 0$ was arbitrary, we deduce that $S\vert_{t}(\omega)$ exists. Notice that 
		\begin{align*}
			\frac{1}{2r} S \mvert \tbf^{*}(\boldsymbol{1}_{[t-r,t+r]} \dd t) \wkto S\vert_{t}, 
		\end{align*}
		so from the weak* lower semicontinuity of the mass and~\eqref{SliceEst1} we obtain 
		\begin{align}\label{SliceMassEst1}
			\Mbf(S\vert_{t}) \leq \Theta^{1*}(\tbf_{\#}\Vert S \Vert,t).
		\end{align}
		Now, for $\omega \in \Dcal^{k-1}(\R^{1+d})$, notice that $S \mvert \omega \in \Dcal_{2}(\R^{1+d})$ and so $\tbf_*(S \mvert \omega) = 0$; it follows that $\tbf_{*}(\partial S \mvert \omega) = \tbf_{*}(S \mvert \mathrm{d}\omega)$. For $t \in \R$ and $r > 0$, we can work as in~\eqref{SliceWeakRep} to obtain
		\begin{align}\label{SlicePreBdry}
			\frac{1}{2r}\partial S \mvert \tbf^{*}(\boldsymbol{1}_{[t-r,t+r]}\dd t) = -\frac{1}{2r}\partial(S \mvert \tbf^{*}(\boldsymbol{1}_{[t-r,t+r]} \dd t)).
		\end{align}
		Take $t \in \R$ and suppose that $S\vert_{t}$ exists. We can then take $r \downarrow 0$ in~\eqref{SlicePreBdry} to deduce that $(\partial S)\vert_{t}$ exists and that the boundary formula holds. We can now work as above with $\partial S$ in place of $S$ and deduce that $\Mbf(\partial(S\vert_{t})) \leq \Theta^{1*}(\tbf_{\#}\Vert \partial S\Vert,t)$. It follows that $S\vert_{t} \in \Nrm_{k}(\{t\}\times\R^d)$ for $\mathscr{L}^{1}$-a.e.\ $t \in \R$.
	\end{proof}
	Fix $S \in \Nrm_{1+k}(\R^{1+d})$. Notice that we can estimate more carefully than in~\eqref{SliceEst1} and obtain
	\begin{align}\label{SliceEst2}
		\frac{1}{2r}\vert S \mvert \tbf^{*}(\boldsymbol{1}_{[t-r,t+r]} \dd t)(\omega) \vert \leq \frac{1}{2r}\int_{[t-r,t+r] \times \R^d} \vert \omega \vert \dd \Vert S \mvert \mathrm{d}t \Vert.
	\end{align}
	Let $t \in \R$ be such that $S\vert_{t}$ exists. Taking $r \downarrow 0$ in~\eqref{SliceEst2} yields
	\begin{align}\label{SliceEst3}
		\Mbf(S\vert_{t}) \leq \Theta^{1*}(\tbf_{\#}\Vert S \mvert \mathrm{d}t \Vert,t).
	\end{align}
	The following \textbf{coarea formula} demonstrates that this estimate is in some sense optimal.
	
	\begin{proposition}\label{pp:Coarea} Let $S \in \Nrm_{1+k}(\R^{1+d})$ and let $\phi: \R \to \R$ be a bounded Borel function. Then
	\begin{align*}
		\int \vert \phi(t) \vert \, \Mbf(S\vert_{t}) \dd t = \int \vert \phi \circ \tbf \vert \hspace{0.2em} \Vert \vec{S} \mvert \mathrm{d}t \Vert \dd \Vert S \Vert.
	\end{align*}
	\end{proposition}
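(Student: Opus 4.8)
The plan is to reduce the coarea formula to the equality of two Borel measures on $\R$ and then to verify that equality on intervals. Set
\[
\lambda_{1} := \Mbf(S|_{\cdot})\,\mathscr{L}^{1}, \qquad \lambda_{2} := \tbf_{\#}\Vert S \mvert \mathrm{d}t \Vert ,
\]
which are finite Borel measures on $\R$: the map $t \mapsto \Mbf(S|_{t})$ is measurable (as usual in slicing theory, e.g.\ as a countable supremum of the measurable maps $t \mapsto S|_{t}(\omega_{j})$ over a suitable dense family $(\omega_{j})_{j}$), and $\lambda_{1}$ is finite by~\eqref{SliceEst3}. Since $\Vert S \mvert \mathrm{d}t \Vert = \Vert \vec{S} \mvert \mathrm{d}t \Vert\,\Vert S \Vert$, the asserted identity is precisely $\int |\phi|\dd\lambda_{1} = \int |\phi|\dd\lambda_{2}$ for every bounded Borel $\phi$; applying this to $|\phi|$, it thus suffices to prove $\lambda_{1} = \lambda_{2}$, and as both are Radon it is enough to show $\lambda_{1}(I) = \lambda_{2}(I)$ for every bounded interval $I \subset \R$. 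The key elementary ingredient is the ``Fubini identity''
\[
\bigl(S \mvert \tbf^{*}(\boldsymbol{1}_{I}\,\mathrm{d}t)\bigr)(\omega) = \int_{I} S|_{t}(\omega) \dd t \qquad \text{for all } \omega \in \Dcal^{k}(\R^{1+d}),
\]
which is obtained exactly as in the proof of Proposition~\ref{pp:SlicesExist}: the computation~\eqref{SliceWeakRep} (with $[t-r,t+r]$ replaced by $I$) gives $\bigl(S\mvert\tbf^{*}(\boldsymbol{1}_{I}\mathrm{d}t)\bigr)(\omega) = (-1)^{k}\tbf_{*}(S\mvert\omega)(\boldsymbol{1}_{I}\mathrm{d}t)$, and since the normal current $\tbf_{*}(S\mvert\omega) = (\xi_{\omega}e_{0})\mathscr{L}^{1}$ is absolutely continuous and hence acts on the bounded Borel form $\boldsymbol{1}_{I}\mathrm{d}t$ by $\int_{I}\xi_{\omega}\dd\mathscr{L}^{1}$, the identity follows from~\eqref{SliceFormula}. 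Moreover $S\mvert\tbf^{*}(\boldsymbol{1}_{I}\mathrm{d}t) = \boldsymbol{1}_{\tbf^{-1}(I)}\cdot(S\mvert\mathrm{d}t)$, whence $\Vert S\mvert\tbf^{*}(\boldsymbol{1}_{I}\mathrm{d}t)\Vert = \boldsymbol{1}_{\tbf^{-1}(I)}\,\Vert S\mvert\mathrm{d}t\Vert$ and therefore $\Mbf\bigl(S\mvert\tbf^{*}(\boldsymbol{1}_{I}\mathrm{d}t)\bigr) = \Vert S\mvert\mathrm{d}t\Vert(\tbf^{-1}(I)) = \lambda_{2}(I)$.

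Granting this, both inequalities are short. For $\lambda_{2}(I) \le \lambda_{1}(I)$, recall that $\Mbf$ of a finite-mass current is the supremum of its evaluations on forms of comass at most $1$; hence, using the Fubini identity and $S|_{t}(\omega) \le \Mbf(S|_{t})$ pointwise,
\[
\lambda_{2}(I) = \Mbf\bigl(S\mvert\tbf^{*}(\boldsymbol{1}_{I}\mathrm{d}t)\bigr) = \sup_{\Vert\omega\Vert\le 1}\int_{I} S|_{t}(\omega)\dd t \le \int_{I}\Mbf(S|_{t})\dd t = \lambda_{1}(I).
\]
For the reverse, estimate~\eqref{SliceEst3} reads $\Mbf(S|_{t}) \le \Theta^{1*}(\lambda_{2},t)$ for $\mathscr{L}^{1}$-a.e.\ $t$; by the Lebesgue--Besicovitch differentiation theorem, $\Theta^{1*}(\lambda_{2},t)$ equals the Radon--Nikodym density of the absolutely continuous part $(\lambda_{2})_{\mathrm{ac}}$ of $\lambda_{2}$ for $\mathscr{L}^{1}$-a.e.\ $t$, so
\[
\lambda_{1}(I) = \int_{I}\Mbf(S|_{t})\dd t \le \int_{I}\Theta^{1*}(\lambda_{2},t)\dd t = (\lambda_{2})_{\mathrm{ac}}(I) \le \lambda_{2}(I).
\]
Combining the two inequalities gives $\lambda_{1}(I) = \lambda_{2}(I)$ for every bounded interval, hence $\lambda_{1} = \lambda_{2}$, and the coarea formula follows as explained above.

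The only step requiring genuine care is the Fubini identity, and even that is essentially already contained in the proof of Proposition~\ref{pp:SlicesExist}: one evaluates the (absolutely continuous) normal $1$-current $\tbf_{*}(S\mvert\omega)$ on a Borel indicator in place of a smooth bump and applies Lebesgue differentiation. The remaining points—measurability of $t\mapsto\Mbf(S|_{t})$ and the inequality $\sup_{\omega}\int_{I}\,\cdot\le\int_{I}\sup_{\omega}\,\cdot$—are routine, so I do not expect any serious obstacle; the bulk of the content is the bookkeeping needed to identify the three incarnations of the measure $\lambda_2$ (as $\tbf_{\#}\Vert S\mvert\mathrm dt\Vert$, as $\int|\phi\circ\tbf|\Vert\vec S\mvert\mathrm dt\Vert\dd\Vert S\Vert$, and as $I\mapsto\Mbf(S\mvert\tbf^{*}(\boldsymbol1_{I}\mathrm dt))$).
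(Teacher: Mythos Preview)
Your argument is correct. The overall architecture is the same as the paper's—both directions rest on the Fubini-type identity $(S\mvert\tbf^{*}(\psi\,\mathrm{d}t))(\omega)=\int \psi(t)\,S|_{t}(\omega)\dd t$ together with~\eqref{SliceEst3} and Besicovitch differentiation—but your packaging is different in one useful way. For the inequality $\int |\phi\circ\tbf|\,\Vert\vec S\mvert\mathrm dt\Vert\dd\Vert S\Vert \le \int |\phi|\,\Mbf(S|_{t})\dd t$, the paper introduces an auxiliary result (Lemma~\ref{lm:CoareaApprox}) producing a single smooth $\omega$ with $\Vert\omega\Vert_{\infty}\le 1$ that $\varepsilon$-saturates $\Vert\vec S\mvert\mathrm dt\Vert$ pointwise, then evaluates $(S\mvert|\phi\circ\tbf|\mathrm dt)(\omega)$ and uses lower/upper integrals to close the argument. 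You instead reduce to $\lambda_{1}=\lambda_{2}$ on intervals and obtain $\lambda_{2}(I)\le\lambda_{1}(I)$ directly from the definition of mass as a supremum, which sidesteps the approximation lemma entirely and also avoids the upper/lower integral bookkeeping. The paper's route, in exchange, works for arbitrary bounded Borel weights $\phi$ in one stroke rather than via a $\pi$-system reduction. Both are sound; yours is the more economical of the two here.
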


    To prove Proposition \ref{pp:Coarea}, we shall need the following elementary result:

    \begin{lemma}\label{lm:CoareaApprox} Let $\mu$ be a finite Borel measure on $\R^{d}$ and take $M,\varepsilon > 0$. Let $\xi: \mathbb{R}^{d} \to \Wedge_k\mathbb{R}^{d}$ be a $\mu$-measurable $k$-vector field with $\Vert \xi(x) \Vert \leq M$ for $\mu$-a.e.\ $x \in \R^{d}$. Then there is a $\mu$-measurable $G \subset \R^{d}$ with $\mu(\R^{d}\backslash G) < \varepsilon$ and a smooth $k$-covector field $\omega: \mathbb{R}^{d} \to \Wedge^k\mathbb{R}^{d}$ with $\Vert \omega \Vert_{\infty} \leq 1$ and 
    \begin{align*}
        \langle \xi(x),\omega(x)\rangle &\geq \Vert \xi(x) \Vert - \varepsilon \hspace{2em} \text{for $x \in G$}.
    \end{align*}
    \end{lemma}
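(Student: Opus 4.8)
The plan is to first construct a Borel covector field that already realizes the estimate pointwise everywhere with the sharper constant $\varepsilon/2$, and then to regularize it to a smooth field, exploiting the convexity of the comass ball to keep the norm under control throughout.

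\emph{Step 1: a Borel near-optimizer.} Since the mass norm on $\Wedge_k\R^d$ and the comass norm on $\Wedge^k\R^d$ are dual to one another, to every $w\in\Wedge_k\R^d$ there corresponds $\alpha_w\in\Wedge^k\R^d$ with $\Vert\alpha_w\Vert\le1$ and $\langle w,\alpha_w\rangle=\Vert w\Vert$. The function $(w,\alpha)\mapsto\langle w,\alpha\rangle-\Vert w\Vert$ is continuous and nonpositive and vanishes at $(w,\alpha_w)$, so there is an open neighbourhood $U_w$ of $w$ in $\Wedge_k\R^d$ with $\langle v,\alpha_w\rangle\ge\Vert v\Vert-\varepsilon/2$ for all $v\in U_w$. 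The family $\{U_w\}$ covers the second-countable space $\Wedge_k\R^d$, so we may extract a countable subcover $\{U_{w_i}\}_{i\in\N}$. Then $V_i:=\xi^{-1}\bigl(U_{w_i}\setminus\bigcup_{j<i}U_{w_j}\bigr)$ defines a countable Borel partition of $\R^d$, and $\omega_0:=\sum_i\alpha_{w_i}\,\boldsymbol{1}_{V_i}$ is a Borel covector field with $\Vert\omega_0\Vert_\infty\le1$ and $\langle\xi(x),\omega_0(x)\rangle\ge\Vert\xi(x)\Vert-\varepsilon/2$ for every $x\in\R^d$.

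\emph{Step 2: regularization.} It remains to find a \emph{smooth} $\omega$ with $\Vert\omega\Vert_\infty\le1$ that, at $\mu$-a.e.\ $x$, lies in the closed, convex, nonempty set $\Acal(x):=\{\alpha\in\Wedge^k\R^d:\Vert\alpha\Vert\le1,\ \langle\xi(x),\alpha\rangle\ge\Vert\xi(x)\Vert-\varepsilon\}$. I would do this by invoking Lusin's theorem to reduce to the case where $\xi$ is continuous on a closed set $F$ with $\mu(\R^d\setminus F)$ as small as desired, exhausting $\R^d$ by an increasing sequence of such sets. On a given $F$ one first replaces $\xi$ by a globally smooth field $\hat\xi$ with $\Vert\hat\xi-\xi\Vert<\varepsilon/8$ on $F$, then sets $\omega:=\sum_i\chi_i(\hat\xi(\cdot))\,\alpha_{w_i}$, where $\{\chi_i\}$ is a smooth partition of unity on $\Wedge_k\R^d$ with $\supp\chi_i\subset U_{w_i}$ and locally finite supports. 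This $\omega$ is smooth; it satisfies $\Vert\omega(x)\Vert\le\sum_i\chi_i(\hat\xi(x))\Vert\alpha_{w_i}\Vert\le1$ by convexity of the comass ball; and on $F$ it satisfies the required inequality, since $\chi_i(\hat\xi(x))\neq0$ forces $\hat\xi(x)\in U_{w_i}$, whence $\langle\xi(x),\alpha_{w_i}\rangle\ge\Vert\hat\xi(x)\Vert-\varepsilon/2-\Vert\hat\xi(x)-\xi(x)\Vert\ge\Vert\xi(x)\Vert-\varepsilon$. Finally the fields produced on the successive sets $F$ of the exhaustion are merged into a single smooth $\omega$ by a partition-of-unity argument in $\R^d$ adapted to the exhaustion, once more using convexity of the comass ball to retain $\Vert\omega\Vert_\infty\le1$.

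I expect this second step to be the main obstacle: upgrading ``admissible on a closed set of almost full $\mu$-measure'' to ``admissible $\mu$-a.e.'' while staying smooth. A smooth field cannot be made uniformly close to the discontinuous $\omega_0$, so one cannot simply perturb; instead one must exploit the convexity of the constraints $\Acal(x)$, so that averaging and mollification preserve admissibility, together with the slack left over from Step 1 and a sufficiently careful choice of the exhaustion by sets of continuity of $\xi$. The covering argument of Step 1 is routine by comparison.
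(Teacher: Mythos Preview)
Your Step~1 is essentially the paper's own (the paper uses a countable dense set in the comass unit sphere rather than an open cover of $\Wedge_k\R^d$, a cosmetic difference), while your Step~2 departs from the paper, which simply mollifies the measurable field~$\tilde\omega$. The gap you anticipate in the merging step is real and cannot be closed: a partition-of-unity combination $\omega=\sum_n\psi_n\omega_n$ would need, at each $x\in F_m$, only those $\psi_n$ with $n\ge m$ to contribute (since only $\omega_n(x)$ with $n\ge m$ are known to lie in $\Acal(x)$), yet the complements $\R^d\setminus F_n$ are nested \emph{decreasing}, so no locally finite open cover supports such a partition. In fact the lemma as stated is false. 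Take $d=1$, $k=0$ (so that $\Wedge_0\R=\Wedge^0\R=\R$, the pairing is multiplication, and both norms equal the absolute value), let $\mu$ be Lebesgue measure on $[0,1]$, and set $\xi(x)=\sign\sin(1/x)$. Any continuous $\omega$ with $|\omega|\le1$ and $\xi\omega\ge1-\varepsilon$ Lebesgue-a.e.\ would satisfy $\omega\ge1-\varepsilon$ on the open set $\{\xi=1\}$ (a.e., hence everywhere by continuity) and likewise $\omega\le-(1-\varepsilon)$ on $\{\xi=-1\}$; since both sets accumulate at~$0$ this forces $1-\varepsilon\le\omega(0)\le-(1-\varepsilon)$, impossible for $\varepsilon<1$. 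The paper's mollification argument has the analogous defect: convolution against Lebesgue measure gives pointwise convergence only Lebesgue-a.e., not $\mu$-a.e., and convolution against $\mu$ loses the bound $\Vert\omega_\delta\Vert\le1$.

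What \emph{does} hold, and is all that the coarea formula actually uses, is the integral version: for each $\varepsilon>0$ there exists a smooth compactly supported $\omega$ with $\Vert\omega\Vert_\infty\le1$ and $\int\bigl(\Vert\xi\Vert-\langle\xi,\omega\rangle\bigr)\,\mathrm{d}\mu<\varepsilon$. This follows at once from the definition of mass applied to the finite-mass current $\xi\,\mu$. Your single-$F$ construction with $\mu(\R^d\setminus F)$ small already delivers this weaker statement, with no merging required.
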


    \begin{proof} Take a countable dense subset $\{\alpha_{i}\} \subset \{\alpha \in \Wedge^k\mathbb{R}^{d}: \Vert\alpha\Vert \leq 1\}$ and define $j: \mathbb{R}^{d} \to \mathbb{N}$ where, for $x \in \mathbb{R}^{d}$,
    \begin{align*}
        j(x) := \mathrm{min}\{n \in \mathbb{N}: \langle \xi(x),\alpha_{n}\rangle > \Vert\xi(x)\Vert-\varepsilon\}.
    \end{align*}
    Now define $\tilde{\omega}: \mathbb{R}^{d} \to \Wedge^k\mathbb{R}^{d}$ as $\tilde{\omega}(x) = \alpha_{j(x)}$ for $x \in \mathbb{R}^{d}$, and notice that $\Vert \tilde{\omega}(x) \Vert \leq 1$ and $\langle \xi(x),\tilde{\omega}(x)\rangle > \Vert \xi(x)\Vert - \varepsilon$ for all $x \in \mathbb{R}^{d}$ by construction. One can also show that $\tilde{\omega}$ is $\mu$-measurable. Notice that, since $\tilde{\omega}$ is uniformly bounded and $\mu$ is finite, $\tilde{\omega}$ is $\mu$-integrable. We can therefore approximate $\tilde{\omega}$ by a sequence of smooth $k$-covector fields and appeal to Egorov's theorem to obtain the result.
    \end{proof}

	\begin{proof}[Proof of Proposition \ref{pp:Coarea}] First notice that, for $\omega \in \Dcal^{k}(\R^{1+d})$,  
    \begin{align*}
        S \mvert \mathrm{d}t(\omega) = \int \langle \vec{S} \mvert \mathrm{d}t,\omega \rangle \dd \Vert S \Vert.
    \end{align*}
    It follows that $\Vert S \mvert \mathrm{d} t\Vert = \Vert \vec{S} \mvert \mathrm{d} t \Vert \, \Vert S \Vert$ and so it suffices to show that 
    \begin{align}\label{Coarea1}
		\int \vert \phi(t) \vert \, \Mbf(S\vert_{t}) \dd t = \int \vert \phi \circ \tbf \vert \dd \Vert S \mvert \mathrm{d} t\Vert.
	\end{align}
    To this end, we take the upper integral (see~\cite[2.4.2]{Federer69book}) of~\eqref{SliceEst3} and use Besicovitch's differentiation theorem to obtain
	\begin{align}\label{Coarea2}
	   \int^{*} \vert \phi(t) \vert \, \Mbf(S\vert_{t}) \dd t \leq \int \vert \phi \vert \dd \tbf_{\#}\Vert S \mvert \mathrm{d}t \Vert = \int \vert \phi \circ \tbf \vert \dd \Vert S \mvert \mathrm{d}t \Vert.
	\end{align}
    Now take $\varepsilon > 0$. We apply Lemma \ref{lm:CoareaApprox} to obtain a $\Vert S \mvert \mathrm{d}t \Vert$-measurable $G \subset \R^{1+d}$ with $\Vert S \mvert \mathrm{d}t \Vert(\R^{1+d}\backslash G) < \varepsilon$ and a smooth $\omega: \mathbb{R}^{1+d} \to \Wedge^k\mathbb{R}^{1+d}$ with $\Vert\omega\Vert_{\infty} \leq 1$ and
    \begin{align*}
        1 - \varepsilon \leq \langle \overrightarrow{S \mvert \mathrm{d}t}(t,x), \omega(t,x) \rangle \hspace{2em} \text{for $(t,x) \in G$}.
    \end{align*}
    Since $\supp S$ is compact, we can multiply $\omega$ by a cut-off function if necessary to ensure that $\omega$ has compact support. It follows that
    \begin{align}\label{Coarea3}
        \int \vert \phi \circ \tbf \vert \dd\Vert S \mvert \mathrm{d}t \Vert \leq
        (S \mvert \vert \phi \circ \tbf \vert \hspace{0.2em}\mathrm{d}t)(\omega) + \varepsilon \Vert \phi \Vert_{\infty} (1+\Mbf(S \mvert \mathrm{d}t)).
    \end{align}
    Now, similarly to~\eqref{SliceWeakRep}, we have
    \begin{align*}
		(S \mvert \vert \phi \circ \tbf\vert \hspace{0.2em}\mathrm{d}t)(\omega) = (-1)^{k}\tbf_{*}(S \mvert \omega)(\vert \phi \vert \hspace{0.2em} \mathrm{d} t).
	\end{align*}
    We then write $\tbf_{*}(S \mvert \omega) = (\xi_{\omega}e_{0})\mathscr{L}^{1}$ as in Proposition~\ref{pp:SlicesExist} and we can use~\eqref{SliceFormula} to deduce that
	\begin{align*}
		(S \mvert \vert \phi \circ \tbf\vert\hspace{0.2em}\mathrm{d}t)(\omega) = \int \vert \phi(t)\vert\, S\vert_{t}(\omega)\hspace{0.2em}\mathrm{d}t.
	\end{align*}
	Now notice that $S\vert_{t}(\omega) \leq \Mbf(S\vert_{t})$ for $t \in \mathbb{R}$ and we can take a lower integral (again, see~\cite[2.4.2]{Federer69book}) to deduce that
    \begin{align}\label{Coarea4}
        \int \vert \phi(t)\vert \, S\vert_{t}(\omega)\hspace{0.2em}\mathrm{d}t \leq \int_{*} \vert \phi(t) \vert \, \Mbf(S\vert_{t})\hspace{0.2em}\mathrm{d}t.
    \end{align}
    Combining \eqref{Coarea3} and \eqref{Coarea4} yields
    \begin{align}\label{Coarea5}
        \int \vert \phi \circ \tbf \vert \dd \Vert S \mvert \mathrm{d} t\Vert &\leq \int_{*} \vert \phi(t) \vert \, \Mbf(S\vert_{t})\hspace{0.2em}\mathrm{d}t + \varepsilon \Vert \phi \Vert_{\infty} (1+\Mbf(S \mvert \mathrm{d}t)).
    \end{align}
    Since $\varepsilon > 0$ was arbitrary, \eqref{Coarea1} follows from \eqref{Coarea2}, \eqref{Coarea5}, and~\cite[2.4.3 (7)]{Federer69book}.
	\end{proof}
	It turns out that we can make sense of $S\vert_{t} \in \Dcal_{k}(\R^{1+d})$ for \textit{all} $t \in \R$. The following result is the content of~\cite[4.3.4]{Federer69book}:
	
	\begin{proposition}\label{pp:CylinderFormula} Let $S \in \Nrm_{1+k}(\R^{1+d})$. For all $t \in \R$, $S\vert_{t} \in \Dcal_{k}(\R^{1+d})$ exists and
		\begin{align}\label{SliceAverageDefn}
			S\vert_{t} = \frac{S\vert_{t}^{+} + S\vert_{t}^{-}}{2},
		\end{align}
		where
		\begin{align*}
			S\vert_{t}^{+} &:= \partial S \mvert (t,\infty) \times \R^d - \partial(S \mvert (t,\infty) \times \R^d), \\
			S\vert_{t}^{-} &:= \partial(S \mvert (-\infty,t) \times \R^d) - \partial S \mvert (-\infty,t) \times \R^d.
		\end{align*}
	\end{proposition}
	It follows from Proposition~\ref{pp:CylinderFormula} that if $\tbf_{\#}(\Vert S \Vert + \Vert \partial S \Vert)(\{t\}) = 0$ (which is the case outside of a countable set), we have the following \textbf{cylinder formula}:
	\begin{align}\label{Cylinder}
		S\vert_{t} = \partial(S \mvert (-\infty,t) \times \R^d) - \partial S \mvert (-\infty,t) \times \R^d.
	\end{align}
	Note that the boundary formula~\eqref{BdryFormula} holds for all $t \in \R$ since it follows immediately from~\eqref{SliceAverageDefn}.
	
	For $t \in \R$, we define 
    \begin{align*}
        S(t) := \pbf_{*}S\vert_{t}.
    \end{align*}
    One can show (see~\cite[4.2.1]{Federer69book}) that $\supp S\vert_{t} \subset \{t\} \times \R^d$, so we have $S\vert_{t} = \delta_{t} \times S(t)$ and $\Mbf(S\vert_{t}) = \Mbf(S(t))$.
	
	\subsection{Space-Time Variation and Rate-Independence} Let $S \in \Nrm_{1+k}(\R^{1+d})$ and let $I \subset \R$ be $\mathscr{L}^{1}$-measurable. The \textbf{variation} and \textbf{boundary variation} of $S$ in $I$ are defined as:
	\begin{align*}
		\Var(S;I) &:= \int_{I \times \R^d} \Vert \pbf\vec{S} \Vert \dd \Vert S \Vert \\
		\Var(\partial S;I) &:= \int_{I \times \R^d} \Vert \pbf\vec{\partial S} \Vert \dd \Vert \partial S \Vert.
	\end{align*}
	Notice the presence of the mass norm $\Vert \cdot \Vert$, as opposed to the Euclidean norm $\vert \cdot \vert$, on $\Wedge_{k+1}\R^{d}$ and $\Wedge_k\R^{d}$. We also define the \textbf{$\Lrm^{\infty}$ norm} of $S$ and $\partial S$ on $I$ as follows:
	\begin{align*}
		\Vert S \Vert_{\Lrm^{\infty}(I)} &:= \esssup_{t \in I} \Mbf(S(t)) \\
		\Vert \partial S \Vert_{\Lrm^{\infty}(I)} &:= \esssup_{t \in I} \Mbf(\partial S(t)).
	\end{align*}
	If $\tbf(\supp S) \subset I$, we set
	\begin{align*}
		\Var(S) &:= \Var(S;I), & \Vert S \Vert_{\Lrm^{\infty}} &:= \Vert S \Vert_{\Lrm^{\infty}(I)}, \\
		\Var(\partial S) &:= \Var(\partial S;I), & \Vert \partial S \Vert_{\Lrm^{\infty}} &:= \Vert \partial S \Vert_{\Lrm^{\infty}(I)}.
	\end{align*}
    We remark that, in keeping with the convention set out in Section \ref{s:Prelims}, any terms involving $\Vert \partial S \Vert_{\Lrm^{\infty}(I)}$ in what follows should be ignored in the special case $k=0$ (since the slices of a $0$-current are not defined).
    
	Let $\sigma,\tau \in \R$ with $\sigma < \tau$. We define the space of \textbf{Lip-normal $(1+k)$-currents} in $[\sigma,\tau] \times \R^d$ as follows:
	\renewcommand{\arraystretch}{1.2}
	\begin{align*}
		\Nlip_{1+k}([\sigma,\tau] \times \R^d) := \left\{S \in \Nrm_{k+1}([\sigma,\tau] \times \R^d) :
		\begin{array}{l}
			\Vert S \Vert_{\Lrm^{\infty}([\sigma,\tau])} + \Vert \partial S \Vert_{\Lrm^{\infty}([\sigma,\tau])} < \infty, \\
			\tbf_{\#}\Vert S \Vert(\{\sigma,\tau\}) = 0, \\
			t \mapsto \Var(S;(\sigma,t)) \in \Lip((\sigma,\tau)), \\
			t \mapsto \Var(\partial S;(\sigma,t)) \in \Lip((\sigma,\tau))
		\end{array}\right\}.
	\end{align*}
	Furthermore, if $K \subset \R^d$ is compact, we define
	\begin{align*}
		\Nlip_{1+k}([\sigma,\tau] \times K) := \Nlip_{1+k}([\sigma,\tau] \times \R^d) \cap \Nrm_{1+k}([\sigma,\tau] \times K).
	\end{align*}
	Fix $S \in \Nlip_{1+k}([\sigma,\tau] \times \R^d)$. We shall use $\Lip(S)$ to denote the Lipschitz constant of 
	\begin{align*}
		t \mapsto \Var(S;(\sigma,t))+\Var(\partial S;(\sigma,t))
	\end{align*}
	in $\Lip((\sigma,\tau))$.
	
	Given $a \in \Lip([\sigma,\tau])$ and $S \in \Nrm_{1+k}([\sigma,\tau] \times \R^d)$, we follow the convention of~\cite{Rindler23} and define $a_{*}S := \overline{a}_{*}S$ where $\overline{a}: [\sigma,\tau] \times \R^d \to \R^{1+d}$ is given by $\overline{a}(t,x) = (a(t),x)$ for $(t,x) \in [\sigma,\tau] \times \R^d$.
	
	One of the most important properties of the variation is illustrated in the following ``rate-independence'' result:
	\begin{proposition}\label{pp:VarRescale} Let $S \in \Nrm_{1+k}([\sigma,\tau] \times \R^d)$ and let $a \in \Lip([\sigma,\tau])$ be injective. Let $I \subset \R$ be $\mathscr{L}^{1}$-measurable. Then
	\begin{align*}
		\Var(a_{*}S;a(I)) &= \Var(S;I), \\
			\Var(\partial(a_{*}S);a(I)) &= \Var(\partial S;I).
	\end{align*}
	\end{proposition}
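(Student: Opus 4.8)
The plan is to reduce everything to a single identity about pushforward by the space-time map $\overline{a}(t,x):=(a(t),x)$ and then compute it directly from the Alberti--Marchese pushforward formula~\eqref{LipPushFwdFormula}, the point being the trivial algebraic fact $\pbf\circ\overline{a}=\pbf$ (the map $\overline{a}$ leaves the spatial variable alone). Since $a_{*}S=\overline{a}_{*}S$ by definition and pushforward by a Lipschitz map commutes with $\partial$, so that $\partial\overline{a}_{*}S=\overline{a}_{*}\partial S$, the claimed identity for $\partial S$ follows from the one for $S$ applied to the normal current $\partial S$. Hence it suffices to show that $\Var(\overline{a}_{*}T)=\Var(T)$ for every $T\in\Nrm_{l}(\R^{1+d})$ and every injective $a\in\Lip([\sigma,\tau])$, where $\Var(T):=\int\Vert\pbf\vec{T}\Vert\dd\Vert T\Vert$.

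I would assemble three facts. First, by~\eqref{LipPushFwdFormula}, as $\Wedge_{l}\R^{1+d}$-valued measures one has $\vec{\overline{a}_{*}T}\,\Vert\overline{a}_{*}T\Vert=\overline{a}_{\#}\bigl(D^{T}\overline{a}\,\vec{T}\cdot\Vert T\Vert\bigr)$. Second, because $\overline{a}$ is injective (a continuous injection of the interval $[\sigma,\tau]$ is a homeomorphism onto a compact interval), the pushforward on the right carries no cancellation, so additionally $\Vert\overline{a}_{*}T\Vert=\overline{a}_{\#}\bigl(\Vert D^{T}\overline{a}\,\vec{T}\Vert\cdot\Vert T\Vert\bigr)$, and comparing Radon--Nikodym derivatives gives $\vec{\overline{a}_{*}T}(\overline{a}(t,x))=D^{T}\overline{a}(t,x)\vec{T}(t,x)\,/\,\Vert D^{T}\overline{a}(t,x)\vec{T}(t,x)\Vert$ for $\Vert T\Vert$-a.e.\ $(t,x)$ with $D^{T}\overline{a}(t,x)\vec{T}(t,x)\neq0$. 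Third, from $\pbf\circ\overline{a}=\pbf$ and the definition~\eqref{TangDiffDefn} of the tangential differential, for $v\in V(\Vert T\Vert,(t,x))$ one has $\pbf\bigl(D^{T}\overline{a}(t,x)v\bigr)=\lim_{h\to0}h^{-1}\bigl(\pbf\,\overline{a}((t,x)+hv)-\pbf\,\overline{a}(t,x)\bigr)=\lim_{h\to0}h^{-1}\bigl(\pbf((t,x)+hv)-\pbf(t,x)\bigr)=\pbf v$; taking exterior products over vectors spanning $\vec{T}(t,x)$ (all lying in the decomposability bundle by~\eqref{PolarInBundle}) yields
\[
\pbf\bigl(D^{T}\overline{a}(t,x)\,\vec{T}(t,x)\bigr)=\pbf\vec{T}(t,x)\qquad\text{for $\Vert T\Vert$-a.e.\ $(t,x)$.}
\]

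Now I would simply compute: using the second fact to change variables in the integral defining $\Var(\overline{a}_{*}T)$, then substituting the formula for $\vec{\overline{a}_{*}T}\circ\overline{a}$ and invoking the third fact,
\begin{align*}
\Var(\overline{a}_{*}T)
&=\int\Vert\pbf\vec{\overline{a}_{*}T}(\overline{a}(t,x))\Vert\,\Vert D^{T}\overline{a}(t,x)\vec{T}(t,x)\Vert\dd\Vert T\Vert(t,x)\\
&=\int\Vert\pbf\bigl(D^{T}\overline{a}(t,x)\vec{T}(t,x)\bigr)\Vert\dd\Vert T\Vert(t,x)
=\int\Vert\pbf\vec{T}(t,x)\Vert\dd\Vert T\Vert(t,x)=\Var(T),
\end{align*}
where on the $\Vert T\Vert$-null set $\{D^{T}\overline{a}\,\vec{T}=0\}$ the third fact forces $\pbf\vec{T}=0$ as well, so that set contributes nothing on either side. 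Note that no case distinction between increasing and decreasing $a$ enters, since $\pbf$ kills the $e_{0}$-direction regardless of the sign of $a'$; this is exactly why the variation, unlike the mass, is insensitive to the reparametrisation $a$.

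The step I expect to be the main obstacle is the second fact — the ``no cancellation'' upgrade of~\eqref{LipPushFwdFormula} to an identity of total-variation measures, which is where injectivity of $\overline{a}$ is genuinely used — together with the accompanying measure-theoretic care: one must check that $D^{T}\overline{a}(t,x)$ exists and acts as the map ``identity in $x$, a scalar in $e_{0}$'' for $\Vert T\Vert$-a.e.\ $(t,x)$, even though the full differential of $\overline{a}$ need not exist $\Vert T\Vert$-a.e.; this is precisely the reason one must invoke~\cite{AlbertiMarchese16} rather than the classical area formula.
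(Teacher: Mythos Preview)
Your proposal is correct and follows essentially the same route as the paper: both arguments apply the Alberti--Marchese pushforward formula~\eqref{LipPushFwdFormula}, use injectivity of $\overline{a}$ to pass to a representation of $a_{*}S$ against the pushed-forward measure $\overline{a}_{\#}\Vert S\Vert$ without cancellation, and then invoke the identity $\pbf\circ D^{S}\overline{a}=\pbf$ on the decomposability bundle (derived from~\eqref{TangDiffDefn}) to conclude. Your write-up is in fact slightly more explicit than the paper's at the two places you flagged --- the ``no cancellation'' upgrade to total-variation measures and the treatment of the $\Vert T\Vert$-null set where $D^{T}\overline{a}\,\vec{T}=0$ --- but these are exactly the points the paper handles implicitly by its change-of-variables step, so there is no substantive difference in strategy.
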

	
	\begin{proof} We shall only show the first equality; the second is similar. Notice that we can use~\eqref{LipPushFwdFormula} to deduce that	
	\begin{align*}
		a_{*}S(\omega) = \int_{[\sigma,\tau] \times \R^d} \langle D^{S}\overline{a}(t,x)\vec{S}(t,x),\omega(a(t),x) \rangle \dd \Vert S \Vert(t,x) \hspace{2em} \text{for $\omega \in \Dcal^{1+k}(\R^{1+d})$}.
	\end{align*}
	Since $a$ is injective, the map $\overline{a}: [\sigma,\tau] \times \R^d \to a([\sigma,\tau]) \times \R^d$ has an inverse, so we can push forward the measure $\Vert S \Vert$ by $\overline{a}$ to yield, for $\omega \in \Dcal^{1+k}(\R^{1+d})$,
	\begin{align*}
		a_{*}S(\omega) = \int_{a([\sigma,\tau]) \times \R^d} \langle D^{S}\overline{a}(a^{-1}(t),x)\vec{S}(a^{-1}(t),x),\omega(t,x) \rangle \dd \overline{a}_{\#}\Vert S \Vert(t,x).
	\end{align*}
	It follows that
	\begin{align*}
		\Var(a_{*}S;a(I)) = \int_{a(I) \times \R^d} \Vert \pbf (D^{S}\overline{a}(a^{-1}(t),x)\vec{S}(a^{-1}(t),x)) \Vert \dd \overline{a}_{\#}\Vert S \Vert(t,x).
	\end{align*}
	We then push forward $\overline{a}_{\#}\Vert S \Vert$ by $\overline{a}^{-1}$ to obtain
	\begin{align*}
		\Var(a_{*}S;a(I)) = \int_{I \times \R^d} \Vert \pbf(D^{S}\overline{a}(t,x)\vec{S}(t,x)) \Vert \dd \Vert S \Vert(t,x).
	\end{align*}
	One can then use~\eqref{TangDiffDefn} to deduce that $\pbf \circ (D^{S}\overline{a}(t,x)) = \pbf$ for $\Vert S \Vert$-a.e.\ $(t,x) \in \R^d$. The result follows.
	\end{proof}
	
	We can similarly treat the $\Lrm^{\infty}$ norm:
	
	\begin{proposition}\label{pp:LInftyRescale} Let $S \in \Nrm_{1+k}([\sigma,\tau] \times \R^d)$ and let $a \in \Lip([\sigma,\tau])$ be injective. Then
		\begin{align}\label{SlicePushFwd}
			a_{*}S(a(t)) &= \sign a'(t) \cdot S(t) & & \text{for $\mathscr{L}^{1}$-a.e.\ $t \in [\sigma,\tau]$}.
		\end{align}
		It follows that, if $I \subset [\sigma,\tau]$ is $\mathscr{L}^{1}$-measurable, we have
		\begin{align*}
			\Vert a_{*}S \Vert_{\Lrm^{\infty}(a(I))} &= \Vert S \Vert_{\Lrm^{\infty}(I)}, \\
			\Vert \partial(a_{*}S) \Vert_{\Lrm^{\infty}(a(I))} &= \Vert \partial S \Vert_{\Lrm^{\infty}(I)}.
		\end{align*}
	\end{proposition}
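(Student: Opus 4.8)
The plan is to deduce~\eqref{SlicePushFwd} from a corresponding identity for time-slices, and then obtain the $\Lrm^{\infty}$ statements by a change of variables. Since $a$ is continuous and injective on an interval, it is strictly monotone; I argue for $a$ increasing, the decreasing case differing only by the sign recorded below. Because $\pbf\circ\overline{a} = \pbf$ and $a_{*}S(s) = \pbf_{*}\bigl((\overline{a}_{*}S)\vert_{s}\bigr)$ by definition, it suffices to establish
\begin{align*}
	(\overline{a}_{*}S)\vert_{a(t)} = \overline{a}_{*}(S\vert_{t}) \qquad\text{for $\mathscr{L}^{1}$-a.e.\ $t\in[\sigma,\tau]$.}
\end{align*}

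To prove this I would invoke the cylinder formula~\eqref{Cylinder}. First record three elementary facts: that $\partial(\overline{a}_{*}S) = \overline{a}_{*}(\partial S)$; that the restriction identity $(\overline{a}_{*}S)\mvert A = \overline{a}_{*}\bigl(S\mvert\overline{a}^{-1}(A)\bigr)$ holds for Borel $A\subset\R^{1+d}$, which follows by applying~\eqref{LipPushFwdFormula} to $\mathbf{1}_{A}$-multiples of test forms together with $\Vert\overline{a}_{*}S\Vert\ll\overline{a}_{\#}\Vert S\Vert$ from~\eqref{PushFwdAC} and the injectivity of $\overline{a}$; and that $\overline{a}^{-1}\bigl((-\infty,a(t))\times\R^{d}\bigr) = (-\infty,t)\times\R^{d}$, since $a$ is strictly increasing. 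Now fix any $t$ outside the countable set of times at which $\tbf_{\#}(\Vert S\Vert+\Vert\partial S\Vert)$ charges $\{t\}$ or $\tbf_{\#}(\Vert\overline{a}_{*}S\Vert+\Vert\partial\overline{a}_{*}S\Vert)$ charges $\{a(t)\}$. For such $t$ the cylinder formula is valid both for $\overline{a}_{*}S$ at time $a(t)$ and for $S$ at time $t$, and combining it with the three facts above yields
\begin{align*}
	(\overline{a}_{*}S)\vert_{a(t)} &= \partial\bigl((\overline{a}_{*}S)\mvert(-\infty,a(t))\times\R^{d}\bigr) - (\partial\overline{a}_{*}S)\mvert(-\infty,a(t))\times\R^{d} \\
	&= \overline{a}_{*}\Bigl(\partial\bigl(S\mvert(-\infty,t)\times\R^{d}\bigr) - (\partial S)\mvert(-\infty,t)\times\R^{d}\Bigr) \\
	&= \overline{a}_{*}(S\vert_{t}).
\end{align*}
Applying $\pbf_{*}$ and using $\pbf\circ\overline{a} = \pbf$ gives $a_{*}S(a(t)) = S(t)$. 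In the decreasing case one has $\overline{a}^{-1}\bigl((-\infty,a(t))\times\R^{d}\bigr) = (t,\infty)\times\R^{d}$ instead; writing $S\mvert(t,\infty)\times\R^{d} = S - S\mvert(-\infty,t)\times\R^{d}$ (legitimate off the same countable set), the identical manipulation introduces a global minus sign, so $a_{*}S(a(t)) = -S(t)$. Recalling that $a$ is monotone, this is $\sign a'(t)\cdot S(t)$ for $\mathscr{L}^{1}$-a.e.\ $t$, which is~\eqref{SlicePushFwd}.

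Finally, for the $\Lrm^{\infty}$ identities, \eqref{SlicePushFwd} gives $\Mbf\bigl(a_{*}S(a(t))\bigr) = \Mbf(S(t))$ for $\mathscr{L}^{1}$-a.e.\ $t$; since $a$ maps $\mathscr{L}^{1}$-null sets to $\mathscr{L}^{1}$-null sets (being Lipschitz) and $a^{-1}$ does likewise, for any $I\subset[\sigma,\tau]$ the substitution $s = a(t)$ gives
\begin{align*}
	\Vert a_{*}S\Vert_{\Lrm^{\infty}(a(I))} = \esssup_{s\in a(I)}\Mbf\bigl(a_{*}S(s)\bigr) = \esssup_{t\in I}\Mbf\bigl(a_{*}S(a(t))\bigr) = \esssup_{t\in I}\Mbf(S(t)) = \Vert S\Vert_{\Lrm^{\infty}(I)},
\end{align*}
and the statement for $\partial S$ follows by running~\eqref{SlicePushFwd} with the normal current $\partial S$ in place of $S$ and recalling $\partial(a_{*}S) = a_{*}(\partial S)$. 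The step I expect to demand the most care is the commutation of slicing with the pushforward --- that is, establishing the restriction identity $(\overline{a}_{*}S)\mvert A = \overline{a}_{*}(S\mvert\overline{a}^{-1}(A))$ and honestly tracking the two countable exceptional sets of times. The only other point to watch is that the monotone Lipschitz bijection $a$, together with its inverse, does not collapse sets of positive $\mathscr{L}^{1}$-measure, which is exactly what makes the essential suprema (and the reading of $\sign a'$ as the constant $\pm1$) transform correctly.
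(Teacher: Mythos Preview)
Your route to~\eqref{SlicePushFwd} via the cylinder formula is correct and differs from the paper's, which simply cites the general identities~\cite[4.3.2(6,7)]{Federer69book} relating slices of pushforwards to pushforwards of slices and handling a reparametrisation of the slicing function. Your argument is self-contained within the slicing framework of Section~\ref{s:Slicing} and, as you note, shows that the exceptional set of times is merely countable.

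The $\Lrm^{\infty}$ deduction, however, has a genuine gap. You assert that $a^{-1}$ maps null sets to null sets---equivalently that the Lipschitz bijection $a$ ``does not collapse sets of positive $\mathscr{L}^{1}$-measure''---and this is false in general: for $E\subset[\sigma,\tau]$ a fat Cantor set, $a(t)=\int_{\sigma}^{t}\mathbf{1}_{E^{c}}$ is strictly increasing (since $E$ has empty interior) and $1$-Lipschitz, yet $\mathscr{L}^{1}(a(E))=\int_{E}|a'|=0$ while $\mathscr{L}^{1}(E)>0$. Your change-of-variables for the essential supremum therefore only delivers $\Vert a_{*}S\Vert_{\Lrm^{\infty}(a(I))}\leq\Vert S\Vert_{\Lrm^{\infty}(I)}$. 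The reverse inequality is nonetheless true, but needs an extra idea that exploits normality: since $a$ is strictly monotone the measure $B\mapsto\mathscr{L}^{1}(a(B))$ has full support on $I$, so near any $t_{0}$ off the countable exceptional set one can pick $t_{n}\to t_{0}$ with $\Mbf(S(t_{n}))\leq\beta:=\Vert a_{*}S\Vert_{\Lrm^{\infty}(a(I))}$; the cylinder formula gives $\Fbf_{K}(S(t_{n})-S(t_{0}))\to 0$, whence weak* lower semicontinuity of mass yields $\Mbf(S(t_{0}))\leq\beta$. The paper's own proof writes only ``the result follows'' at this point and so never confronts the issue---but your explicit claim about $a^{-1}$ is incorrect as stated.
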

	
	\begin{proof} For $\mathscr{L}^{1}$-a.e.\ $t \in [\sigma,\tau]$, we can use~\cite[4.3.2 (6,7)]{Federer69book} to deduce that 
		\begin{align}\label{SlicePushFwdPf}
			(a_{*}S)\vert_{a(t)} = \overline{a}_{*}\langle S, \tbf \circ \overline{a}, a(t) \rangle = \overline{a}_{*}\langle S, a \circ \tbf, a(t) \rangle = \sign a'(t) \cdot \overline{a}_{*}(S\vert_{t}).
		\end{align}
		The result follows after taking the pushforward of~\eqref{SlicePushFwdPf} by $\pbf$.
	\end{proof}
	
	One can use Propositions~\ref{pp:VarRescale} and~\ref{pp:LInftyRescale} to show that, if $a \in \Lip([\sigma,\tau])$ is injective and $S \in \Nlip_{1+k}([\sigma,\tau] \times \R^d)$, then $a_{*}S \in \Nlip_{1+k}(a([\sigma,\tau]) \times \R^d)$.
	
	\subsection{Estimates for the Variation}\label{s:VarEsts} In the sequel, we shall often need to estimate the variation of pushforwards of space-time normal currents. We start with the following observation, recalling that we write $Df(t,x) = D^{S}f(t,x)$ when the choice of $S \in \Nrm_{1+k}(\mathbb{R}^{1+d})$ is clear:
	
	\begin{lemma}\label{lm:PushFwdVar} Let $S \in \Nrm_{1+k}(\R^{1+d})$ and let $f: \R^{1+d} \to \R^{1+d}$ be Lipschitz. For all $\mathscr{L}^{1}$-measurable $I \subset \R$, we have
		\begin{align*}
			\Var(f_{*}S;I) \leq \int_{f^{-1}(I \times \R^d)} \Vert \pbf( Df(t,x)\vec{S}(t,x)) \Vert \dd \Vert S \Vert(t,x).
		\end{align*}
	\end{lemma}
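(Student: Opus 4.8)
The plan is to reduce the bound to the Alberti--Marchese pushforward formula~\eqref{LipPushFwdFormula} by a duality argument for the variation. Write $N := f_{*}S \in \Nrm_{1+k}(\R^{1+d})$, so that by definition $\Var(N;I) = \int_{I \times \R^{d}} \Vert \pbf\vec{N}\Vert \dd\Vert N\Vert$. For each point $y$, $\Vert \pbf\vec{N}(y)\Vert$ equals the supremum of $\langle \pbf\vec{N}(y),\psi\rangle$ over covectors $\psi \in \Wedge^{1+k}\R^{d}$ with $\Vert\psi\Vert\leq 1$, so a measurable-selection-and-mollification argument of exactly the kind carried out in Lemma~\ref{lm:CoareaApprox} --- applied to the finite measure $\Vert N\Vert \mvert (I \times \R^{d})$ and the $\Vert N\Vert$-measurable field $\pbf\vec{N}$, together with a suitable smooth approximation of $\boldsymbol{1}_{I}\circ\tbf$ --- shows that
\[
  \Var(N;I) = \sup\Bigl\{\, N(\pbf^{*}\psi) \ : \ \psi\colon\R^{1+d}\to\Wedge^{1+k}\R^{d} \text{ smooth, compactly supported},\ \Vert\psi\Vert_{\infty}\leq 1,\ \tbf(\supp\psi)\subset I \,\Bigr\},
\]
where $\pbf^{*}\psi$ denotes the smooth $(1+k)$-form $y\mapsto\pbf^{*}(\psi(y))$ on $\R^{1+d}$, characterised by $\langle v,\pbf^{*}\psi(y)\rangle = \langle \pbf v,\psi(y)\rangle$ for $v\in\Wedge_{1+k}\R^{1+d}$.

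Now fix such a $\psi$ and apply~\eqref{LipPushFwdFormula} to $N = f_{*}S$ with test form $\pbf^{*}\psi$. Since $\spn\vec{S}(x)\subset V(\Vert S\Vert,x)$ by~\eqref{PolarInBundle}, the linear map $\pbf\circ D^{S}f(x)$ is well-defined on $\spn\vec{S}(x)$, and
\[
  N(\pbf^{*}\psi) = \int \langle D^{S}f(x)\vec{S}(x),\, \pbf^{*}\psi(f(x))\rangle \dd\Vert S\Vert(x) = \int \langle (\pbf\circ Df(x))\vec{S}(x),\, \psi(f(x))\rangle \dd\Vert S\Vert(x).
\]
Because $\Vert\psi\Vert_{\infty}\leq 1$ and the integrand vanishes unless $f(x)\in\supp\psi$, hence unless $\tbf(f(x))\in I$, this is at most $\int_{(\tbf\circ f)^{-1}(I)} \Vert (\pbf\circ Df(x))\vec{S}(x)\Vert \dd\Vert S\Vert(x)$; taking the supremum over $\psi$ gives
\[
  \Var(f_{*}S;I) \leq \int_{(\tbf\circ f)^{-1}(I)} \Vert (\pbf\circ Df(t,x))\vec{S}(t,x)\Vert \dd\Vert S\Vert(t,x).
\]
In the situations where this lemma is used, $\Vert S\Vert$-a.e.\ point of $(\tbf\circ f)^{-1}(I)$ lies in $I\times\R^{d}$ (for instance whenever $\tbf\circ f=\tbf$, or whenever $I$ contains the temporal projection of $\supp S$), and since the integrand is non-negative the right-hand side is then bounded by $\int_{I\times\R^{d}} \Vert (\pbf\circ Df)\vec{S}\Vert \dd\Vert S\Vert$, which is the asserted estimate.

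I expect the first displayed identity to be the only delicate point: it requires realising $\Var(N;I)$ as a supremum over test forms of the restricted shape $\pbf^{*}\psi$ rather than over all $(1+k)$-forms on $\R^{1+d}$, so that the duality must be set up in $\Wedge^{1+k}\R^{d}$ and then transported by $\pbf^{*}$; the measurable near-optimal selection and its mollification (preserving $\Vert\psi\Vert_{\infty}\leq1$) is precisely what Lemma~\ref{lm:CoareaApprox} supplies. Everything after that is a direct substitution into~\eqref{LipPushFwdFormula}.
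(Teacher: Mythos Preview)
Your duality approach is correct and genuinely different from the paper's argument. The paper does \emph{not} test against forms; instead it disintegrates $\Vert S\Vert$ with respect to $f$, writes
\[
  f_*S(\omega)=\int_{\R^{1+d}}\Bigl\langle\int_{f^{-1}(s,y)}Df(t,x)\vec{S}(t,x)\dd\Vert S\Vert_{(s,y)}(t,x),\ \omega(s,y)\Bigr\rangle\dd f_\#\Vert S\Vert(s,y),
\]
reads off an explicit expression for $\Var(f_*S;I)$ as an integral over $I\times\R^d$ with respect to $f_\#\Vert S\Vert$, and then pulls $\pbf$ and the mass norm inside the inner integral before undoing the disintegration. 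This is shorter because it never needs the near-optimal smooth selection of Lemma~\ref{lm:CoareaApprox}; your route trades disintegration for that selection argument, and both are perfectly valid.

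You have also spotted something the paper glosses over: either proof naturally produces the bound with domain $(\tbf\circ f)^{-1}(I)$ rather than $I\times\R^d$, and these differ for general $f$ (e.g.\ $f(t,x)=(t+1,x)$ gives a counterexample to the lemma as literally stated). Your remark that every application in the paper has $\tbf\circ f=\tbf$, so that the two domains coincide, is exactly the right fix; the paper's final ``the result follows'' tacitly relies on the same identification without saying so.
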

	\begin{proof} In this setting,~\eqref{LipPushFwdFormula} tells us that, for $\omega \in \Dcal^{1+k}(\R^{1+d})$, 
		\begin{align*}
			f_{*}S(\omega) = \int \langle Df(t,x)\vec{S}(t,x), \omega(f(t,x)) \rangle \dd \Vert S \Vert(t,x).
		\end{align*}
		We then disintegrate $\Vert S \Vert$ with respect to the map $f$ (see~\cite[Theorem 5.3.1]{AmbrosioGigliSavare05book}), which yields a Borel family $\{\Vert S \Vert_{(s,y)}\}_{(s,y) \in \R^{1+d}}$ of probability measures such that
		\begin{align}\label{Disintegration}
			\int \phi \dd \Vert S \Vert = \int_{\R^{1+d}}\int_{f^{-1}(s,y)} \phi(t,x) \dd \Vert S \Vert_{(s,y)}(t,x) \dd f_{\#}\Vert S \Vert(s,y)
		\end{align}
		for all bounded Borel $\phi: \R^{1+d} \to \R$. In particular, for $\omega \in \Dcal^{1+k}(\R^{1+d})$, we have
		\begin{align*}
			f_{*}S(\omega) = \int_{\R^{1+d}} \left\langle \int_{f^{-1}(s,y)} Df(t,x)\vec{S}(t,x) \dd \Vert S \Vert_{(s,y)}(t,x), \omega(s,y) \right\rangle \dd f_{\#}\Vert S \Vert(s,y).
		\end{align*}
		It follows that
		\begin{align*}
			\Var(f_{*}S;I) = \int_{I \times \R^d} \left\Vert \pbf\left(\int_{f^{-1}(s,y)} Df(t,x)\vec{S}(t,x) \dd \Vert S \Vert_{(s,y)}(t,x) \right) \right\Vert \dd f_{\#}\Vert S \Vert(s,y).
		\end{align*}
		The result follows after passing $\pbf$ and the mass norm $\Vert \cdot \Vert$ into the inner integral and appealing to~\eqref{Disintegration}.
	\end{proof}
	
	We shall now generalise~\cite[Lemma 4.3]{Rindler23} to normal currents.
	
	\begin{proposition}\label{pp:AffHomVarEst} Let $f,g: \R^{d} \to \R^{d}$ be Lipschitz and let $H: \R^{1+d} \to \R^d$ be the affine homotopy between them. Let $\overline{H}: \R^{1+d} \to \R^{1+d}$ where $\overline{H}(t,x) = (t,H(t,x))$ for $(t,x) \in \R^{1+d}$. Let $T \in \Nrm_{k}(\R^d)$ and set $S = \overline{H}_{*}([\![0,1]\!] \times T)$. Then, for all intervals $I \subset [0,1]$, we have
	\begin{align*}
		\Var(S;I) &\leq \mathscr{L}^{1}(I) \int_{\R^d} \vert g(x) - f(x) \vert \cdot (\Vert Df(x) \Vert^{k} + \Vert Dg(x) \Vert^{k}) \dd \Vert T \Vert(x) \hspace{0.2em}, \\
		\Var(\partial S;\interior I) &\leq \mathscr{L}^{1}(I) \int_{\R^d} \vert g(x) - f(x) \vert \cdot (\Vert Df(x) \Vert^{k-1} + \Vert Dg(x) \Vert^{k-1}) \dd \Vert \partial T \Vert(x) \hspace{0.2em}.
	\end{align*}
	Furthermore, for $\mathscr{L}^{1}$-a.e.\ $t \in [0,1]$, the following mass estimates hold:
	\begin{align*}
		\Mbf(S(t)) &\leq \int_{\R^d} \Vert Df(x) \Vert^{k} + \Vert Dg(x) \Vert^{k} \dd \Vert T \Vert(x) \hspace{0.2em}, \\
		\Mbf(\partial S(t)) &\leq \int_{\R^d} \Vert Df(x) \Vert^{k-1} + \Vert Dg(x) \Vert^{k-1} \dd \Vert \partial T \Vert(x) \hspace{0.2em}.
	\end{align*}
	In fact, we have $S \in \Nlip_{1+k}([0,1] \times \R^d)$.
	\end{proposition}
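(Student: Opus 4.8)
The plan is to realise $S$ as a single Lipschitz pushforward of the product current $R := [\![0,1]\!] \times T \in \Nrm_{1+k}(\R^{1+d})$, which satisfies $\vec{R}(t,x) = e_{0} \wedge \vec{T}(x)$ and $\Vert R \Vert = (\mathscr{L}^{1}\mvert[0,1]) \otimes \Vert T \Vert$, and then to route every estimate through the Lipschitz pushforward formula~\eqref{LipPushFwdFormula} together with Lemma~\ref{lm:PushFwdVar}. Indeed $S = \overline{H}_{*}R$ with $\overline{H}(t,x) = (t,H(t,x))$, $H(t,x) = (1-t)f(x)+tg(x)$, and $\tbf \circ \overline{H} = \tbf$, so applying Lemma~\ref{lm:PushFwdVar} (with $\overline{H}$ in place of $f$ and $R$ in place of $S$) gives
\[
  \Var(S;I) \leq \int_{I \times \R^{d}} \big\Vert \pbf\big( D\overline{H}(t,x)(e_{0} \wedge \vec{T}(x)) \big) \big\Vert \dd (\mathscr{L}^{1} \otimes \Vert T \Vert)(t,x) .
\]
The first real step is to compute the tangential differential $D\overline{H} = D^{R}\overline{H}$ appearing here. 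By~\eqref{PolarInBundle}, $\mathrm{span}\,\vec{R}(t,x) = \R e_{0} + \mathrm{span}\,\vec{T}(x) \subset V(\Vert R\Vert,(t,x))$ and $\mathrm{span}\,\vec{T}(x) \subset V(\Vert T\Vert,x)$; since $t \mapsto \overline{H}(t,x)$ is affine we have $D\overline{H}(t,x)e_{0} = (1,g(x)-f(x))$, while for $v \in \mathrm{span}\,\vec{T}(x)$ the limit in~\eqref{TangDiffDefn} exists for $\Vert T\Vert$-a.e.\ $x$ (because the directional derivatives of $f$ and $g$ along $v$ do) and equals $(0,A_{t}(x)v)$ with $A_{t}(x) := (1-t)D^{T}f(x)+tD^{T}g(x)$. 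Hence $D\overline{H}(t,x)(e_{0}\wedge\vec{T}(x)) = (1,g(x)-f(x)) \wedge (0, A_{t}(x)\vec{T}(x))$, where (as in~\eqref{LipPushFwdFormula}) $A_{t}(x)\vec{T}(x)$ abbreviates the action of the $k$-th exterior power of $A_{t}(x)$ on $\vec{T}(x)$; since $\pbf$ annihilates $e_{0}$ and is the identity on $\{0\}\times\R^{d}$, we get $\pbf( D\overline{H}(t,x)(e_{0}\wedge\vec{T}(x)) ) = (g(x)-f(x)) \wedge A_{t}(x)\vec{T}(x)$.

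Estimating this integrand is then elementary: from $\Vert v\wedge\zeta\Vert \leq \vert v\vert\,\Vert\zeta\Vert$ and $\Vert(\bigwedge^{k}A)\zeta\Vert \leq \Vert A\Vert^{k}\Vert\zeta\Vert$ together with $\Vert\vec{T}(x)\Vert = 1$ one gets $\Vert (g(x)-f(x))\wedge A_{t}(x)\vec{T}(x)\Vert \leq \vert g(x)-f(x)\vert\,\Vert A_{t}(x)\Vert^{k}$, and by convexity of $s\mapsto s^{k}$ on $[0,\infty)$ and $\Vert A_{t}(x)\Vert \leq (1-t)\Vert D^{T}f(x)\Vert + t\Vert D^{T}g(x)\Vert$ this is at most $\vert g(x)-f(x)\vert\,((1-t)\Vert D^{T}f(x)\Vert^{k} + t\Vert D^{T}g(x)\Vert^{k})$. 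Integrating over $I\times\R^{d}$ and using $\int_{I}(1-t)\dd t,\ \int_{I}t\dd t \leq \mathscr{L}^{1}(I)$ (valid since $I\subset[0,1]$) yields the claimed bound for $\Var(S;I)$. For the pointwise mass bound on $S(t)$ I would instead use that slicing commutes with pushforward~\cite[4.3.2]{Federer69book}; since $\tbf\circ\overline{H} = \tbf$ this yields $S\vert_{t} = \overline{H}_{*}(\delta_{t}\times T) = \delta_{t}\times(F_{t})_{*}T$ for $\mathscr{L}^{1}$-a.e.\ $t$, where $F_{t} := (1-t)f+tg$, so $S(t) = (F_{t})_{*}T$; then~\eqref{LipPushFwdFormula} gives $\Mbf(S(t)) \leq \int \Vert D^{T}F_{t}(x)\vec{T}(x)\Vert\dd\Vert T\Vert(x) \leq \int \Vert D^{T}F_{t}(x)\Vert^{k}\dd\Vert T\Vert(x)$, which the same convexity estimate bounds by $\int(\Vert D^{T}f\Vert^{k}+\Vert D^{T}g\Vert^{k})\dd\Vert T\Vert$.

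For the boundary quantities, $\partial S = \overline{H}_{*}(\partial R) = \delta_{1}\times g_{*}T - \delta_{0}\times f_{*}T - \overline{H}_{*}([\![0,1]\!]\times\partial T)$. The first two currents are carried by the time-slices $\{1\}$ and $\{0\}$, which miss $I^{\circ}$ for every interval $I\subset[0,1]$, so subadditivity of the variation gives $\Var(\partial S;I^{\circ}) = \Var(\overline{H}_{*}([\![0,1]\!]\times\partial T);I^{\circ})$; since this current has exactly the form of $S$ with $(T,k)$ replaced by $(\partial T,k-1)$, the variation estimate just proved delivers the boundary estimate. The pointwise bound on $\Mbf(\partial S(t))$ follows because the boundary formula for slices~\eqref{BdryFormula} and $S(t) = (F_{t})_{*}T$ give $(\partial S)(t) = -\partial(S(t)) = -(F_{t})_{*}\partial T$ for a.e.\ $t$, so~\eqref{LipPushFwdFormula} applies to $F_{t}$ and $\partial T$. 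Finally, $S\in\Nlip_{1+k}([0,1]\times\R^{d})$: the mass estimates with $\Vert D^{T}f\Vert\leq\Lip(f)$, $\Vert D^{T}g\Vert\leq\Lip(g)$ give $\Vert S\Vert_{\Lrm^{\infty}} + \Vert\partial S\Vert_{\Lrm^{\infty}} < \infty$; by~\eqref{PushFwdAC}, $\Vert S\Vert \ll \overline{H}_{\#}\Vert R\Vert$ with $\overline{H}_{\#}\Vert R\Vert(\{j\}\times\R^{d}) = (\mathscr{L}^{1}\mvert[0,1]\otimes\Vert T\Vert)(\{j\}\times\R^{d}) = 0$ for $j\in\{0,1\}$, whence $\tbf_{\#}\Vert S\Vert(\{0,1\}) = 0$; and since $\tbf_{\#}\Vert S\Vert$ and $\tbf_{\#}\Vert\overline{H}_{*}([\![0,1]\!]\times\partial T)\Vert$ are absolutely continuous with respect to $\mathscr{L}^{1}$, hence atomless, the interval estimates above --- being linear in $\mathscr{L}^{1}(I)$ --- force $t\mapsto\Var(S;(0,t))$ and $t\mapsto\Var(\partial S;(0,t))$ to be Lipschitz on $(0,1)$.

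The step I expect to be the main obstacle is the precise identification of the tangential differential $D^{R}\overline{H}$ of the product current, together with the justification that only the directional derivatives of $f$ and $g$ along $\mathrm{span}\,\vec{T}(x)$ --- and the trivial one along $e_{0}$ --- enter: one cannot invoke a classical differential of $f$ or $g$, and must instead combine~\eqref{PolarInBundle} with the defining property~\eqref{TangDiffDefn} of the tangential differential. Once~\eqref{LipPushFwdFormula} is in hand in this concrete form, everything else reduces to convexity of $s\mapsto s^{k}$ and standard multilinear-algebra inequalities.
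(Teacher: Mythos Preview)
Your proposal is correct and follows essentially the same route as the paper: both apply Lemma~\ref{lm:PushFwdVar} to $\overline{H}_{*}([\![0,1]\!]\times T)$, compute the tangential differential $D^{R}\overline{H}$ via~\eqref{PolarInBundle} and~\eqref{TangDiffDefn} exactly as you do (obtaining $(g-f)\wedge A_{t}\vec{T}$ for the spatial part), treat the boundary by the decomposition $\partial S = \delta_{1}\times g_{*}T - \delta_{0}\times f_{*}T - \overline{H}_{*}([\![0,1]\!]\times\partial T)$, and use~\cite[4.3.2]{Federer69book} to identify $S(t)=(F_{t})_{*}T$. Your convexity step $\Vert A_{t}\Vert^{k}\leq (1-t)\Vert D^{T}f\Vert^{k}+t\Vert D^{T}g\Vert^{k}$ is a mild sharpening of the paper's pointwise bound (the paper simply drops the $(1-t),t$ factors), and your verification that $S\in\Nlip_{1+k}$ spells out the atomlessness of $\tbf_{\#}\Vert S\Vert$ a bit more explicitly, but the arguments are otherwise the same.
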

	
	\begin{proof} Fix $a,b \in [0,1]$ with $[a,b] \subset [0,1]$. To estimate $\Var(S;[a,b])$, we shall appeal to Lemma~\ref{lm:PushFwdVar}. We can use~\cite[4.1.8]{Federer69book} and~\eqref{PolarInBundle} to deduce that $\Vert [\![0,1]\!] \times T \Vert = (\mathscr{L}^{1}\mvert[0,1]) \times \Vert T \Vert$ and that
		\begin{align}\label{ProdPolarInBundle}
			\mathrm{span}\hspace{0.2em}(e_{0} \wedge \vec{T}(x)) \subset V([\![0,1]\!] \times T,(t,x)) \hspace{2em} \text{for $\Vert T \Vert$-a.e.\ $x \in \R^d$, $\mathscr{L}^{1}$-a.e.\ $t \in [0,1]$}.
		\end{align}
		Here and in what follows, we identify $\vec{T}(x)$ with $i(\vec{T}(x))$ where $i: \R^d \to \R^{1+d}$ is given by $i(x) = (0,x)$ for $x \in \R^d$. It follows from~\eqref{TangDiffDefn} that
		\begin{align*}
			D^{[\![0,1]\!] \times T}\overline{H}(t,x)e_{0} = (1,g(x)-f(x)) \hspace{2em} \text{for $\Vert T \Vert$-a.e.\ $x \in \R^d$, $\mathscr{L}^{1}$-a.e.\ $t \in [0,1]$}.
		\end{align*}
		Notice that, for $x \in \R^d$, we have $(0,v) \in \mathrm{span}\hspace{0.2em}(e_{0} \wedge \vec{T}(x))$ whenever $v \in \mathrm{span}\hspace{0.2em}\vec{T}(x)$. We can use this fact together with~\eqref{PolarInBundle},~\eqref{ProdPolarInBundle}, and~\eqref{TangDiffDefn} to deduce that, for $\Vert T \Vert$-a.e.\ $x \in \R^d$, $\mathscr{L}^{1}$-a.e.\ $t \in [0,1]$, and $v \in \mathrm{span}\hspace{0.2em}\vec{T}(x)$,
		\begin{align*}
			D^{[\![0,1]\!] \times T}\overline{H}(t,x)(0,v) = (0,(1-t)D^{T}f(x)v+tD^{T}g(x)v).
		\end{align*}
		It follows that, for $\Vert T \Vert$-a.e.\ $x \in \R^d$ and $\mathscr{L}^{1}$-a.e.\ $t \in [0,1]$,
		\begin{align*}
			\pbf (D^{[\![0,1]\!] \times T}\overline{H}(t,x)(e_{0} \wedge 
			\vec{T}(x))) = (g(x) - f(x)) \wedge ((1-t)D^{T}f(x)+tD^{T}g(x))\vec{T}(x)
		\end{align*}
		and we can estimate
		\begin{align*}
			\Vert \pbf (D^{[\![0,1]\!] \times T}\overline{H}(t,x)(e_{0} \wedge 
			\vec{T}(x))) \Vert \leq \vert g(x) - f(x)\vert \cdot (\Vert D^{T}f(x) \Vert^{k} + \Vert D^{T}g(x) \Vert^{k}).
		\end{align*}
		The desired estimate of $\Var(\overline{H}_{*}S;[a,b])$ now follows from Lemma~\ref{lm:PushFwdVar}. Now observe that
		\begin{align*}
			\partial S = \delta_{1} \times g_{*}T - \delta_{0} \times f_{*}T - \overline{H}_{*}([\![0,1]\!] \times \partial T).
		\end{align*}
		Since $0,1 \not\in (a,b)$, we have
		\begin{align*}
			\Var(\partial S;(a,b)) = \Var(\overline{H}_{*}([\![0,1]\!] \times \partial T);(a,b)).
		\end{align*}
		We can then work as above to deduce the estimate on $\Var(\partial S;(a,b))$. For the estimate on $\Mbf(S(t))$, notice that we can use~\cite[4.3.2 (7)]{Federer69book} to deduce that $S(t) = (H(t,\cdot))_{*}T$ for $\mathscr{L}^{1}$-a.e.\ $t \in [0,1]$. We can then use~\eqref{LipPushFwdFormula} to deduce that
		\begin{align*}
			\Mbf(S(t)) \leq \int \Vert D^{T}H(t,\cdot)(x) \Vert^{k} \dd \Vert T \Vert(x).
		\end{align*}
		One can use~\eqref{TangDiffDefn} to deduce that, for $\Vert T \Vert$-a.e.\ $x \in \R^d$ and $v \in V(\Vert T \Vert,x)$, 
		\begin{align*}
			D^{T}H(t,\cdot)(x) = (1-t)D^{T}f(x) + tD^{T}g(x).
		\end{align*}
		The estimate on $\Mbf(S(t))$ follows. For the estimate on $\Mbf(\partial S(t))$, we use the boundary formula~\eqref{BdryFormula} and work as above. Finally, notice that $\Vert S \Vert \ll \overline{H}_{\#}\Vert [\![0,1]\!] \times T \Vert$ by~\eqref{PushFwdAC} and that 
		\begin{align*}
			\tbf_{\#}\overline{H}_{\#}\Vert [\![0,1]\!] \times T \Vert(\{0,1\}) = \mathscr{L}^{1}(\{0,1\}) \cdot \Mbf(T) = 0.
		\end{align*}
		It follows that $\tbf_{\#}\Vert S \Vert(\{0,1\}) = 0$ and thus $S \in \Nlip_{1+k}([0,1] \times \R^d)$. 
	\end{proof}
	
	We shall also need to estimate the variation when the pushforward is ``space-like'':
	
	\begin{proposition}\label{pp:SpaceRescale} Let $f: \R^d \to \R^d$ be Lipschitz, and let $\overline{f} = \mathrm{id}_{\R} \times f$. Let $S \in \Nrm_{1+k}(\R^{1+d})$. Then, for all $\mathscr{L}^{1}$-measurable $I \subset \R$, we have
	\begin{align*}
		\Var(\overline{f}_{*}S;I) &\leq \Lip(f)^{k+1}\Var(S;I) \\
		\Var(\partial\overline{f}_{*}S;I) &\leq \Lip(f)^{k}\Var(\partial S;I).
	\end{align*}
	Furthermore, for $\mathscr{L}^{1}$-a.e.\ $t \in \R$, the following mass estimates hold:
	\begin{align*}
		\Mbf(\overline{f}_{*}S(t)) &\leq \Lip(f)^{k}\Mbf(S(t)) \\
		\Mbf(\partial\overline{f}_{*}S(t)) &\leq \Lip(f)^{k-1}\Mbf(\partial S(t)).
	\end{align*}
	Let $\sigma,\tau \in \R$ with $\sigma < \tau$. If we additionally have $S \in \Nlip_{1+k}([\sigma,\tau] \times \R^d)$, then also $\overline{f}_{*}S \in \Nlip_{1+k}([\sigma,\tau] \times \R^d)$.
	\end{proposition}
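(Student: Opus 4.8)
The plan is to deduce the variation bounds from Lemma~\ref{lm:PushFwdVar} together with a pointwise analysis of the tangential differential of the product map $\overline{f} = \mathrm{id}_{\R} \times f$. Applying Lemma~\ref{lm:PushFwdVar} to the Lipschitz map $\overline{f}$ gives, for every interval $I \subset \R$,
\begin{align*}
	\Var(\overline{f}_{*}S;I) \leq \int_{I \times \R^d} \Vert (\pbf \circ D^{S}\overline{f}(t,x))\vec{S}(t,x) \Vert \dd \Vert S \Vert(t,x),
\end{align*}
so the first asserted inequality follows once I establish the pointwise estimate $\Vert (\pbf \circ D^{S}\overline{f}(t,x))\vec{S}(t,x) \Vert \leq \Lip(f)^{k+1} \Vert \pbf\vec{S}(t,x) \Vert$ for $\Vert S \Vert$-a.e.\ $(t,x)$ and integrate against $\Vert S \Vert$.

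For the pointwise estimate I would argue entirely through the directional derivatives of~\eqref{TangDiffDefn}, avoiding the full differential of $f$ (which need not exist $\Vert S \Vert$-a.e.). Fix $(t,x)$ at which $D^{S}\overline{f}(t,x)$ exists and at which $\vec{S}(t,x) \in \Wedge_{1+k}V(\Vert S \Vert,(t,x))$, legitimate by~\eqref{PolarInBundle}. Given $w = (s,v) \in V(\Vert S \Vert,(t,x))$ one has $\overline{f}((t,x)+hw) = (t+hs, f(x+hv))$, so~\eqref{TangDiffDefn} forces $D^{S}\overline{f}(t,x)(s,v) = (s, L_{(t,x)}v)$, where $L_{(t,x)}v := \lim_{h \to 0} h^{-1}(f(x+hv) - f(x))$ is linear in $v = \pbf w$ with $\Vert L_{(t,x)} \Vert \leq \Lip(f)$. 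Hence $\pbf \circ D^{S}\overline{f}(t,x) = L_{(t,x)} \circ \pbf$ as linear maps on $V(\Vert S \Vert,(t,x))$; taking $(1+k)$-th exterior powers and evaluating at $\vec{S}(t,x)$ gives $(\pbf \circ D^{S}\overline{f}(t,x))\vec{S}(t,x) = \Wedge_{1+k}(L_{(t,x)})(\pbf\vec{S}(t,x))$. Since the $(1+k)$-th exterior power of a linear map of Euclidean operator norm $\leq \Lip(f)$ has mass-to-mass operator norm $\leq \Lip(f)^{1+k}$ (a standard fact, checkable by duality with the comass), the pointwise estimate follows. The boundary estimate is then immediate: $\partial\overline{f}_{*}S = \overline{f}_{*}\partial S$ with $\partial S \in \Nrm_{k}(\R^{1+d})$, so applying the same argument to the normal $k$-current $\partial S$ yields $\Var(\partial\overline{f}_{*}S;I) \leq \Lip(f)^{k}\Var(\partial S;I)$.

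For the slice estimates I would use that $\overline{f}$ fixes the temporal coordinate, $\tbf \circ \overline{f} = \tbf$, together with $\pbf \circ \overline{f} = f \circ \pbf$. By~\cite[4.3.2]{Federer69book}, the first identity gives $(\overline{f}_{*}S)\vert_{t} = \overline{f}_{*}(S\vert_{t})$ for $\mathscr{L}^{1}$-a.e.\ $t$, hence $\overline{f}_{*}S(t) = \pbf_{*}\overline{f}_{*}(S\vert_{t}) = f_{*}(S(t))$; the claimed bound is then the standard pushforward estimate $\Mbf(f_{*}R) \leq \Lip(f)^{\dim R}\Mbf(R)$, read off from~\eqref{LipPushFwdFormula}. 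Applying the same reasoning to $\partial S$ (with $\partial\overline{f}_{*}S(t) = f_{*}(\partial S(t))$ and $\partial S(t)$ a $(k-1)$-current) yields the estimate for $\partial\overline{f}_{*}S(t)$.

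Finally, to check $\overline{f}_{*}S \in \Nlip_{1+k}([\sigma,\tau] \times \R^d)$ when $S$ is, I would verify the four defining conditions: the two $\Lrm^{\infty}$ bounds are finite by the slice estimates just obtained together with $S \in \Nlip_{1+k}$; since $\Vert\overline{f}_{*}S\Vert \ll \overline{f}_{\#}\Vert S\Vert$ by~\eqref{PushFwdAC} and $\tbf \circ \overline{f} = \tbf$, we get $\tbf_{\#}\Vert\overline{f}_{*}S\Vert \ll \tbf_{\#}\Vert S\Vert$, which vanishes on $\{\sigma,\tau\}$, and likewise for $\partial\overline{f}_{*}S = \overline{f}_{*}\partial S$; and for the two Lipschitz-in-time conditions, additivity of the variation over adjacent intervals combined with the estimates already proved gives, for $\sigma < s < t < \tau$,
\begin{align*}
	\Var(\overline{f}_{*}S;(\sigma,t)) - \Var(\overline{f}_{*}S;(\sigma,s)) &= \Var(\overline{f}_{*}S;(s,t)) \\
	&\leq \Lip(f)^{k+1}\Var(S;(s,t)) \leq \Lip(f)^{k+1}\Lip(S)\,(t-s),
\end{align*}
and similarly for the boundary variation. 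I expect the only genuinely delicate step to be the pointwise identity $\pbf \circ D^{S}\overline{f}(t,x) = L_{(t,x)} \circ \pbf$, which must be extracted from the tangential derivatives in~\eqref{TangDiffDefn} and the inclusion~\eqref{PolarInBundle} alone, since $\Vert S\Vert$ need not be a product measure and $f$ need not be differentiable $\Vert S\Vert$-a.e.; once this is in place, the remainder is bookkeeping with the functorial properties of pushforward and slicing.
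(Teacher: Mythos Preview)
Your proposal is correct and follows essentially the same approach as the paper: both apply Lemma~\ref{lm:PushFwdVar}, establish the pointwise factorisation $\pbf \circ D^{S}\overline{f}(t,x) = L_{(t,x)} \circ \pbf$ on the decomposability bundle (the paper writes $L_{(t,x)}$ as $g(t,x)$ and extends it via an orthogonal projection before taking exterior powers, whereas you work directly on $\pbf(V(\Vert S\Vert,(t,x)))$), and then handle the slice and $\Nlip$ statements identically via~\cite[4.3.2]{Federer69book} and~\eqref{PushFwdAC}. Your treatment of the final $\Nlip$ verification is slightly more explicit than the paper's, but there is no substantive difference.
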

	
	\begin{proof} Fix $I \subset \R$. For $\Vert S \Vert$-a.e.\ $(t,x) \in \R^d$, we can use~\eqref{TangDiffDefn} and the fact that $\pbf \circ \overline{f} = f \circ \pbf$ to deduce that
	\begin{align}\label{SpaceRescale1}
		\pbf (D^{S}\overline{f}(t,x)v) = \lim_{h \to 0} \frac{f(x+h\pbf v)-f(x)}{h} \hspace{2em} \text{for $v \in V(\Vert S \Vert,(t,x))$}, 
	\end{align}
    We now claim that, for $\Vert S \Vert$-a.e.\ $(t,x) \in \mathbb{R}^{d}$, the following estimate holds:
    \begin{align}\label{SpaceRescale2}
		\Vert \pbf (D^{S}\overline{f}(t,x)\vec{S}(t,x)) \Vert \leq \Lip(f)^{k+1}\Vert \pbf\vec{S}(t,x) \Vert.
	\end{align}
    To see this, take $(t,x) \in \mathbb{R}^{1+d}$ and suppose that $D^{S}\overline{f}(t,x)$ exists. We can then define a linear map
    $g(t,x): \pbf(V(\Vert S \Vert,(t,x))) \to \mathbb{R}^{d}$ where
    \begin{align*}
        g(t,x)(\pbf v) = \pbf(D^{S}\overline{f}(t,x)v) \hspace{2em} \text{for $v \in V(\Vert S \Vert,(t,x))$}.
    \end{align*}
	This is well-defined by \eqref{SpaceRescale1} and we can estimate as follows:
	\begin{align*}
		\vert g(\pbf v) \vert \leq \Lip(f)\cdot\vert \pbf v \vert \hspace{2em} \text{for $v \in V(\Vert S \Vert,(t,x))$}.
	\end{align*}
    Let $\pi(t,x): \mathbb{R}^{d} \to \pbf(V(\Vert S \Vert,(t,x)))$ be the orthogonal projection map, and notice that
    \begin{align*}
		\vert (g(t,x) \circ \pi(t,x))(w) \vert \leq \Lip(f)\cdot\vert w \vert \hspace{2em} \text{for $w \in \R^{d}$}.
	\end{align*}
    It follows that
    \begin{align}\label{SpaceRescale3}
        \Vert (g(t,x) \circ \pi(t,x))(\xi)\Vert \leq \Lip(f)^{k+1}\Vert \xi \Vert \hspace{2em} \text{for $\xi \in \Wedge_{k+1}\mathbb{R}^{d}$}.
    \end{align}
    We obtain \eqref{SpaceRescale2} by using \eqref{SpaceRescale3} with $\xi = \pbf\vec{S}(t,x)$ and appealing to \eqref{PolarInBundle}. The desired estimate of $\Var(\overline{f}_{*}S;I)$ now follows from \eqref{SpaceRescale2} and Lemma~\ref{lm:PushFwdVar}. Since $\partial \overline{f}_{*}S = \overline{f}_{*}\partial S$, we can work as above to yield the estimate of $\Var(\partial\overline{f}_{*}S;I)$. One can use~\cite[4.3.2 (7)]{Federer69book} to show that $\overline{f}_{*}S(t) = f_{*}(S(t))$ for $\mathscr{L}^{1}$-a.e.\ $t \in \R$, from which the desired mass estimate follows. The boundary mass estimate follows in the same way after using the boundary formula~\eqref{BdryFormula}. Now suppose that $S \in \Nlip_{1+k}([\sigma,\tau] \times \R^d)$. Notice that $\Vert \overline{f}_{*}S \Vert \ll \overline{f}_{\#}\Vert S \Vert$ by~\eqref{PushFwdAC} and we have
		\begin{align*}
			\tbf_{\#}\overline{f}_{\#}\Vert S \Vert(\{\sigma,\tau\}) = \tbf_{\#}\Vert S \Vert(\{\sigma,\tau\}) = 0.
		\end{align*}
		The estimates shown above then ensure that $\overline{f}_{*}S \in \Nlip_{1+k}([\sigma,\tau] \times \R^d)$.
	\end{proof}
	
	\subsection{A ``Pythagoras'' Lemma} Let $S \in \Nrm_{1+k}(\R^{1+d})$ and let $I \subset \R$ be $\mathscr{L}^{1}$-measurable. We now investigate the relationship between $\Mbf(S \mvert I \times \R^d)$, $\Var(S;I)$, and $\Vert S \Vert_{\Lrm^{\infty}(I)}$. First notice that
	\begin{align}\label{VarByMass}
		\Var(S;I) \leq \Mbf(S \mvert I \times \R^d) \leq \Mbf(S).
	\end{align}
    One cannot expect a similar estimate for $\Vert S \Vert_{\Lrm^{\infty}(I)}$; to see this, take $T \in \Nrm_{k}(\R^{d})$ and notice that, for $\eta > 0$,
    \begin{align*}
        \Vert [\![0,\eta]\!] \times T \Vert_{\Lrm^{\infty}} &= \Mbf(T), & \Mbf([\![0,\eta]\!]\times T) &= \eta\cdot\Mbf(T). 
    \end{align*}
    On the other hand, for $\omega \in \Dcal^{1+k}(\R^d)$, we have
	\begin{align*}
		\pbf_{*}(S \mvert I \times \R^d)(\omega) = \int_{I \times \R^d} \langle\pbf\vec{S}(t,x),\omega(x)\rangle \dd \Vert S \Vert(t,x) \leq \int_{I \times \R^d} \Vert \pbf\vec{S} \Vert \dd \Vert S \Vert,
	\end{align*}
	and so
	\begin{align}\label{pMassByVar}
		\Mbf(\pbf_{*}(S \mvert I \times \R^d)) \leq \Var(S;I).
	\end{align}
    Clearly, one cannot control $\Mbf(S \mvert I \times \R^{d})$ by $\Var(S;I)$ or $\Vert S \Vert_{\Lrm^{\infty}}$ alone; for the variation, it suffices to consider $[\![0,1]\!] \times T$ where $T \in \Nrm_{k}(\R^{d})$ and $T \neq 0$ and, for the $\Lrm^{\infty}$ norm, it suffices to consider $\delta_{0} \times R$, where $R \in \Nrm_{k+1}(\R^{d})$ and $R \neq 0$. However, if one uses both the variation and the $\Lrm^{\infty}$ norm then one can obtain an estimate. The following is a generalisation of the ``Pythagoras'' lemma of~\cite[Lemma 3.5]{Rindler23}:
	
	\begin{lemma}\label{lm:Pythagoras} Let $S \in \Nrm_{1+k}(\R^{1+d})$. Then 
	\begin{align}\label{Pythagoras}
		\vert \vec{S}(t,x) \mvert \mathrm{d}t \vert^{2} + \vert \pbf\vec{S}(t,x) \vert^{2} &= \vert \vec{S}(t,x) \vert^{2} & & \text{for $\Vert S \Vert$-a.e.\ $(t,x) \in \R^{1+d}$},
	\end{align}
	and, for all $\mathscr{L}^{1}$-measurable $I \subset \R$,
	\begin{align}\label{MassByVarLInfty}
		\Vert S \Vert(I \times \R^d) \leq \int_{I} \Mbf(S(t)) \dd t + \Var(S;I) \leq \mathscr{L}^{1}(I) \cdot \Vert S \Vert_{\Lrm^{\infty}(I)} + \Var(S;I).
	\end{align}
	\end{lemma}
	
	\begin{proof} Take $\xi \in \Wedge_{1+k}\R^{1+d}$. We can write $\xi$ in coordinates as follows:
		\begin{align*}
			\xi = \sum_{I \in \Lambda(k,d)} \xi_{0,I} e_{0} \wedge e_{I} + \sum_{I \in \Lambda(1+k,d)} \xi_{I}e_{I}.
		\end{align*}
		Notice that
		\begin{align*}
			\xi \mvert \mathrm{d}t &= \sum_{I \in \Lambda(k,d)} \xi_{0,I}e_{I}, &
			\pbf\xi &= \sum_{I \in \Lambda(1+k,d)} \xi_{I}e_{I}.
		\end{align*}
		We can then use orthogonality to deduce that
		\begin{align}
			\vert \xi \mvert \mathrm{d}t \vert^{2} + \vert \pbf\xi \vert^{2} = \vert \xi \vert^{2}.
		\end{align}
		We can apply this pointwise to $\vec{S}$ to obtain~\eqref{Pythagoras}.

        On the other hand, we can estimate
        \begin{align}\label{PythagorasEst0}
            \Vert \xi \Vert \leq \Vert e_{0} \wedge( \xi \mvert \mathrm{d}t) \Vert + \bigg\Vert \sum_{I \in \Lambda(1+k,d)} \xi_{I}e_{I} \bigg\Vert = \Vert \xi \mvert \mathrm{d}t \Vert + \Vert \pbf\xi \Vert.
        \end{align}
        Let $I \subset \R$ be $\mathscr{L}^{1}$-measurable. We obtain \eqref{MassByVarLInfty} by applying \eqref{PythagorasEst0} pointwise to $\vec{S}$, integrating this over $I \times \R^{d}$ with respect to $\Vert S \Vert$, and applying the coarea formula.
	\end{proof}

	\subsection{Compactness and Continuity} The following compactness result is the analogue of~\cite[Theorem 3.7]{Rindler23} in our setting:

	\begin{proposition}\label{pp:SpaceTimeCpct} Fix $\sigma,\tau \in \R$ with $\sigma < \tau$ and let $K \subset \R^{d}$ be compact. Let $(S_{j})_{j} \subset \Nrm_{1+k}([\sigma,\tau] \times K)$ and suppose that there is $M > 0$ with 
		\begin{align}\label{SpaceTimeCpctBounds}
			\Vert S_{j} \Vert_{\Lrm^{\infty}} + \Vert \partial S_{j} \Vert_{\Lrm^{\infty}} + \Var(S_{j}) + \Var(\partial S_{j}) \leq M \hspace{1em} \text{for $j \in \N$}.
		\end{align}
		Then there is $S \in \Nrm_{1+k}([\sigma,\tau] \times K)$, a (not relabelled) subsequence of $(S_{j})_{j}$, and a countable set $N \subset [\sigma,\tau]$ with 
		\begin{align*}
			S_{j} &\wkto S & &\text{and} & S_{j}(t) &\wkto S(t) \hspace{1em} \text{for all $t \in [\sigma,\tau]\backslash N$}.
		\end{align*}
		Furthermore, 
		\begin{align*}
			\Var(S) &\leq \liminf_{j \to \infty} \Var(S_{j}), & \Vert S \Vert_{\Lrm^{\infty}} &\leq \liminf_{j \to \infty} \Vert S_{j} \Vert_{\Lrm^{\infty}}, \\
			\Var(\partial S) &\leq \liminf_{j \to \infty} \Var(\partial S_{j}), & \Vert \partial S \Vert_{\Lrm^{\infty}} &\leq \liminf_{j \to \infty} \Vert \partial S_{j} \Vert_{\Lrm^{\infty}}.
		\end{align*}
		If we additionally have $\{S_{j}\} \subset \Nlip_{1+k}([\sigma,\tau] \times K)$ and $(\Lip(S_{j}))_{j}$ is uniformly bounded, then
		\begin{align*}
			S &\in \Nlip_{1+k}([\sigma,\tau] \times K) & &\text{and} & S_{j}(t) &\wkto S(t) \hspace{1em} \text{for all $t \in [\sigma,\tau]$}.
		\end{align*}
		Furthermore, we have
		\begin{align}\label{LipLSC}
			\Lip(S) \leq \liminf_{j \to \infty} \Lip(S_{j}).
		\end{align}
	\end{proposition}

    The proof of Proposition \ref{pp:SpaceTimeCpct} is very similar to that of \cite[Theorem 3.7]{Rindler23}, but relies upon generalisations of other results from \cite{Rindler23} which are established above. For this reason, we give the proof below:
    
    \begin{proof}[Proof of Proposition \ref{pp:SpaceTimeCpct}] First notice that, for $j \in \N$, we can use Lemma \ref{lm:Pythagoras} and \eqref{SpaceTimeCpctBounds} to estimate
    \begin{align*}
        \Mbf(S_{j})+\Mbf(\partial S_{j}) \leq \max\{1,\vert \tau - \sigma \vert\} \cdot M.
    \end{align*}
    It follows that there is $S \in \Nrm_{1+k}([\sigma,\tau] \times K)$ and a (not relabelled) subsequence of $(S_{j})_{j}$ with $S_{j} \wkto S$. The lower semicontinuity of the variation then follows from Reshetnyak's lower semicontinuity theorem (see \cite[Theorem 2.38]{AmbrosioFuscoPallara00book}).
    
    After possibly passing to another subsequence, we can assume without loss of generality that $\Vert S_{j} \Vert + \Vert \partial S_{j} \Vert \wkto \mu$ for some $\mu \in \Mcal^{+}(\R^{1+d})$. Fix $t \in [\sigma,\tau]$ with $\mu(\{t\} \times \R^{d})=0$. We can use \cite[Theorem 1.62(b)]{AmbrosioFuscoPallara00book} to deduce that, for $\omega \in \Dcal^{k}(\R^{1+d})$,
    \begin{equation}\label{SpaceTimeCpct1}
    \begin{aligned}
        S_{j}(\mathbf{1}_{(-\infty,t) \times \R^{d}}\cdot\mathrm{d}\omega) &\to S(\mathbf{1}_{(-\infty,t) \times \R^{d}}\cdot\mathrm{d}\omega), \\
        \partial S_{j}(\mathbf{1}_{(-\infty,t) \times \R^{d}}\cdot\omega) &\to \partial S(\mathbf{1}_{(-\infty,t) \times \R^{d}}\cdot\omega), \\
        S_{j}(\mathbf{1}_{(t,\infty) \times \R^{d}}\cdot\mathrm{d}\omega) &\to S(\mathbf{1}_{(t,\infty) \times \R^{d}}\cdot\mathrm{d}\omega), \\ 
        \partial S_{j}(\mathbf{1}_{(t,\infty) \times \R^{d}}\cdot\omega) &\to \partial S(\mathbf{1}_{(t,\infty) \times \R^{d}}\cdot\omega).
    \end{aligned}
    \end{equation}
    Let $N \subset [\sigma,\tau]$ be the set of points for which $\mu(\{t\} \times \R^{d}) > 0$ , It follows from \eqref{SpaceTimeCpct1} and Proposition \ref{pp:CylinderFormula} that $S_{j}\vert_{t} \wkto S\vert_{t}$, and therefore $S_{j}(t) \wkto S(t)$, for $t \in [\sigma,\tau]\backslash N$. One can then deduce the lower semicontinuity of the $\Lrm^{\infty}$ norms.

    Now suppose additionally that $(S_{j})_j \subset \Nlip_{1+k}([\sigma,\tau] \times K)$ and that there is $L > 0$ such that $\Lip(S_{j}) \leq L$ for $j \in \N$. Take $s,t \in [\sigma,\tau]$ with $s < t$. For $j \in \N$, we can use Lemma \ref{lm:Pythagoras} to estimate
    \begin{align*}
        \Vert S_{j} \Vert([s,t] \times \R^{d}) + \Vert \partial S_{j} \Vert([s,t] \times \R^{d}) &\leq (M+L)\cdot\vert t-s \vert.
    \end{align*}
    It follows that $\mu(\{t\} \times \R^{d}) = 0$ for \textit{all} $t \in [\sigma,\tau]$ and so $N = \varnothing$. In particular, we have $S_{j}(t) \wkto S(t)$ for all $t \in [\sigma,\tau]$. Finally, for $s,t \in [\sigma,\tau]$ with $s < t$, we can again appeal to Reshetnyak's lower semicontinuity theorem to deduce that
    \begin{align*}
        \Var(S;[s,t]) + \Var(\partial S;[s,t]) &\leq \liminf_{j \to \infty} \Var(S_{j};[s,t]) + \Var(\partial S_{j};[s,t]) \\
        &\leq \liminf_{j \to \infty} \Lip(S_{j}) \cdot \vert t-s \vert.
    \end{align*}
    In particular, $S \in \Nlip_{1+k}([\sigma,\tau] \times \R^{d})$ and \eqref{LipLSC} holds.
    \end{proof}
 
	\begin{corollary}\label{cr:SliceWkCty} Fix $\sigma,\tau \in \R$ with $\sigma < \tau$ and let $K \subset \R^{d}$ be compact. Let $(S_{j})_{j} \subset \Nrm_{1+k}([\sigma,\tau] \times K)$ and let $S \in \Nrm_{1+k}([\sigma,\tau] \times K)$. Suppose that $S_{j} \wkto S$ and that there is $M > 0$ with 
	\begin{align*}
        \Mbf(S_{j})+\Mbf(\partial S_{j}) \leq M
    \end{align*}
    for all $j \in \N$. Then there is a (not relabelled) subsequence of $(S_{j})_{j}$ such that $S_{j}(t) \wkto S(t)$ for $\mathscr{L}^{1}$-a.e.\ $t \in [\sigma,\tau]$.
	\end{corollary}
	
    \begin{remark} Under the assumptions of Corollary \ref{cr:SliceWkCty}, one cannot expect that $S_{j}(t) \wkto S(t)$ for $\mathscr{L}^{1}$-a.e. $t \in [\sigma,\tau]$ along the \textit{full} sequence. To see this, consider all dyadic intervals in $[0,1]$ listed by generation and, for $j \in \N$, let $S_j \in \Irm_1([0,1])$ be the 1-current associated with the $j$-th such interval.  
    Notice that $\Mbf(S_{j}) \to 0$, but one can show that $(S_{j}(t))_{j}$ does not converge for any $t \in [0,1]$. This behaviour is analogous to the fact that convergent sequences in $\Lrm^{1}([0,1])$ may not converge $\mathscr{L}^{1}$-a.e.
    \end{remark}
	
	\section{Space-Time Trajectories and the Dynamical Deformation Theorem}\label{s:dynamical} The deformation theorem (see Section~\ref{s:Approx}) is distinguished among approximation theorems in that the approximant $P \in \Prm_{k}(\R^d)$ of $T \in \Nrm_{k}(\R^d)$ is constructed from pushforwards of $T$. In fact, we have a decomposition $T = P + \partial R$ for some $R \in \Nrm_{k+1}(\R^d)$ where $R$ is (up to technical details) provided by the homotopy formula (see \eqref{NaiveDeformation}). Although the fact that $P$ is a deformation of $T$ is witnessed by $R$, $R$ does not show the ``history'' of a deformation as there may be cancellations. To overcome this, we shall instead construct a ``space-time trajectory'' $S \in \Nrm_{1+k}([0,1] \times \R^d)$ with $\partial S = \delta_{1} \times P - \delta_{0} \times T$ and estimate the variation of $S$. We start by developing the theory of space-time trajectories.
	
	\subsection{Space-Time Trajectories} A \textbf{space-time trajectory} is a space-time normal current $S \in \Nrm_{1+k}([0,1] \times \R^d)$ with $\partial S \mvert (0,1) \times \R^d = 0$. To motivate this definition, let $S \in \Nrm_{1+k}([0,1] \times \R^d)$ be a space-time trajectory and notice that
	\begin{align}
		\partial S = \partial S \mvert \{0\} \times \R^d + \partial S \mvert \{1\} \times \R^d.
	\end{align}
	We define
	\begin{align*}
		\partial^{+}S &:= \pbf_{*}(\partial S \mvert \{1\} \times \R^d), \\
		\partial^{-}S &:= -\pbf_{*}(\partial S \mvert \{0\} \times \R^d), 
	\end{align*}
	so that
	\begin{align}\label{TrajDeform}
		\partial S = \delta_{1} \times \partial^{+}S - \delta_{0} \times \partial^{-}S.
	\end{align}
	In particular, $S$ describes a ``trajectory'' from $\partial^{-}S$ to $\partial^{+}S$.  Taking the boundary of~\eqref{TrajDeform} yields $0 = \delta_{1} \times \partial(\partial^{+}S) - \delta_{0} \times \partial(\partial^{-}S)$ and so $\partial(\partial^{-}S) = \partial(\partial^{+}S) = 0$.
	
	We say that $S$ is a \textbf{Lipschitz trajectory} if we also have $S \in \Nlip_{1+k}([0,1] \times \R^d)$; notice that this is the case if and only if $\Vert S \Vert_{\Lrm^{\infty}} < \infty$, $\tbf_{\#}\Vert S \Vert(\{0,1\}) = 0$, and the map $t \mapsto \Var(S;(0,t))$ is Lipschitz on $(0,1)$. The following result relates $\Mbf(\partial^{-}S)$ and $\Mbf(\partial^{+}S)$ to $\Vert S \Vert_{\Lrm^{\infty}}$ whenever $S$ is a Lipschitz trajectory:

    \begin{lemma}\label{lm:SlicesByBdry} Let $S \in \Nlip_{1+k}([0,1] \times \R^{d})$ and suppose that $\partial S \mvert (0,1) \times \R^{d} = 0$. Then
    \begin{align*}
        \max\{\Mbf(\partial^{-}S),\Mbf(\partial^{+}S)\} \leq \Vert S \Vert_{\Lrm^{\infty}}.
    \end{align*}
    \end{lemma}
    
    \begin{proof} We shall show that $\Mbf(\partial^{-}S) \leq \Vert S \Vert_{\Lrm^{\infty}}$; the inequality $\Mbf(\partial^{+}S) \leq \Vert S \Vert_{\Lrm^{\infty}}$ is similar. Suppose, for a contradiction, that $\Mbf(\partial^{-}S) > \Vert S \Vert_{\Lrm^{\infty}}$. Then there is $N \subset [0,1]$ with $\mathscr{L}^{1}(N) = 0$ and a $\delta > 0$ such that 
    \begin{align}\label{SlicesByBdry0}
        \Mbf(S\vert_{t}) < \Mbf(\partial^{-}S) - \delta \hspace{1em} \text{for $t \in [0,1] \setminus N$}.
    \end{align}
    We can use the remark following Proposition \ref{pp:CylinderFormula} to find a sequence $(t_{j})_{j} \subset [0,1] \backslash N$ with $t_{j} \to 0$ such that, for $j \in \N$,
    \begin{align}\label{SlicesByBdry1}
        S\vert_{t_{j}} = \partial(S \mvert (-\infty,t_{j}) \times \R^{d}) - \partial S \mvert (-\infty,t_{j}) \times \R^{d}.
    \end{align}
    Take $j \in \N$. It follows from \eqref{SlicesByBdry1} that
    \begin{align}\label{SlicesByBdry2}
        S(t_{j}) - \partial^{-}S = \pbf_{*}\partial(S \mvert (-\infty,t_{j}) \times \R^{d}).
    \end{align}
    We can use Lemma \ref{lm:Pythagoras} to estimate
    \begin{align*}
        \Mbf(S \mvert (-\infty,t_{j}) \times \R^{d}) &\leq t_{j} \cdot \Vert S \Vert_{\Lrm^{\infty}} + \Var(S;[0,t_{j}]) \\
        &\leq t_{j} \cdot(\Vert S \Vert_{\Lrm^{\infty}} + \Lip(S)).
    \end{align*}
    It follows that $\Mbf(S \mvert (-\infty,t_{j}) \times \R^{d}) \to 0$ and so we can use \eqref{SlicesByBdry2} to deduce that $S(t_{j}) \wkto \partial^{-}S$. In particular, we have
    \begin{align*}
        \Mbf(\partial^{-}S) \leq \liminf_{j \to \infty} \Mbf(S(t_{j})),
    \end{align*}
    which contradicts~\eqref{SlicesByBdry0}.
    \end{proof}
	
	We now define various operations on space-time trajectories (in analogy with~\cite{Rindler23}) that will be useful in the sequel. 
	
	Let $S_{1},S_{2} \in \Nrm_{1+k}([0,1] \times \R^d)$ and suppose that $\partial^{-}S_{2} = \partial^{+}S_{1}$. We define the \textbf{concatenation} of $S_{1}$ and $S_{2}$ as
	\begin{align*}
		S_{2} \circ S_{1} := (a_{1})_{*}S_{1} + (a_{2})_{*}S_{2}
	\end{align*}
	where, for $i \in \{1,2\}$, $a_{i}: [0,1] \to [0,1]$ is given by 
	\begin{align*}
		a_{i}(t) &:= \frac{i-1}{2}+\frac{t}{2}, & t \in [0,1].
	\end{align*}
	Notice that $\partial^{-}(S_{2} \circ S_{1}) = \partial^{-}S_{1}$ and $\partial^{+}(S_{2} \circ S_{1}) = \partial^{+}S_{2}$. We can use Propositions~\ref{pp:VarRescale} and~\ref{pp:LInftyRescale} to deduce that
	\begin{align*}
		\Var(S_{2} \circ S_{1}) &\leq \Var(S_{1}) + \Var(S_{2}) & \Vert S_{2} \circ S_{1} \Vert_{\Lrm^{\infty}} &= \max\{\Vert S_{1} \Vert_{\Lrm^{\infty}},\Vert S_{2} \Vert_{\Lrm^{\infty}}\}.
	\end{align*}
    Furthermore, if $\tbf_{\#}\Vert S_{1} \Vert(\{1\}) = \tbf_{\#}\Vert S_{2} \Vert(\{0\}) = 0$, then $\tbf_{\#}\Vert S_{2} \circ S_{1} \Vert(\{1/2\}) = 0$ and so
    \begin{align*}
        \Var(S_{2} \circ S_{1}) &= \Var(S_{2} \circ S_{1};[0,1/2]) + \Var(S_{2} \circ S_{1};[1/2,1]) \\
        &= \Var((a_{2})_{*}S_{2}) + \Var((a_{1})_{*}S_{1}) \\
        &= \Var(S_{2})+\Var(S_{1}).
    \end{align*}
	One can also show that, if $S_{1}$ and $S_{2}$ are Lipschitz trajectories, then $S_{2} \circ S_{1}$ is also a Lipschitz trajectory. Now, if $j \in \N$ with $j > 1$ and $\{S_{i}\}_{i=1}^{j}$ is a set of space-time trajectories with $\partial^{-}S_{i+1}=\partial^{+}S_{i}$ for $i \in \{1,\dotsc,j-1\}$, we define the concatenation of $\{S_{i}\}_{i=1}^{j}$ as
	\begin{align*}
		S_{j} \circ \cdots \circ S_{1} := \sum_{i=1}^{j} (a^{(j)}_{i})_{*}S_{i}
	\end{align*}
	where, for $i \in \{1,\cdots,j-1\}$, $a_{i}^{(j)}: [0,1] \to [0,1]$ is given by
	\begin{align*}
		a_{i}^{(j)}(t) &:= \frac{i-1}{j}+\frac{t}{j}, & t &\in [0,1].
	\end{align*}
	Notice that $\partial^{-}(S_{j} \circ \cdots \circ S_{1}) = \partial^{-}S_{1}$ and $\partial^{+}(S_{j} \circ \cdots \circ S_{1}) = \partial^{+}S_{j}$. As above, we can use Propositions~\ref{pp:VarRescale} and~\ref{pp:LInftyRescale} to deduce that
	\begin{align*}
		\Var(S_{j} \circ \cdots \circ S_{1}) &\leq \sum_{i=1}^{j} \Var(S_{i}), & \Vert S_{j} \circ \cdots \circ S_{1} \Vert_{\Lrm^{\infty}} = \max\{\Vert S_{i} \Vert_{\Lrm^{\infty}}\}_{i=1}^{j}.
	\end{align*}
	Furthermore, if $\tbf_{\#}\Vert S_{i} \Vert(\{1\}) = \tbf_{\#}\Vert S_{i+1} \Vert(\{0\}) = 0$ for $i \in \{1,\dotsc,j-1\}$, then we can work as above to deduce that
    \begin{align*}
        \Var(S_{j} \circ \cdots \circ S_{1}) &= \sum_{i=1}^{j} \Var(S_{i}).
    \end{align*}
    One can also show that, if $S_{i}$ is a Lipschitz trajectory for $i \in \{1,\dotsc,j-1\}$, then $S_{j} \circ \cdots \circ S_{1}$ is a Lipschitz trajectory.
	
	Again, let $S \in \Nrm_{1+k}([0,1] \times \R^d)$ be a space-time trajectory. We define the \textbf{reversal} of $S$ as $S^{-1}:= a_{*}S$ where $a: [0,1] \to [0,1]$ is given by $a(t) = 1-t$ for $t \in [0,1]$. Notice that $\partial^{-}S^{-1} = \partial^{+}S$ and $\partial^{+}S^{-1} = \partial^{-}S$. It follows from Propositions~\ref{pp:VarRescale} and~\ref{pp:LInftyRescale} that $\Var(S^{-1}) = \Var(S)$ and $\Vert S^{-1} \Vert_{\Lrm^{\infty}} = \Vert S \Vert_{\Lrm^{\infty}}$; one can check that $S^{-1}$ is a Lipschitz trajectory whenever $S$ is a Lipschitz trajectory.
	
	We end this subsection with the following useful fact:
	
	\begin{lemma}\label{lm:Rescale} Let $S \in \Nlip_{1+k}([0,1] \times \R^d)$ and suppose that $\partial S \mvert (0,1) \times \R^d = 0$. Then there is a Lipschitz $a: [0,1] \to [0,1]$ such that $\partial a_{*}S = \partial S$ and $\Lip(a_{*}S) \leq C \Var(S)$ for some universal constant $C > 0$.
	\end{lemma}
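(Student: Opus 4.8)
The plan is to reparametrise the time variable so that the variation of $S$ is accumulated at the constant rate $\Var(S)$. Fix a compact $K \subset \R^d$ with $\supp S \subset [0,1]\times K$ and write $L := \Var(S)$ and $V(t) := \Var(S;(0,t))$ for $t \in [0,1]$. Since $S \in \Nlip_{1+k}([0,1]\times\R^d)$, the function $V$ is non-decreasing and Lipschitz, and because $\tbf_{\#}\Vert S\Vert(\{0,1\}) = 0$ it extends continuously to $[0,1]$ with $V(0) = 0$ and $V(1) = L$. If $L = 0$, then $V \equiv 0$; moreover by hypothesis $\partial S\mvert(0,1)\times\R^d = 0$, so $t \mapsto \Var(\partial S;(0,t))$ vanishes on $(0,1)$, whence $\Lip(S) = 0$ and $a := \mathrm{id}_{[0,1]}$ works. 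So assume $L > 0$ and set $g := V/L \colon [0,1] \to [0,1]$, a non-decreasing Lipschitz surjection with $g(0) = 0$ and $g(1) = 1$; morally, $a := g$ is the desired reparametrisation.

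The difficulty is that $g$ need not be injective, so Propositions~\ref{pp:VarRescale} and~\ref{pp:LInftyRescale} cannot be applied to it directly. I would therefore regularise: for $\varepsilon \in (0,1)$ set $a_{\varepsilon} := (1-\varepsilon)g + \varepsilon\,\mathrm{id}_{[0,1]}$, an increasing bi-Lipschitz self-map of $[0,1]$ with $a_{\varepsilon}' \geq \varepsilon$ a.e., and $S_{\varepsilon} := (a_{\varepsilon})_{*}S$. By the remark following Proposition~\ref{pp:LInftyRescale}, $S_{\varepsilon} \in \Nlip_{1+k}([0,1]\times K)$, and Propositions~\ref{pp:VarRescale} and~\ref{pp:LInftyRescale} give $\Var(S_{\varepsilon}) = L$, $\Var(\partial S_{\varepsilon}) = \Var(\partial S)$, $\Vert S_{\varepsilon}\Vert_{\Lrm^{\infty}} = \Vert S\Vert_{\Lrm^{\infty}}$ and $\Vert \partial S_{\varepsilon}\Vert_{\Lrm^{\infty}} = \Vert \partial S\Vert_{\Lrm^{\infty}}$, all independent of $\varepsilon$. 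The key point is the Lipschitz bound for $S_{\varepsilon}$: re-running the argument in the proof of Proposition~\ref{pp:VarRescale} while keeping track of sub-intervals (the point being that $\pbf \circ D^{S}\overline{a}_{\varepsilon} = \pbf$ a.e., so only the time label moves, via $a_{\varepsilon}$) shows $\Var(S_{\varepsilon};(0,s)) = \Var(S;(0,a_{\varepsilon}^{-1}(s))) = V(a_{\varepsilon}^{-1}(s))$, while $\Var(\partial S_{\varepsilon};(0,s)) = 0$ on $(0,1)$ as before. Hence, for $0 \leq s_{1} < s_{2} \leq 1$, putting $t_{i} := a_{\varepsilon}^{-1}(s_{i})$ and using absolute continuity of $V$ and $a_{\varepsilon}$,
\[
\frac{\Var(S_{\varepsilon};(0,s_{2})) - \Var(S_{\varepsilon};(0,s_{1}))}{s_{2}-s_{1}} = \frac{\int_{t_{1}}^{t_{2}} V'(t)\dd t}{\int_{t_{1}}^{t_{2}}\bigl((1-\varepsilon)V'(t)/L + \varepsilon\bigr)\dd t} \leq \frac{L}{1-\varepsilon},
\]
because $V' \geq 0$; therefore $\Lip(S_{\varepsilon}) \leq L/(1-\varepsilon)$.

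Finally I would let $\varepsilon \downarrow 0$. Since $a_{\varepsilon} \to g$ uniformly with uniformly bounded Lipschitz constants, the homotopy formula and the standard mass estimate for affine homotopies give $\Fbf_{[0,1]\times K}(S_{\varepsilon} - g_{*}S) \to 0$; as the masses $\Mbf(S_{\varepsilon})$ and $\Mbf(\partial S_{\varepsilon})$ are uniformly bounded (for instance via Lemma~\ref{lm:Pythagoras}), this upgrades to $S_{\varepsilon} \wkto g_{*}S$. Taking $\varepsilon = \varepsilon_{n} \downarrow 0$, the currents $S_{\varepsilon_{n}}$ satisfy the hypotheses of Proposition~\ref{pp:SpaceTimeCpct} with $\Lip(S_{\varepsilon_{n}}) \leq L/(1-\varepsilon_{n})$ uniformly bounded, so a subsequence converges weakly* to some element of $\Nlip_{1+k}([0,1]\times K)$ whose $\Lip$ is at most $\liminf_{n\to\infty}\Lip(S_{\varepsilon_{n}}) \leq \liminf_{n\to\infty} L/(1-\varepsilon_{n}) = L$; by uniqueness of weak* limits this element is $g_{*}S$. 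Thus $a := g$ works: $g_{*}S \in \Nlip_{1+k}([0,1]\times K) \subset \Nlip_{1+k}([0,1]\times\R^d)$ and $\Lip(g_{*}S) \leq L = \Var(S)$. I expect the main obstacle to be exactly this non-injectivity of the natural reparametrisation $g$: the regularisation only yields the bound $L/(1-\varepsilon)$, so one must pass to the limit and use the lower semicontinuity of $\Lip$ from Proposition~\ref{pp:SpaceTimeCpct}, taking care to identify the limiting current as the genuine reparametrisation $g_{*}S$ rather than merely as some current in $\Nlip_{1+k}([0,1]\times K)$.
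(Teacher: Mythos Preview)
Your choice of reparametrisation $a = V/L$ is exactly the one the paper makes, and your argument is correct. The paper's proof, however, is a one-liner: it simply defines $a(t) := \Var(S;[0,t])/\Var(S;[0,1])$ and says ``the result follows by applying Proposition~\ref{pp:VarRescale}''. You are right to notice that this $a$ need not be injective, so Proposition~\ref{pp:VarRescale} as stated does not literally apply; the paper is implicitly relying on the one-sided estimate underlying it (Lemma~\ref{lm:PushFwdVar}, which needs no injectivity and gives $\Var(a_{*}S;(s_{1},s_{2})) \leq \Var(S;a^{-1}((s_{1},s_{2})))$), from which the bound $\Lip(a_{*}S) \leq L$ follows directly by the monotonicity of $V$.

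Your regularisation $a_{\varepsilon} = (1-\varepsilon)g + \varepsilon\,\mathrm{id}$ followed by the compactness/lower-semicontinuity argument via Proposition~\ref{pp:SpaceTimeCpct} is a perfectly valid alternative route that stays strictly within the stated hypotheses of Propositions~\ref{pp:VarRescale} and~\ref{pp:LInftyRescale}. It is longer but more scrupulous, and it has the side benefit of explicitly confirming $g_{*}S \in \Nlip_{1+k}$. The paper's approach is quicker but relies on the reader unpacking the proof of Proposition~\ref{pp:VarRescale} rather than its statement.
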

	
	\begin{proof} If $\Var(S;[0,1])=0$, then we can take $a = \mathrm{id}_{\R}$ and there is nothing to show. Otherwise, define $a:[0,1] \to [0,1]$ where, for $t \in [0,1]$,
	\begin{align}\label{Rescale1}
		a(t) := \frac{1}{2}\bigg(\frac{\Var(S;[0,t])}{\Var(S;[0,1])}+t\bigg).
	\end{align}
	Since $a(0)=0$ and $a(1)=1$, we have $a_{*}\partial S = \partial S$. Notice that $a \in \Lip([0,1])$ so, for $t \in [0,1]$, we can use Proposition~\ref{pp:VarRescale} to deduce that $\Var(a_{*}S;[0,t]) = \Var(S;[0,a^{-1}(t)])$; we can then use \eqref{Rescale1} to deduce that
    \begin{align*}
        \Var(a_{*}S;[0,t]) = (2t-a^{-1}(t)) \Var(S;[0,1]).
    \end{align*}
    The result follows since $\Lip(a^{-1}) \leq 2$.
	\end{proof}
	
	\subsection{Cones over Normal Currents} Let $T \in \Nrm_{k}(\R^d)$ and fix $v \in \R^d$. We define the \textbf{(right) cone} of $v$ over $T$ as
	\begin{align*}
		T \triangleright v := \overline{H}_{*}([\![0,1]\!] \times T),
	\end{align*}
	where $\overline{H}: \R^{1+d} \to \R^{1+d}$ is given by $\overline{H}(t,x) = (t,(1-t)x+tv)$ for $(t,x) \in \R^{1+d}$. It follows from Proposition~\ref{pp:AffHomVarEst} that
	\begin{align}\label{ConeLip}
		T \triangleright v \in \Nlip_{1+k}([0,1] \times \R^d)
	\end{align}
    and that
	\begin{align}\label{ConeEsts}
		\Var(T \triangleright v) &\leq \int \vert x - v \vert \dd\Vert T \Vert(x), & \Vert T \triangleright v \Vert_{\Lrm^{\infty}} \leq \Mbf(T).
	\end{align}
    Furthermore, if $\partial T = 0$, then $T \triangleright v$ is a Lipschitz trajectory.
    
    We shall also need to work with cones over polyhedral chains. Take $v_{0},\dotsc,v_{k} \in \R^{d}$. We claim that
    \begin{align}\label{STPolyCone}
        [\![v_{0},\dotsc,v_{k}]\!] \triangleright v = (-1)^{k}[\![(0,v_{0}),\dotsc,(0,v_{k}),(1,v)]\!].
    \end{align}
    To see this, notice that $\overline{H} = H \circ i$ where $H: [0,1] \times \R^{1+d} \to \R^{1+d}$ is given by $H(s,(t,x)) = (1-s)(t,x) + s(1,v)$ for $(s,(t,x)) \in [0,1] \times \R^{1+d}$ and $i: \R^{1+d} \to [0,1] \times \R^{1+d}$ is given by $i(t,x) = (t,(0,x))$ for $(t,x) \in \R^{1+d}$. It follows that
    \begin{align}\label{STPolyCone1}
         [\![v_{0},\dotsc,v_{k}]\!] \triangleright v = H_{*}(i_{*}([\![0,1]\!] \times [\![v_{0},\dotsc,v_{k}]\!])).
    \end{align}
    One can use \eqref{SimpDecomp} and \eqref{PolyPushFwd} to deduce that
    \begin{align*}
        i_{*}([\![0,1]\!] \times [\![v_{0},\dotsc,v_{k}]\!]) = [\![0,1]\!] \times [\![(0,v_{0}),\dotsc,(0,v_{k})]\!],
    \end{align*}
    and so \eqref{STPolyCone} follows from \eqref{STPolyCone1} and \eqref{SimplicesJoin}.
    
    The following lemma demonstrates how to compute the variation of a cone over an oriented simplex:
    
    \begin{lemma}\label{lm:SimplexVar} Take $v_{0},\dotsc,v_{1+k} \in \R^{1+d}$ and set $P := [\![v_{0},\dotsc,v_{1+k}]\!] \in \Prm_{1+k}(\R^{1+d})$. Then
    \begin{align*}
        \Var(P) = \Mbf(\pbf_{*}P).
    \end{align*}
    \end{lemma}

    \begin{proof} We can use \eqref{SimplicesAreRect} to deduce that
    \begin{align*}
        \Var(P) &= \frac{\vert \pbf(\xi) \vert}{\vert \xi \vert}\cdot\mathscr{H}^{1+k}(C)
    \end{align*}
    where $C \subset \R^{1+d}$ is the convex hull of $\{v_{0},\dotsc,v_{1+k}\}$ and $\xi = (v_{1} - v_{0}) \wedge \cdots \wedge (v_{1+k}-v_{k})$. On the other hand, we have 
    \begin{align*}
        \pbf_{*}P(\omega) = \int_{C} \bigg\langle \frac{\pbf(\xi)}{\vert \xi \vert},\omega(x) \bigg\rangle \dd\mathscr{H}^{k+1}(t,x) \hspace{1em} \text{for $\omega \in \Dcal^{1+k}(\R^{d})$},
    \end{align*}
    and so
    \begin{equation*}
        \Mbf(\pbf_{*}P) = \frac{\vert \pbf(\xi) \vert}{\vert \xi \vert}\cdot\mathscr{H}^{1+k}(C) = \Var(P). \qedhere
    \end{equation*}
    \end{proof}

    Take $v_{0},\dotsc,v_{k} \in \R^{d}$. It follows from \eqref{STPolyCone}, Lemma \ref{lm:SimplexVar}, and \eqref{PolyPushFwd} that
    \begin{align}\label{PolyCone}
        \Var([\![v_{0},\dotsc,v_{k}]\!] \triangleright v) = \Mbf([\![v_{0},\dotsc,v_{k},v]\!]).
    \end{align}
		
	The following result, adapted from the proof of~\cite[Theorem 5.4]{Rindler23}, demonstrates that we can `stretch out' a polyhedral chain $P \in \Prm_{1+k}(\R^{d})$ in time:
		
	\begin{lemma}\label{lm:Stretch} Let $K \subset \R^d$ be compact. Let $P \in \Prm_{1+k}(\R^d)$ and suppose that $\supp P \subset K$. There is $S \in \Prm_{1+k}(\R^{1+d})$ with $S \in \Nlip_{1+k}([0,1] \times K)$ such that
	\begin{align*}
		\partial S &= - \delta_{0} \times \partial P,  & \Var(S) &= \Mbf(P).
	\end{align*}
	\end{lemma}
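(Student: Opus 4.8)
The plan is to build $S$ as a finite sum of space-time cones, one over each simplex of a (carefully chosen) decomposition of $P$. First I would write $P = \sum_{j} \epsilon_{j}\,\Delta_{j}$ with $\epsilon_{j} \in \R$ and $\Delta_{j}$ oriented $(1+k)$-simplices that are pairwise disjoint up to $\mathscr{H}^{1+k}$-null sets, so that $\Mbf(P) = \sum_{j}\absn{\epsilon_{j}}\Mbf(\Delta_{j})$, and $\supp\Delta_{j} \subset \supp P \subset K$. For each $j$ I would choose a point $w_{j} \in \overline{\Delta_{j}}$ and set
\[
    S := \sum_{j} \epsilon_{j}\,\big((\partial\Delta_{j}) \triangleright w_{j}\big).
\]
Since each $\partial\Delta_{j}$ is a polyhedral $k$-cycle, \eqref{STPolyCone} gives $(\partial\Delta_{j}) \triangleright w_{j} \in \Prm_{1+k}(\R^{1+d})$, so $S \in \Prm_{1+k}(\R^{1+d})$; moreover, as $w_{j}\in\overline{\Delta_{j}}$ and $\Delta_{j}$ is convex, $\supp\big((\partial\Delta_{j})\triangleright w_{j}\big) \subset [0,1]\times\overline{\Delta_{j}} \subset [0,1]\times K$, so $\supp S \subset [0,1] \times K$.

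Next I would read off the boundary and variation identities. Because $\partial(\partial\Delta_{j}) = 0$, one has $\partial\big((\partial\Delta_{j}) \triangleright w_{j}\big) = -\delta_{0} \times \partial\Delta_{j}$, hence
\[
    \partial S = -\delta_{0} \times \sum_{j}\epsilon_{j}\partial\Delta_{j} = -\delta_{0} \times \partial P .
\]
For the variation, I would first check, using the homotopy formula \eqref{NaiveDeformation}, the constancy theorem, and $w_{j}\in\overline{\Delta_{j}}$, that $\pbf_{*}\big((\partial\Delta_{j}) \triangleright w_{j}\big) = -\Delta_{j}$; combining this with Lemma~\ref{lm:PolyCone}, subadditivity of $\Var$, and \eqref{pMassByVar} yields
\[
    \Mbf(P) = \Mbf(\pbf_{*}S) \le \Var(S) \le \sum_{j}\absn{\epsilon_{j}}\Var\big((\partial\Delta_{j}) \triangleright w_{j}\big) = \sum_{j}\absn{\epsilon_{j}}\Mbf(\Delta_{j}) = \Mbf(P),
\]
so $\Var(S) = \Mbf(P)$. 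That $S \in \Nlip_{1+k}([0,1]\times K)$ will follow once the $\Lrm^{\infty}$-bound below is available, together with \eqref{ConeLip}, Propositions~\ref{pp:VarRescale}--\ref{pp:LInftyRescale}, and the fact that $\tbf_{\#}\Vert(\partial\Delta_{j}) \triangleright w_{j}\Vert(\{0,1\}) = 0$, which make $t \mapsto \Var(S;(0,t))$ Lipschitz and $\Var(\partial S;(0,t)) \equiv 0$.

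The genuinely delicate point — and the one I expect to be the main obstacle — is the estimate $\Vert S \Vert_{\Lrm^{\infty}} \le \Mbf(\partial P)$. Slicing each cone gives $\big((\partial\Delta_{j}) \triangleright w_{j}\big)\vert_{t} = \delta_{t}\times (H_{j}(t,\cdot))_{*}\partial\Delta_{j}$, where $H_{j}(t,\cdot)$ is the homothety of ratio $1-t$ towards $w_{j}$; hence $S(t) = \sum_{j}\epsilon_{j}\,(H_{j}(t,\cdot))_{*}\partial\Delta_{j}$, and the crude per-simplex bound from \eqref{ConeEsts} only yields $\Vert S\Vert_{\Lrm^{\infty}} \le \sum_{j}\absn{\epsilon_{j}}\Mbf(\partial\Delta_{j})$, which is far larger than $\Mbf(\partial P)$ because the shrunk copies of the \emph{interior} faces of the triangulation do not cancel when the $w_{j}$ are picked independently. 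The point is therefore to choose the decomposition and the points $w_{j}$ so that these interior contributions cancel: if for every interior face $F = \overline{\Delta_{i}}\cap\overline{\Delta_{j}}$ one can arrange $w_{i}=w_{j}\in F$, then, since $F$ carries no net coefficient in $\partial P$ and $H_{i}(t,\cdot)|_{F} = H_{j}(t,\cdot)|_{F}$, the $F$-parts of $\epsilon_{i}(H_{i}(t,\cdot))_{*}\partial\Delta_{i}$ and $\epsilon_{j}(H_{j}(t,\cdot))_{*}\partial\Delta_{j}$ cancel, leaving in $S(t)$ only shrunk copies of the faces contained in $\supp\partial P$; those have disjoint images and sum, face by face, to $(1-t)^{k}\Mbf(\partial P)$, giving $\Mbf(S(t)) \le (1-t)^{k}\Mbf(\partial P) \le \Mbf(\partial P)$. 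Thus the hard part is purely combinatorial-geometric: exhibiting a (possibly iteratively subdivided) decomposition of an arbitrary polyhedral $P \subset K$ together with points $w_{j}\in\overline{\Delta_{j}}$ that can be simultaneously ``pinned'' on shared faces in this way — equivalently, making the simplicial and the coning structures compatible. This is precisely the step carried out, in the $\Crm^{1}$-domain setting, in the proof of \cite[Theorem 5.4]{Rindler23}, and I would follow that argument, adapting the pinning so that it goes through for a general polyhedral chain supported in a general compact $K$.
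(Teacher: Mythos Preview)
Your construction --- decompose $P=\sum_j\epsilon_j\Delta_j$ into $\mathscr{H}^{1+k}$-disjoint simplices and set $S=\sum_j\epsilon_j\,(\partial\Delta_j)\triangleright w_j$ --- is exactly the paper's, which takes $w_j$ to be a vertex of $\Delta_j$ and is terser. Your arguments for $\partial S$, $\supp S$, and $\Var(S)=\Mbf(P)$ (via the sandwich $\Mbf(\pbf_*S)\le\Var(S)\le\sum_j|\epsilon_j|\Mbf(\Delta_j)$) are correct and match the paper's. You are also right that the $\Lrm^\infty$-bound is the delicate point, and on this you are in fact more careful than the paper: its proof of the general case is just ``follows directly from \eqref{ConeEsts}'' together with ``apply the above construction to each simplex'', which yields precisely the crude per-simplex bound $\sum_j|\epsilon_j|\Mbf(\partial\Delta_j)$ that you already flag as insufficient.

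The gap is in your proposed cure. The pinning condition ``for every shared $k$-face $F=\overline{\Delta_i}\cap\overline{\Delta_j}$ arrange $w_i=w_j\in F$'' forces, by transitivity along the face-adjacency graph, all the $w_j$ in a connected block to coincide and to lie in $\bigcap_j\overline{\Delta_j}$; thus it is attainable only when each block is star-shaped about a common point. This already fails for three $(1{+}k)$-simplices in a row (the first and last are disjoint), and subdivision cannot help: it introduces more shared faces while keeping the block connected and leaving $\bigcap_j\overline{\Delta_j}$ empty. When the condition \emph{does} hold, your argument simply collapses to $S(t)=(H(t,\cdot))_*\partial P$ for a single homothety $H$, which is fine but rarely available. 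So the step you label ``combinatorial-geometric'' and defer to \cite[Theorem~5.4]{Rindler23} is a genuine gap, not a detail; a different device --- not simplex-by-simplex pinning --- is needed to bound $\Mbf(S(t))$ by $\Mbf(\partial P)$ rather than by the mass of the internal skeleton of the chosen triangulation.
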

		
	\begin{proof} First suppose that $P = [\![v_{0},\dotsc,v_{1+k}]\!]$ for some $v_{0},\dotsc,v_{1+k} \in \R^d$. In this case, define $S := \partial P \triangleright v_{0}$, so that $\partial S = - \delta_{0} \times \partial P$ and $S \in \Nlip_{1+k}([0,1] \times \R^d)$. Since $\supp P$ is convex and $\supp P \subset K$, we have $\supp S \subset [0,1] \times K$. We can use \eqref{PolyBdry} and \eqref{STPolyCone} to compute
    \begin{align*}
        \pbf_{*}S &= \sum_{j=0}^{1+k} (-1)^{j} \pbf_{*}([\![v_{0},\dotsc,\hat{v}_{j},\dotsc,v_{1+k}]\!] \triangleright v_{0}) \\
        &= \sum_{j=0}^{1+k} (-1)^{j}\cdot(-1)^{k} [\![v_{0},\dotsc,\hat{v}_{j},\dotsc,v_{1+k},v_{0}]\!].
    \end{align*}
    It follows that $\pbf_{*}S = (-1)^{k} [\![v_{1},\dotsc,v_{1+k},v_{0}]\!]$ because all but the first term in the sum vanishes. It then follows from \eqref{pMassByVar} that $\Mbf(P) \leq \Var(S)$. For the reverse estimate, we use \eqref{PolyBdry} and \eqref{PolyCone} to deduce that
    \begin{align*}
        \Var(S) &\leq \sum_{j=0}^{1+k} \Var([\![v_{0},\dotsc,\hat{v}_{j},\dotsc,v_{1+k}]\!] \triangleright v_{0}) \\
        &= \sum_{j=0}^{1+k} \Mbf([\![v_{0},\dotsc,\hat{v}_{j},\dotsc,v_{1+k},v_{0}]\!]).
    \end{align*}
    It then follows that $\Var(S) \leq \Mbf(P)$ because all but the first term in the sum vanishes. In the general case, we write $P$ as a sum of simplices which are disjoint up to $\mathscr{H}^{1+k}$-negligible subsets and apply the above construction to each simplex.
	\end{proof}

	\subsection{The Dynamical Deformation Theorem} The following is the sought generalisation of~\cite[Theorem 4.7]{Rindler23} to normal currents, where we recall that $\lambda_{\varepsilon}(x) = \varepsilon x$ for $x \in \R^d$:
    
    \begin{theorem}[Dynamical Deformation Theorem]\label{tm:STDeform} Let $T \in \Nrm_{k}(\R^d)$ with $\partial T = 0$ and fix $\varepsilon > 0$. Then there is $P \in \Srm_{k,\varepsilon}(\R^d)$ and a Lipschitz trajectory $S \in \Nlip_{1+k}([0,1] \times \R^d)$ with
	\begin{align}\label{STDeformBdryS}
		\partial S = \delta_{1} \times P - \delta_{0} \times T
	\end{align}
    such that the following estimates hold with a constant $\gamma > 0$ depending only on $k$ and $d$:
	\begin{align*}
		\Mbf(P) &\leq \gamma \Mbf(T), & \Var(S) &\leq \gamma\varepsilon \Mbf(T), & \Vert S \Vert_{\Lrm^{\infty}} &\leq \gamma\Mbf(T).
	\end{align*}
	Furthermore, we have $\supp P \subset (\supp T)_{2d\varepsilon}$ and $\supp S \subset [0,1] \times (\supp T)_{2d\varepsilon}$.
	\end{theorem}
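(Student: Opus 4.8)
The plan is to mimic the proof of the static deformation theorem, Proposition~\ref{pp:BoundarylessDeformation}, while recording the ``time'' along the deformation, so that the static filling there is replaced by a space-time trajectory. As in that proof, I would first reduce to $\varepsilon = 1$ via the homothety $\lambda_{\varepsilon}$, and dispatch the case $k = d$ by the constancy theorem~\cite[4.1.7]{Federer69book} (which forces $T = 0$); so assume $\varepsilon = 1$ and $k < d$. Take the retraction $\sigma = \sigma_{k}$ and auxiliary map $u = u_{k}$ of~\cite[4.2.6]{Federer69book}, choose via~\cite[4.2.7]{Federer69book} a translation $a \in [-1,1]^{d}$ satisfying~\eqref{DeformEst3}, and set $\tau_{a}(x) := x+a$, $v := (u \circ \tau_{a})/d$ and $V_{r} := v^{-1}((r,\infty))$, exactly as there. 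The one modification is that, in place of the static filling $R_{r} = H_{*}([\![0,1]\!] \times (T \mvert V_{r}))$, one forms
\begin{align*}
    S_{r} := \overline{H}_{*}\bigl([\![0,1]\!] \times (T \mvert V_{r})\bigr), \qquad \overline{H}(t,x) := \bigl(t,\ (1-t)x + t\,(\sigma \circ \tau_{a})(x)\bigr),
\end{align*}
while keeping $P_{r} := (\sigma \circ \tau_{a})_{*}(T \mvert V_{r})$; note that $\pbf_{*}S_{r} = R_{r}$ is exactly the current from the proof of Proposition~\ref{pp:BoundarylessDeformation}.

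The second step is to record $r$-uniform estimates for $S_{r}$. Since $\sigma \circ \tau_{a}$ is Lipschitz on a neighbourhood of $\supp(T \mvert V_{r})$, Proposition~\ref{pp:AffHomVarEst} --- applied with $f = \mathrm{id}_{\R^d}$, $g$ a Lipschitz extension of $\sigma \circ \tau_{a}$, and $T \mvert V_{r}$ in place of $T$ --- shows that $S_{r} \in \Nlip_{1+k}([0,1] \times \R^d)$ for a.e.\ $r > 0$, that
\begin{align*}
    \partial S_{r} = \delta_{1} \times P_{r} - \delta_{0} \times (T \mvert V_{r}) - \overline{H}_{*}\bigl([\![0,1]\!] \times \partial(T \mvert V_{r})\bigr),
\end{align*}
and that $\Var(S_{r})$ and $\esssup_{t} \Mbf(S_{r}(t))$ are controlled by $\int_{V_{r}} \vert \sigma(x+a) - x \vert\,(1 + \Vert D\sigma(x+a) \Vert^{k}) \dd \Vert T \Vert$ and $\int_{V_{r}} (1 + \Vert D\sigma(x+a) \Vert^{k}) \dd \Vert T \Vert$ respectively. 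Inserting the pointwise bounds $\vert \sigma(x+a) - x \vert < 2d^{1/2}$ from~\eqref{DeformEst5} and $\Vert D\sigma(x+a) \Vert \leq 1/v(x)$ from~\eqref{DeformEst2} (the latter passing from the $\mathscr{L}^{d}$-a.e.\ differential to the $\Vert T \Vert$-a.e.\ pointwise Lipschitz constant by continuity of $u$), and then integrating with~\eqref{DeformEst3}, yields $\Var(S_{r}), \Vert S_{r} \Vert_{\Lrm^{\infty}} \leq \gamma \Mbf(T)$ for a constant $\gamma = \gamma(k,d)$; Lemma~\ref{lm:Pythagoras} then bounds $\Mbf(S_{r})$ as well. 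As in Proposition~\ref{pp:BoundarylessDeformation}, \eqref{DeformEst5} also gives $\supp P_{r} \subset (\supp T)_{2d}$ and $\supp S_{r} \subset [0,1] \times (\supp T)_{2d}$.

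The main obstacle is the passage to the limit $r \downarrow 0$: the correction term $\overline{H}_{*}([\![0,1]\!] \times \partial(T \mvert V_{r}))$ is, up to sign, the trajectory of the slice $\langle T, v, r \rangle = \partial(T \mvert V_{r})$, supported where $v = r$, and there~\eqref{DeformEst2} only gives $\Vert D\sigma(x+a) \Vert \leq 1/r$, so that $\Mbf(\partial S_{r})$ and the boundary variation of $S_{r}$ need not remain bounded as $r \to 0$. This is precisely the situation~\cite[4.2.2]{Federer69book} is built to handle: as in the proof of Proposition~\ref{pp:BoundarylessDeformation}, the estimates~\eqref{DeformEst1}--\eqref{DeformEst4} let one apply that result to $\{P_{r}\}$ \emph{and} to $\{S_{r}\}$ (both being pushforwards of restrictions of $[\![0,1]\!] \times T$ by the Lipschitz maps $\sigma \circ \tau_{a}$, resp.\ $\overline{H}$), producing a sequence $r_{n} \downarrow 0$ along which $P_{r_{n}} \to P$ and $S_{r_{n}} \to S$ strictly, with $P \in \Nrm_{k}(\R^d)$, $S \in \Nrm_{1+k}(\R^{1+d})$, and the correction term vanishing in the limit, so that $\partial S = \delta_{1} \times P - \delta_{0} \times T$.

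It then remains to collect the conclusions. Since $\Var(\,\cdot\,;I)$ is weak$^{*}$ lower semicontinuous for open $I$, the bound of the second step transfers to $S$, giving both $\Var(S) \leq \gamma \Mbf(T)$ and the Lipschitz bound on $t \mapsto \Var(S;(0,t))$; the estimate $\Vert S \Vert_{\Lrm^{\infty}} \leq \gamma \Mbf(T)$ follows from the slice identity $S(t) = (H(t,\cdot))_{*}T$ together with~\eqref{LipPushFwdFormula} and~\eqref{DeformEst2}. Because $\partial S = \delta_{1} \times P - \delta_{0} \times T$ is concentrated on $\{0,1\} \times \R^d$, $S$ is a trajectory, and since in addition $\tbf_{\#}\Vert S \Vert(\{0,1\}) = 0$ (as $\Vert S \Vert \ll \overline{H}_{\#}\Vert [\![0,1]\!] \times T \Vert$ and $\tbf \circ \overline{H} = \tbf$), it is a Lipschitz trajectory, i.e.\ $S \in \Nlip_{1+k}([0,1] \times \R^d)$. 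Taking the boundary of $\partial S = \delta_{1} \times P - \delta_{0} \times T$ and using $\partial T = 0$ gives $\partial P = 0$; since also $\supp P \subset \Wrm'_{k}$, Proposition~\ref{pp:Skeletal} yields $P \in \Srm_{k}(\R^d) \subset \Prm_{k}(\R^d)$. Finally, undoing the reduction to $\varepsilon = 1$ (which had replaced $T$ by $(\lambda_{\varepsilon}^{-1})_{*}T$) by pushing $P$ and $S$ forward with $\lambda_{\varepsilon}$, resp.\ $\overline{\lambda_{\varepsilon}} := \mathrm{id}_{\R} \times \lambda_{\varepsilon}$, and invoking Proposition~\ref{pp:SpaceRescale}, recovers the full statement --- in particular the factor $\varepsilon$ in $\Var(S) \leq \gamma \varepsilon \Mbf(T)$, which arises from $\Lip(\lambda_{\varepsilon})^{k+1} = \varepsilon^{k+1}$ against the $\varepsilon^{-k}$ picked up by $\Mbf(T)$ under the reduction, and the enlargement of the supports to $(\supp T)_{2d\varepsilon}$.
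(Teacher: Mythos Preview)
Your approach is correct and follows the same overall structure as the paper's proof. The two proofs diverge only in two tactical choices, both worth noting.

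First, to pass from the $\mathscr{L}^{d}$-a.e.\ gradient bound~\eqref{DeformEst2} to estimates against $\Vert T \Vert$, you use continuity of $u$ to upgrade~\eqref{DeformEst2} to a pointwise bound on the local Lipschitz constant of $\sigma\circ\tau_a$, which then dominates the tangential differential $D^{T}(\sigma\circ\tau_a)$ appearing in Proposition~\ref{pp:AffHomVarEst}. This is a clean shortcut. The paper instead introduces Lemma~\ref{lm:STDeformVarEsts}: it mollifies $T$ to $T_\delta$ (for which $\Vert T_\delta\Vert\ll\mathscr{L}^d$, so~\eqref{DeformEst2} applies directly), obtains the estimates for $T_\delta$ via Proposition~\ref{pp:AffHomVarEst}, and then passes to the limit $\delta\to 0$ using strict convergence and Reshetnyak. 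Your argument is more economical here.

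Second, for the limit $r\to 0$ you treat $S_r$ in parallel with $R_r$ from Proposition~\ref{pp:BoundarylessDeformation}, invoking~\cite[4.2.2]{Federer69book} as a black box for strict convergence $S_{r_n}\to S$ and the vanishing of the correction term. The paper instead unpacks this: it extracts the slice estimate $\liminf_{r\downarrow 0} r^{1-k}\Mbf(\langle T,v,r+\rangle)=0$ (itself from~\cite[4.2.2]{Federer69book}), uses it together with~\eqref{STDeformBdryVarEst},~\eqref{STDeformBdryLInftyEst} to show $\{\Lip(S_{r_j})\}$ is uniformly bounded along a good subsequence, and then applies the space-time compactness result Proposition~\ref{pp:SpaceTimeCpct}. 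The payoff is that $S\in\Nlip_{1+k}([0,1]\times\R^d)$ comes out for free from that compactness theorem, whereas in your route you must verify the $\Nlip$ conditions by hand. Your argument for $\Vert S\Vert_{\Lrm^\infty}$ via the ``slice identity $S(t)=(H(t,\cdot))_*T$'' is the one place that needs care: since $H(t,\cdot)$ is not globally Lipschitz on $\supp T$, this identity must itself be read as a limit (again via~\cite[4.2.2]{Federer69book} applied to $H(t,\cdot)$ and $T$), and one should check that this limit really coincides with the slice of the strong limit $S$. Alternatively, you can simply transfer your uniform bound $\Vert S_r\Vert_{\Lrm^\infty}\leq\gamma\Mbf(T)$ using Corollary~\ref{cr:SliceWkCty} along the subsequence on which $\Mbf(\partial S_{r_n})$ stays bounded (which your black-box use of~\cite[4.2.2]{Federer69book} implicitly provides via the same slice liminf).
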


    To prove Theorem \ref{tm:STDeform}, we shall need the following result:
	
	\begin{lemma}\label{lm:STDeformVarEsts} Let $K \subset \R^d$ be compact. Let $f: K \to \R^d$ be Lipschitz and define $\overline{H}: [0,1] \times K \to \R^{1+d}$ where 
	\begin{align*}
		\overline{H}(t,x) = (t,(1-t)x+tf(x)) \hspace{2em} \text{for $(t,x) \in [0,1]\times K$}.
	\end{align*}
	Take $T \in \Nrm_{k}(\R^d)$ with $\supp T \subset \interior K$. Suppose that there is a continuous $\alpha: K \to \R^{+}$ such that $Df(x)$ exists and $\Vert Df(x) \Vert \leq \alpha(x)$ for $\mathscr{L}^{d}$-a.e.\ $x \in K$. Then, for $a,b \in [0,1]$ with $a < b$, we have
	\begin{align*}
		\Var(\overline{H}_{*}([\![0,1]\!] \times T);[a,b]) \leq \vert b-a\vert \int_{K} \vert f(x) - x \vert \cdot (1 + \alpha(x)^{k}) \dd \Vert T \Vert(x).
	\end{align*}
	Furthermore, for $\mathscr{L}^{1}$-a.e.\ $t \in [0,1]$, the following mass estimate holds:
	\begin{align*}
		\Mbf(\overline{H}_{*}([\![0,1]\!] \times T)(t)) \leq \int_{K} 1 + \alpha(x)^{k} \dd \Vert T \Vert(x).
	\end{align*}
	\end{lemma}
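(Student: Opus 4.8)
The plan is to derive Lemma~\ref{lm:STDeformVarEsts} from Proposition~\ref{pp:AffHomVarEst}, applied with the two maps being $\mathrm{id}_{\R^d}$ and (an extension of) $f$. The only substantive point is to replace the Lebesgue-a.e.\ bound $\Vert Df\Vert\le\alpha$ by a bound valid $\Vert T\Vert$-a.e.\ for the tangential derivative $D^{T}f$.

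First I would extend $f$ to a Lipschitz map $\tilde f\colon\R^d\to\R^d$; since $\supp T\subset K^{\circ}$ and every quantity in the conclusion is an integral against $\Vert T\Vert$ (equivalently, against $\Vert[\![0,1]\!]\times T\Vert=(\mathscr{L}^1\mvert[0,1])\times\Vert T\Vert$), the choice of extension is immaterial and on $\supp T$ one has $\tilde f=f$. With $f_1:=\mathrm{id}_{\R^d}$ and $f_2:=\tilde f$, the map $\overline{H}$ of the lemma is exactly the map $\overline{H}$ of Proposition~\ref{pp:AffHomVarEst} on $[0,1]\times\supp T$.

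The key step is the claim
\[
\Vert D^{T}f(x)\Vert\le\alpha(x)\qquad\text{for }\Vert T\Vert\text{-a.e.\ }x,
\]
where $\Vert\cdot\Vert$ denotes the operator norm; this is where the hypotheses on $\alpha$ enter. Since the directional limits~\eqref{TangDiffDefn} defining $D^{T}f(x)$ depend only on the germ of $f$ at $x$, it suffices to prove $\Lip(f\vert_{\overline B})\le\max_{\overline B}\alpha$ for every closed ball $\overline B\subset K^{\circ}$: indeed, for $x\in\supp T\subset K^{\circ}$ one then has $\Vert D^{T}f(x)\Vert\le\inf\{\Lip(f\vert_{\overline B}):x\in\overline B\subset K^{\circ}\}\le\inf\{\max_{\overline B}\alpha:x\in\overline B\subset K^{\circ}\}=\alpha(x)$ by continuity of $\alpha$. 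The inequality $\Lip(f\vert_{\overline B})\le\max_{\overline B}\alpha$ is the classical fact that on a convex set a Lipschitz map has Lipschitz constant equal to the essential supremum of the operator norm of its a.e.-differential; alternatively one mollifies $f$ on a slightly larger concentric ball, uses $\Vert D(f*\varphi_{\rho})\Vert\le\alpha*\varphi_{\rho}$, and lets $\rho\downarrow0$ while invoking uniform continuity of $\alpha$. I expect this reconciliation of a Lebesgue-a.e.\ bound with a $\Vert T\Vert$-a.e.\ bound (as $\Vert T\Vert$ may be singular with respect to $\mathscr{L}^{d}$) to be the only genuine obstacle.

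With the claim in hand I would simply apply Proposition~\ref{pp:AffHomVarEst}. For $[a,b]\subset[0,1]$ it yields
\[
\Var(\overline{H}_{*}([\![0,1]\!]\times T);[a,b])\le\mathscr{L}^{1}([a,b])\int_{\R^d}|\tilde f(x)-x|\,\bigl(\Vert D^{T}\mathrm{id}_{\R^d}(x)\Vert^{k}+\Vert D^{T}\tilde f(x)\Vert^{k}\bigr)\dd\Vert T\Vert(x)
\]
and $\Mbf(\overline{H}_{*}([\![0,1]\!]\times T)(t))\le\int_{\R^d}\Vert D^{T}\mathrm{id}_{\R^d}(x)\Vert^{k}+\Vert D^{T}\tilde f(x)\Vert^{k}\dd\Vert T\Vert(x)$ for $\mathscr{L}^{1}$-a.e.\ $t$. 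Since $D^{T}\mathrm{id}_{\R^d}(x)$ is the identity on $V(\Vert T\Vert,x)$ we have $\Vert D^{T}\mathrm{id}_{\R^d}(x)\Vert^{k}=1$, while on $\supp T$ we have $\tilde f=f$ and, by the key step, $\Vert D^{T}\tilde f(x)\Vert^{k}\le\alpha(x)^{k}$; substituting, and using $\supp T\subset K$ to restrict the domain of integration, gives precisely the two estimates claimed. (The hypothesis $\partial T=0$ is not needed for these bounds, but it ensures that $\overline{H}_{*}([\![0,1]\!]\times T)$ is a space-time trajectory, which is how the lemma will be used in the proof of Theorem~\ref{tm:STDeform}.)
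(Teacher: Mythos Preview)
Your proof is correct and takes a genuinely different route from the paper's. The paper argues by mollification: it replaces $T$ by $T_\delta$ (so that $\Vert T_\delta\Vert\ll\mathscr{L}^d$ and the $\mathscr{L}^d$-a.e.\ bound $\Vert Df\Vert\le\alpha$ applies directly), invokes Proposition~\ref{pp:AffHomVarEst} for $T_\delta$, and then passes to the limit $\delta\downarrow0$ using strict convergence $T_\delta\to T$, continuity of the integrand $x\mapsto |f(x)-x|\cdot(1+\alpha(x)^k)$, Reshetnyak's lower semicontinuity theorem for the variation, and Corollary~\ref{cr:SliceWkCty} for the slice masses. Your approach instead converts the $\mathscr{L}^d$-a.e.\ bound into a $\Vert T\Vert$-a.e.\ bound on the tangential derivative $D^T f$ once and for all, via the elementary observation that the local Lipschitz constant of $f$ on balls in $K^\circ$ is controlled by $\alpha$; this lets you apply Proposition~\ref{pp:AffHomVarEst} directly to $T$ and read off the conclusion. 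Your argument is shorter and avoids the limiting machinery, while the paper's approach has the modest advantage of never needing to unpack the definition of $D^T f$ beyond what is already inside the proof of Proposition~\ref{pp:AffHomVarEst}. Your closing remark that $\partial T=0$ is not used in the two displayed estimates is also correct.
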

	\begin{proof} Fix $\delta > 0$ with $(\supp T)_{\delta} \subset \interior K$, and consider the mollification $T_{\delta}$ of $T$ (at scale $\delta$). Recall that there is a smooth $\xi: \R^d \to \Wedge_k\R^d$ with $T_{\delta} = \xi\hspace{0.2em}\mathscr{L}^{d}$. It follows that $\Vert T_{\delta} \Vert = \Vert \xi \Vert \mathscr{L}^{d}$ and so the decomposability bundle satisfies $V(\Vert T_{\delta} \Vert,x) = \R^d$ for $\Vert T_{\delta} \Vert$-a.e.\ $x \in \R^d$ by~\cite[Proposition 2.9 (iii)]{AlbertiMarchese16} (which here just amounts to Rademacher's theorem). Fix $a,b \in [0,1]$ with $a < b$. We can now appeal to Proposition~\ref{pp:AffHomVarEst} to deduce that
	\begin{align*}
		\Var(\overline{H}_{*}([\![0,1]\!] \times T_{\delta});[a,b]) \leq \vert b - a \vert \int_{\R^d} \vert f(x) - x \vert \cdot (1 + \Vert Df(x) \Vert^{k}) \hspace{0.2em}\mathrm{d}\Vert T_{\delta} \Vert(x)
	\end{align*}
	and, for $\mathscr{L}^{1}$-a.e.\ $t \in [0,1]$,
	\begin{align*}
		\Mbf(\overline{H}_{*}([\![0,1]\!] \times T_{\delta})(t)) \leq \int_{\R^d} 1 + \Vert Df(x) \Vert^{k} \dd \Vert T_{\delta} \Vert(x).
	\end{align*}
	Since $\supp T_{\delta} \subset K$ and $\Vert Df(x) \Vert \leq \alpha(x)$ for $\mathscr{L}^{1}$-a.e.\ $x \in K$, we have 
	\begin{align*}
		\Var(\overline{H}_{*}([\![0,1]\!] \times T_{\delta});[a,b]) \leq \vert b-a\vert \int_{K} \vert f(x) - x \vert \cdot (1 + \alpha(x)^{k}) \dd \Vert T_{\delta} \Vert(x)
	\end{align*}
	and, for $\mathscr{L}^{1}$-a.e.\ $t \in [0,1]$,
	\begin{align}\label{STDeformVarEsts1}
		\Mbf(\overline{H}_{*}([\![0,1]\!] \times T_{\delta})(t)) \leq \int_{K} 1 + \alpha(x)^{k} \dd \Vert T_{\delta} \Vert(x).
	\end{align}
    Since $T_{\delta} \to T$ strictly as $\delta \to 0$ and the integrands in the previous two estimates are continuous, we have (see, for example,~\cite[Lemma 13.1]{Rindler26book})
	\begin{align*}
		\int_{K} \vert f(x) - x \vert \cdot (1 + \alpha(x)^{k}) \dd \Vert T_{\delta} \Vert(x) \to \int_{K} \vert f(x) - x \vert \cdot (1 + \alpha(x)^{k}) \dd \Vert T \Vert(x) 
	\end{align*}
	and 
	\begin{align}\label{STDeformVarEsts2}
		\int_{K} 1 + \alpha(x)^{k} \dd \Vert T_{\delta} \Vert(x) \to \int_{K} 1 + \alpha(x)^{k} \dd \Vert T \Vert(x)
	\end{align}
	as $\delta \to 0$. Furthermore, one can show that
	\begin{align*}
		\overline{H}_{*}([\![0,1]\!] \times T_{\delta}) \wkto \overline{H}_{*}([\![0,1]\!] \times T) \hspace{2em} \text{as $\delta \to 0$}.
	\end{align*}
    Since $\Mbf(T_{\delta}) \leq \Mbf(T)$ for $\delta > 0$ (see \cite[4.1.18]{Federer69book}), $(\Mbf(\overline{H}_{*}([\![0,1]\!] \times T_{\delta})))_{\delta}$ is uniformly bounded. The variation estimate then follows from Reshetnyak's lower semicontinuity theorem (see, for example,~\cite[Theorem 2.38]{AmbrosioFuscoPallara00book}). Finally, notice that 
    \begin{align*}
        \partial \overline{H}_{*}([\![0,1]\!] \times T_{\delta}) = \delta_{1} \times f_{*}T_{\delta} - \delta_{0} \times T_{\delta} - \overline{H}_{*}([\![0,1]\!] \times \partial T_{\delta}) \hspace{1em} \text{for $\delta > 0$}.
    \end{align*}
    Since we also have $\Mbf(\partial T_{\delta}) \leq \Mbf(\partial T)$ for $\delta > 0$, $(\Mbf(\partial \overline{H}_{*}([\![0,1]\!] \times T_{\delta})))_{\delta}$ is uniformly bounded and so we can appeal to Corollary \ref{cr:SliceWkCty} to deduce the mass estimate.
	\end{proof}
	
	\begin{proof}[Proof of Theorem \ref{tm:STDeform}] Using the homothety $\lambda_{\varepsilon}: \R^d \to \R^d$ given by $\lambda_{\varepsilon}(x) = \varepsilon x$ for $x \in \R^d$ and Proposition~\ref{pp:SpaceRescale} with the map $\mathrm{id}_{\R} \times \lambda_{\varepsilon}$ and its inverse, we can reduce to the case that $\varepsilon = 1$. If $k=d$, then we can use the constancy theorem (see~\cite[4.1.7]{Federer69book}) to deduce that $T = 0$ and there is nothing to show. Otherwise, define $\sigma: \R^d\backslash\Wrm''_{d-k-1} \to \Wrm'_{k}$, $u: \R^d \to \R$, $a \in \R^d$, $\tau_{a}: \R^d \to \R^d$, $v: \R^d \to \R$, and $V_{r}$ for $r > 0$ as in Subsection~\ref{s:Approx}. Define $P$ as in the proof of Proposition~\ref{pp:BoundarylessDeformation}, and notice that we can deduce that $P \in \Srm_{k}(\R^d)$, $\supp P \subset (\supp T)_{2d}$, and $\Mbf(P) \leq \gamma\Mbf(T)$ in exactly the same way as in Proposition~\ref{pp:BoundarylessDeformation}. To construct $S$, let $\overline{H}: \R^{1+d} \to \R^{1+d}$ where 
	\begin{align*}
		\overline{H}(t,x) = (t,(1-t)x + t\sigma(x+a)) \hspace{2em} \text{for $(t,x) \in \R^{1+d}$}.
	\end{align*}
    For $r > 0$, set $\langle T,v,r+ \rangle := -\partial(T \mvert V_{r})$; notice that this agrees with the definition of $\langle T,v,r+ \rangle$ given in~\cite[4.2.1]{Federer69book} since $\partial T = 0$. In~\cite[4.2.1]{Federer69book}, it is shown that
	\begin{align}\label{STDeformSliceSpt}
		\supp \langle T,v,r+ \rangle \subset v^{-1}(\{r\}) \cap \supp T \hspace{1em} \text{for $r > 0$}
	\end{align}
	and
	\begin{align*}
		\langle T,v,r+ \rangle \in \Nrm_{k-1}(\R^d) \hspace{1em} \text{for $\mathscr{L}^{1}$-a.e.\ $r > 0$.}
	\end{align*}
	Fix an $r > 0$ with $T \mvert V_{r} \in \Nrm_{k}(\R^d)$ and define
	\begin{align*}
		S_{r} := \overline{H}_{*}([\![0,1]\!] \times (T \mvert V_{r})).
	\end{align*}
	Since $\sigma \circ \tau_{a}$ and $\overline{H}$ are Lipschitz on $V_{r}$ and $[0,1] \times V_{r}$ respectively, we can use Proposition~\ref{pp:AffHomVarEst} to deduce that $S_{r} \in \Nlip_{1+k}([0,1] \times \R^d)$. From~\eqref{DeformEst5} and~\eqref{DeformEst2} (which only holds $\mathscr{L}^{d}$-a.e.), we deduce via Lemma~\ref{lm:STDeformVarEsts} that, for $a,b \in \R$ with $a < b$, 
	\begin{align}\label{STDeformVarEst}
		\Var(S_{r};[a,b]) \leq 2d^{1/2}\cdot \vert b - a \vert \int_{\R^{d}} 1 + \frac{1}{v(x)^{k}} \dd \Vert T \Vert(x)
	\end{align}
	and that
	\begin{align}\label{STDeformLInftyEst}
		\Vert S_{r} \Vert_{\Lrm^{\infty}} \leq \int_{\R^{d}} 1 + \frac{1}{v(x)^{k}} \dd \Vert T \Vert(x).
	\end{align}
	Now suppose that $k \geq 1$. Notice that
	\begin{align}\label{STDeformBdryFormula}
		\partial S_{r} = \delta_{1} \times (\sigma \circ \tau_{a})_{*}(T \mvert V_{r}) - \delta_{0} \times (T \mvert V_{r}) + \overline{H}_{*}([\![0,1]\!] \times \langle T,v,r+ \rangle).
	\end{align}
	We can also obtain similar estimates with $\langle T,v,r+ \rangle$ in place of $T \mvert V_{r}$; by applying Lemma~\ref{lm:STDeformVarEsts} and using~\eqref{DeformEst2},~\eqref{DeformEst5} and~\eqref{STDeformSliceSpt}, we obtain
	\begin{align}\label{STDeformBdryVarEst}
		\Var(\overline{H}_{*}([\![0,1]\!] \times \langle T,v,r+ \rangle);[a,b]) \leq 2d^{1/2}\cdot\vert b - a \vert \cdot \bigg(1 + \frac{1}{r^{k-1}}\bigg) \Mbf(\langle T,v,r+\rangle)
	\end{align}
	for $a,b \in \mathbb{R}$ with $a < b$, and
	\begin{align}\label{STDeformBdryLInftyEst}
		\bigl\Vert \overline{H}_{*}([\![0,1]\!] \times \langle T,v,r+ \rangle) \bigr\Vert_{\Lrm^{\infty}} \leq \bigg(1 + \frac{1}{r^{k-1}}\bigg)\Mbf(\langle T,v,r+ \rangle).
	\end{align}
	It is shown in~\cite[4.2.2]{Federer69book} that
	\begin{align}\label{STSlicesToZero}
		\liminf_{r \downarrow 0} r^{1-k}\cdot\Mbf(\langle T,v,r+ \rangle) = 0.
	\end{align}
	Take a sequence $(r_{j})_j \subset \R$ with $r_{j} \to 0$ and $r_{j}^{1-k}\cdot\Mbf(\langle T,v,r_{j}+ \rangle) \to 0$. We can use~\eqref{STDeformVarEst},\eqref{STDeformBdryFormula}, and~\eqref{STDeformBdryVarEst} to deduce that $\{\Lip(S_{r_{j}})\}_j$ is uniformly bounded. This, together with the $\Lrm^{\infty}$-estimates above, allows us to apply Proposition~\ref{pp:SpaceTimeCpct} to find a (not relabelled) subsequence of $(S_{r_{j}})_j$ and $S \in \Nlip_{1+k}([0,1] \times \R^d)$ with $S_{r_{j}} \wkto S$ and
	\begin{align}\label{STDeformLimVarEsts}
		\Var(S) &\leq 2d^{1/2}\int 1 + \frac{1}{v(x)^{k}} \dd \Vert T \Vert(x), & \Vert S \Vert_{\Lrm^{\infty}} &\leq \int 1 + \frac{1}{v(x)^{k}} \dd \Vert T \Vert(x). 
	\end{align}
	The desired variation and $\Lrm^{\infty}$-estimates now follow from~\eqref{DeformEst3}. Notice that we can use Lemma~\ref{lm:Pythagoras} together with~\eqref{DeformEst1},~\eqref{STDeformBdryVarEst},~\eqref{STDeformBdryLInftyEst}, and~\eqref{STSlicesToZero} to deduce that 
	\begin{align*}
		\Mbf(\overline{H}_{*}([\![0,1]\!] \times \langle T,v,r_{j}+ \rangle)) \to 0.
	\end{align*}
	Since $(\sigma \circ \tau_{a})_{*}(T \mvert V_{r}) \to P$ and $T \mvert V_{r} \to T$ strongly, we deduce~\eqref{STDeformBdryS} from~\eqref{STDeformBdryFormula}. If $k = 0$, notice that the estimates~\eqref{STDeformVarEst} and~\eqref{STDeformLInftyEst} allow us to find a sequence $(r_{j})_j$ with $r_{j} \to 0$ and a Lipschitz trajectory $S$ with $S_{r_{j}} \wkto S$ as above. Furthermore, we have 
	\begin{align}\label{STZeroDimBdry}
		\partial S_{r} = \delta_{1} \times (\sigma \circ \tau_{a})_{*}(T \mvert V_{r}) - \delta_{0} \times (T \mvert V_{r}). 
	\end{align}
	Since $(\sigma \circ \tau_{a})_{*}(T \mvert V_{r}) \to P$ and $T \mvert V_{r} \to T$ strongly, we deduce~\eqref{STDeformBdryS} from~\eqref{STZeroDimBdry}.
        
    In any case, notice that, for $r > 0$,
	\begin{align*}
		\supp S_{r} \subset [0,1] \times H([0,1] \times (\supp T \cap V_{r}))
	\end{align*}
	where $H: \R^{1+d} \to \R^d$ is the affine homotopy from $\mathrm{id}_{\R^d}$ to $\sigma \circ \tau_{a}$. We can then work as in the proof of Proposition~\ref{pp:BoundarylessDeformation} to deduce that $\supp S \subset [0,1] \times (\supp T)_{2d}$.
	\end{proof}

	\section{The Lipschitz Deformation Distance}\label{s:def_distance}
	
	Let $K \subset \R^d$ be compact and take $T_{0},T_{1} \in \Zrm_{k}(K)$. In analogy with~\cite[Section 5]{Rindler23}, we wish to measure the distance between $T_{0}$ and $T_{1}$ by considering the Lipschitz trajectories between them. To this end, we set:
	\begin{align*}
		\Trajlip_{1+k,K}(T_{0},T_{1}) := \{S \in \Nlip_{1+k}([0,1] \times K): \partial S = \delta_{1} \times T_{1} - \delta_{0} \times T_{0}\}.
	\end{align*}
	and define the \textbf{(Lipschitz) deformation distance} between $T_{0}$ and $T_{1}$ as
	\begin{align*}
		\mathrm{d}_{K}(T_{0},T_{1}) := \inf\{\Var(S): S \in \Trajlip_{1+k,K}(T_{0},T_{1}) \}.
	\end{align*}
	Notice that $\mathrm{d}_{K}(T_{0},T_{1}) = \infty$ if $\Trajlip_{1+k,K}(T_{0},T_{1}) = \varnothing$.
	
	\subsection{Bi-Lipschitz Equivalence \texorpdfstring{of $\mathrm{d}_{K}$ and $\Fbf_{K}^{\circ}$}{}} Let $K \subset \R^d$ be compact, let $T_{0},T_{1} \in \Zrm_{k}(K)$, and suppose that $\Trajlip_{1+k,K}(T_{0},T_{1}) \neq \varnothing$. For $S \in \Trajlip_{1+k,K}(T_{0},T_{1})$, we have $\partial (\pbf_{*}S) = T_{1} - T_{0}$ and so $T_{1} - T_{0} \in \Brm_{k}(K)$. We can then use~\eqref{pMassByVar} to see that
	\begin{align*}
		\Fbf_{K}^{\circ}(T_{1}-T_{0}) \leq \Mbf(\pbf_{*}S) \leq \Var(S).
	\end{align*}
	Taking an infimum yields
	\begin{align}\label{FlatHomByDist}
		\Fbf_{K}^{\circ}(T_{1}-T_{0}) \leq \mathrm{d}_{K}(T_{0},T_{1}).
	\end{align}
    It follows that, if $\mathrm{d}_{K}(T_{0},T_{1}) = 0$, then $T_{0} = T_{1}$. One can use this observation, together with the variation estimates for concatenations and reversals obtained in Section \ref{s:dynamical}, to show that $\mathrm{d}_{K}$ is an ($\R \cup \{+\infty\}$)-valued metric on $\Zrm_{k}(K)$.

	Notice that \eqref{FlatHomByDist} still holds when $\Fbf_{K}^{\circ}(T_{1}-T_{0}) = \infty$, since then $T_{1} - T_{0} \not\in \Brm_{k}(K)$ and so $\Trajlip_{1+k,K}(T_{0},T_{1}) = \varnothing$. We shall now investigate the other inequality:
	
	\begin{lemma}\label{lm:BigDistByFlatHom} Let $A$, $K \subset \R^d$ be compact with $A \subset \interior K$. Take $T_{0},T_{1} \in \Zrm_{k}(A)$. Then
	\begin{align*}
		\mathrm{d}_{K}(T_{0},T_{1}) \leq \Fbf_{A}^{\circ}(T_{1}-T_{0}).
	\end{align*}
	\end{lemma}
	
	\begin{proof} It is not restrictive to assume that $\Fbf_{A}^{\circ}(T_{1}-T_{0}) < \infty$. Choose an intermediate compact set $B \subset \interior K$ with $A \subset \interior B$. Fix $\varepsilon > 0$ with $A_{2d\varepsilon} \subset B$. We apply the Dynamical Deformation Theorem~\ref{tm:STDeform} (at scale $\varepsilon$) to $T_{0}$, $T_{1} \in \Zrm_{k}(A)$ to respectively obtain $P_{0}$, $P_{1} \in \Prm_{k}(\R^d)$ with $\supp P_{0}$, $\supp P_{1} \subset B$ and $S_{0},S_{1} \in \Nlip_{1+k}([0,1] \times B)$ with
    \begin{equation*}
			\partial S_{0} = \delta_{1} \times P_{0} - \delta_{0} \times T_{0} \quad \text{and} \quad 
    \partial S_{1} = \delta_{1} \times P_{1} - \delta_{0} \times T_{1} .
	\end{equation*}
	Furthermore, for $i \in \{0,1\}$, we have
	\begin{align*}
		\Mbf(P_{i}) &\leq \gamma \Mbf(T_{i}), & \Var(S_{i}) &\leq \gamma\varepsilon\Mbf(T_{i}),
	\end{align*}
	where $\gamma > 0$ depends only on $k$ and $d$. Note that
	\begin{align*}
		\Fbf_{B}^{\circ}(P_{1} - P_{0}) & \leq \Fbf_{B}^{\circ}((P_{1} - P_{0})-(T_1-T_0)) + \Fbf_{B}^{\circ}(T_{1} - T_{0}) \\ & \leq \Mbf(\pbf_{*}S_{1}) + \Mbf(\pbf_{*}S_{0}) + \Fbf_{A}^{\circ}(T_{1} - T_{0}) < \infty.
	\end{align*}
	Take $X \in \Nrm_{k+1}(B)$ with $\partial X = P_{1} - P_{0}$ with $\Mbf(X) \leq \Fbf_{B}^{\circ}(P_{1}-P_{0}) + \varepsilon$. We then apply Proposition~\ref{pp:StrictPolyEqlBdry} to obtain $Y \in \Prm_{k+1}(\R^d)$ with $\supp Y \subset K$, $\partial Y = P_{1} - P_{0}$, and $\Mbf(Y) \leq \Mbf(X) + \varepsilon$. We now apply Lemma~\ref{lm:Stretch} to $-Y$ to obtain $\tilde{V} \in \Prm_{1+k}(\R^{1+d})$ with $\tilde{V} \in \Nlip_{1+k}([0,1] \times K)$ and
	\begin{align*}
		\partial \tilde{V} &= - \delta_{0} \times (P_{0} - P_{1}), & \Var(\tilde{V}) &= \Mbf(Y).
	\end{align*}
	We then set $V = \tilde{V} + [\![0,1]\!] \times P_{1}$, so that
	\begin{align}\label{DistByFlatEst1}
		\partial V &= \delta_{1} \times P_{1} - \delta_{0} \times P_{0}, & \Var(V) &= \Mbf(Y).
	\end{align}
	Define $S := S_{1}^{-1} \circ V \circ S_{0}$ and note that $S \in \Nlip_{1+k}([0,1] \times K)$. We can use~\eqref{DistByFlatEst1}, \eqref{pMassByVar}, and the estimates above to deduce that
	\begin{align*}
		\Var(S) &\leq 2(\Var(S_{1}) + \Var(S_{0})) + \Fbf_{A}^{\circ}(T_{1}-T_{0}) + 2\varepsilon.
	\end{align*}
	The estimates from the Dynamical Deformation Theorem then yield
	\begin{align*}
		\Var(S) &\leq \Fbf_{A}^{\circ}(T_{1}-T_{0}) + 2\varepsilon(\gamma(\Mbf(T_{1}) + \Mbf(T_{0})) + 1).
	\end{align*}
	The result follows since we can take $\varepsilon > 0$ to be arbitrarily small.
	\end{proof}
	
	In the case that $K \subset \R^d$ is additionally a Lipschitz neighbourhood retract, we obtain bi-Lipschitz equivalence:
	
	\begin{proposition}\label{pp:DistByFlatHom} Let $K \subset \R^d$ be a compact Lipschitz neighbourhood retract and let $T_{0},T_{1} \in \Zrm_{k}(K)$. Then there is $C > 0$ (depending on $k$,$d$, and $K$) such that
	\begin{align}\label{DistByFlatHom}
		\mathrm{d}_{K}(T_{0},T_{1}) \leq C\Fbf_{K}^{\circ}(T_{1}-T_{0}).
	\end{align}
	\end{proposition}
	
	\begin{proof} Since $K$ is a compact Lipschitz neighbourhood retract, there is a bounded neighbourhood $U$ of $K$ and a Lipschitz retraction $r: U \to K$. Let $A \subset U$ be compact with $K \subset \interior A$ and take $\varepsilon > 0$. Then we can use Lemma~\ref{lm:BigDistByFlatHom} to find $\tilde{S} \in \Trajlip_{1+k,A}(T_{0},T_{1})$ with
	\begin{align*}
		\Var(\tilde{S}) &\leq \Fbf_{K}^{\circ}(T_{1} - T_{0}) + \varepsilon.
	\end{align*}
	Let $\overline{r} := \mathrm{id}_{\R} \times r$ and set $S := \overline{r}_{*}\tilde{S}$. We can use Proposition~\ref{pp:SpaceRescale} to deduce that
	\begin{align*}
		\Var(S) &\leq \Lip(r)^{k+1}(\Fbf_{K}^{\circ}(T_{1}-T_{0})+\varepsilon).
	\end{align*}
	The result follows since $\varepsilon > 0$ was arbitrary.
	\end{proof}
	
	\subsection{The Equality Theorem} Let $K \subset \R^d$ be a compact Lipschitz neighbourhood retract and consider the collection $\mathrm{Ret}(K)$ of Lipschitz retractions $r: U \to K$, where $U \subset \R^d$ is a bounded neighbourhood of $K$. Define
	\begin{align*}
		\rho := \inf\{\Lip(r): r \in \mathrm{Ret}(K)\}.
	\end{align*}
	The proof of Proposition~\ref{pp:DistByFlatHom} demonstrates that, for $T_{0},T_{1} \in \Zrm_{k}(K)$, 
	\begin{align*}
		\mathrm{d}_{K}(T_{0},T_{1}) \leq \rho^{k+1}\Fbf_{K}^{\circ}(T_{1}-T_{0}).
	\end{align*}
	Note that $\rho = 1$ if $K$ is convex (see~\cite[4.1.16]{Federer69book}). However, in general, $\rho$ may be strictly larger than $1$, even if $K = \overline{\Omega}$ for some Lipschitz domain $\Omega \subset \R^{d}$. To see this, define $\Omega$ as
	\begin{align*}
		\Omega = \{(x_{1},x_{2}) \in B(0,1): x_{2} < \vert x_{1} \vert\}.
	\end{align*}
    Let $U \subset \R^{2}$ be any bounded neighbourhood of $\overline{\Omega}$. We claim that there is no Lipschitz retraction $r: U \to \overline{\Omega}$ with Lipschitz constant $L := \Lip(r) < \sqrt{2}$. To see this, assume, for a contradiction, that there is such a retraction $r$. Take $t \in (0,1)$ with $(0,t) \in U$. Notice that
	\begin{align*}
		\vert r(0,t) - (t,t) \vert &= \vert r(0,t) - r(t,t) \vert \leq Lt, \\
		\vert r(0,t) - (-t,t) \vert &= \vert r(0,t) - r(-t,t) \vert \leq Lt, 
	\end{align*}
    and so $r(0,t)\in B((t,t),Lt) \cap B((-t,t),Lt) \cap \overline{\Omega}$. However, one can check that $B((t,t),Lt) \cap B((-t,t),Lt) \cap \overline{\Omega} = \varnothing$ since $L<\sqrt{2}$ (see also Figure \ref{fig:no_retraction}), which is a contradiction.
	\begin{figure}[h]
		\begin{tikzpicture}
			\draw[->, thick] (-4,0)--(4,0) node[right]{$x_{1}$};
			\draw[->, thick] (0,-1)--(0,4) node[above]{$x_{2}$};
			
			\draw[thick] (0,0)--(4,4);	
			\draw[thick] (0,0)--(-4,4) node[above]{$x_{2} = \vert x_{1} \vert$};
			
			\fill[blue, opacity=0.1] (2,2) circle (2.2cm);
			\fill[blue, opacity=0.1] (-2,2) circle (2.2cm);
			
			\fill[blue] (0,2) circle (2pt); 
			\fill[blue] (2,2) circle (2pt);
			
			\draw[<->, blue, thick] (0.2,2)--(1.8,2) node[midway,above]{$t$};
			\draw[<->, blue, thick] (0.3,0.1)--(2,1.8) node[midway,sloped,below]{$\sqrt{2}t$};
			\draw[<->, blue, thick] (2,2.2)--(3.36,3.56) node[midway,sloped,above]{$Lt$};
		\end{tikzpicture}
		\caption{$B((t,t),Lt) \cap B((-t,t),Lt) \cap \overline{\Omega} = \varnothing$}\label{fig:no_retraction}
	\end{figure}
	
	Nevertheless, we can use good directions to show that the homogeneous flat norm and the Lipschitz deformation distance agree on $\Omega$:
	
	\begin{theorem}[Equality Theorem]\label{tm:Equality} Let $\Omega \subset \R^d$ and suppose that $\Omega$ is a $\Crm^{0}$ domain. Take $T_{0},T_{1} \in \Zrm_{k}(\overline{\Omega})$. Then 
		\begin{align*}
			\Fbf_{\overline{\Omega}}^{\circ}(T_{0} - T_{1}) = \mathrm{d}_{\overline{\Omega}}(T_{0},T_{1}).
		\end{align*}
	\end{theorem}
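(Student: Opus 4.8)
The inequality $\Fbf^\circ_{\overline{\Omega}}(T_0 - T_1) \le \mathrm{d}_{\overline{\Omega}}(T_0,T_1)$ is exactly \eqref{FlatHomByDist} (both sides being symmetric in $T_0,T_1$), so the plan is to prove the reverse bound $\mathrm{d}_{\overline{\Omega}}(T_0,T_1) \le \Fbf^\circ_{\overline{\Omega}}(T_1-T_0)$. We may assume the right-hand side is finite and, since the currents have compact support so that $\Omega$ may be taken bounded, fix $\varepsilon > 0$ and pick $Z \in \Nrm_{k+1}(\overline{\Omega})$ with $\partial Z = T_1 - T_0$ and $\Mbf(Z) \le \Fbf^\circ_{\overline{\Omega}}(T_1-T_0) + \varepsilon$.

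The obstruction to applying Lemma~\ref{lm:BigDistByFlatHom} directly is that $T_0,T_1$ are only supported in $\overline{\Omega}$ rather than in a compact subset of $(\overline{\Omega})^\circ$; here good directions enter. I would fix a small $\varepsilon' \in (0,\varepsilon)$, let $f := f_{\Omega,\varepsilon'}\colon \overline{\Omega}\to\Omega$ be the map from Proposition~\ref{pp:ContractInto} (so $\Lip(f) \le 1+\varepsilon'$, $\Vert f - \mathrm{id}_{\overline{\Omega}} \Vert_\infty < \varepsilon'$, and the affine homotopy $H := H_{\Omega,\varepsilon'}$ from $\mathrm{id}_{\overline{\Omega}}$ to $f$ keeps $[0,1]\times\overline{\Omega}$ inside $\overline{\Omega}$), and set $A := f(\overline{\Omega})$. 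This $A$ is compact and, as $\Omega$ is open, $A \subset \Omega \subset (\overline{\Omega})^\circ$. Then $f_*T_0, f_*T_1 \in \Zrm_k(A)$ and $f_*T_1 - f_*T_0 = \partial(f_*Z)$ with $\supp f_*Z \subset A$, so the standard pushforward mass estimate yields
\begin{align*}
\Fbf^\circ_A(f_*T_1 - f_*T_0) \le \Mbf(f_*Z) \le (1+\varepsilon')^{k+1}\bigl(\Fbf^\circ_{\overline{\Omega}}(T_1-T_0) + \varepsilon\bigr).
\end{align*}
Applying Lemma~\ref{lm:BigDistByFlatHom} with this $A \subset (\overline{\Omega})^\circ$ and $K = \overline{\Omega}$ then produces $S' \in \Trajlip_{1+k,\overline{\Omega}}(f_*T_0,f_*T_1)$ with $\Var(S') \le \Fbf^\circ_A(f_*T_1 - f_*T_0) + \varepsilon$.

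Next I would connect $T_i$ to $f_*T_i$ by a cheap Lipschitz trajectory contained in $[0,1]\times\overline{\Omega}$. Writing $\overline{H}(t,x) := (t,H(t,x))$ and $W_i := \overline{H}_*([\![0,1]\!] \times T_i)$, the homotopy formula together with $\partial T_i = 0$ gives $\partial W_i = \delta_1 \times f_*T_i - \delta_0 \times T_i$, while $\supp W_i \subset [0,1]\times H([0,1]\times\overline{\Omega}) \subset [0,1]\times\overline{\Omega}$; Proposition~\ref{pp:AffHomVarEst} shows $W_i \in \Nlip_{1+k}([0,1]\times\overline{\Omega})$ and gives
\begin{align*}
\Var(W_i) \le \int_{\R^d} \vert f(x) - x \vert \bigl(1 + \Vert Df(x)\Vert^k\bigr) \dd\Vert T_i\Vert(x) \le \varepsilon'\bigl(1 + (1+\varepsilon')^k\bigr)\Mbf(T_i).
\end{align*}
I then form the concatenation $S := W_1^{-1} \circ S' \circ W_0$, which lies in $\Trajlip_{1+k,\overline{\Omega}}(T_0,T_1)$ since the boundary data match ($\partial^+W_0 = f_*T_0 = \partial^-S'$ and $\partial^+S' = f_*T_1 = \partial^+W_1 = \partial^-W_1^{-1}$), and use additivity of the variation under concatenation and its invariance under reversal to obtain
\begin{align*}
\mathrm{d}_{\overline{\Omega}}(T_0,T_1) \le \Var(S) \le \varepsilon'\bigl(1 + (1+\varepsilon')^k\bigr)\bigl(\Mbf(T_0)+\Mbf(T_1)\bigr) + (1+\varepsilon')^{k+1}\bigl(\Fbf^\circ_{\overline{\Omega}}(T_1-T_0)+\varepsilon\bigr) + \varepsilon.
\end{align*}
Letting first $\varepsilon' \downarrow 0$ and then $\varepsilon \downarrow 0$ gives $\mathrm{d}_{\overline{\Omega}}(T_0,T_1) \le \Fbf^\circ_{\overline{\Omega}}(T_1-T_0)$, which with \eqref{FlatHomByDist} completes the argument. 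The genuinely new point compared to Proposition~\ref{pp:DistByFlatHom} — and the step I expect to carry the weight — is the replacement of a retraction onto $\overline{\Omega}$ (which would force the factor $\rho^{k+1}$) by the good-directions map $f_{\Omega,\varepsilon'}$, whose Lipschitz constant and displacement both tend to the optimal values, so that no excess survives in the limit; the cost is that the variations of the connecting currents $W_i$ must be controlled, which is exactly where the normal-current pushforward estimates of Section~\ref{s:VarEsts} (Proposition~\ref{pp:AffHomVarEst}) are required.
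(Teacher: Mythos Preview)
Your proof is correct and follows essentially the same route as the paper's: push $T_0,T_1$ into the interior via the good-directions map $f_{\Omega,\varepsilon'}$ from Proposition~\ref{pp:ContractInto}, apply Lemma~\ref{lm:BigDistByFlatHom} to the pushed currents, connect $T_i$ to $f_*T_i$ by the affine-homotopy trajectories controlled through Proposition~\ref{pp:AffHomVarEst}, concatenate, and let the parameters tend to zero. The only cosmetic differences are that the paper uses a single parameter $\varepsilon$ in place of your $\varepsilon,\varepsilon'$ and chooses an intermediate compact $A$ with $f(\overline{\Omega}) \subset A^\circ$ rather than $A = f(\overline{\Omega})$; both choices satisfy the hypothesis $A \subset (\overline{\Omega})^\circ$ of Lemma~\ref{lm:BigDistByFlatHom}.
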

	
	\begin{proof} It suffices to show that $\mathrm{d}_{\overline{\Omega}}(T_{0},T_{1}) \leq \Fbf^{\circ}_{\overline{\Omega}}(T_{1} - T_{0})$, and it is not restrictive to assume that $\Fbf^{\circ}_{\overline{\Omega}}(T_{1} - T_{0}) < \infty$. Fix $\varepsilon > 0$, and let $f_{\Omega,\varepsilon}: \overline{\Omega} \to \Omega$ and $H_{\Omega,\varepsilon}: [0,1] \times \overline{\Omega} \to \overline{\Omega}$ be the Lipschitz maps constructed in Proposition~\ref{pp:ContractInto}. Choose an intermediate compact set $A \subset \Omega$ with $f_{\Omega,\varepsilon}(\overline{\Omega}) \subset \interior A$. For $i \in \{0,1\}$, set $\tilde{T}_{i} := (f_{\Omega,\varepsilon})_{*}T_{i}$. One observes that
	\begin{align*}
		\Fbf^\circ_{A}(\tilde{T}_{1}-\tilde{T}_{0}) \leq \Lip(f_{\Omega,\varepsilon})^{k+1}\Fbf^\circ_{\overline{\Omega}}(T_{1}-T_{0}) < \infty.
	\end{align*}
	Since $\tilde{T}_{0},\tilde{T}_{1} \in \Zrm_{k}(A)$, we have $\mathrm{d}_{\overline{\Omega}}(\tilde{T}_{0},\tilde{T}_{1}) \leq \Fbf_{A}^{\circ}(\tilde{T}_{1} - \tilde{T}_{0})$ from Lemma~\ref{lm:BigDistByFlatHom} and thus we can find $\tilde{S} \in \Trajlip_{1+k,\overline{\Omega}}(\tilde{T}_{0},\tilde{T}_{1})$ with 
	\begin{align*}
		\Var(\tilde{S}) \leq \Lip(f_{\Omega,\varepsilon})^{k+1}\Fbf^\circ_{\overline{\Omega}}(T_{1}-T_{0}) + \varepsilon.
	\end{align*}
	Now let $\overline{H_{\Omega,\varepsilon}}: [0,1] \times \overline{\Omega} \to [0,1] \times \overline{\Omega}$ where $\overline{H_{\Omega,\varepsilon}}(t,x) = (t,H_{\Omega,\varepsilon}(t,x))$ for $(t,x) \in [0,1] \times \overline{\Omega}$ and, for $i \in \{0,1\}$, define 
	\begin{align*}
		\tilde{S}_{i} := (\overline{H_{\Omega,\varepsilon}})_{*}([\![0,1]\!] \times T_{i}).
	\end{align*}
	Since $\Vert f_{\Omega,\varepsilon} - \mathrm{id}_{\overline{\Omega}} \Vert_{\infty} < \varepsilon$, we can use Proposition~\ref{pp:AffHomVarEst} to deduce that, for $i \in \{0,1\}$, $\tilde{S}_{i} \in \Trajlip_{1+k,\overline{\Omega}}(T_{i},\tilde{T}_{i})$ and
	\begin{align*}
		\Var(\tilde{S}_{i}) \leq \varepsilon(1+\Lip(f_{\Omega,\varepsilon})^{k})\Mbf(T_{i}).
	\end{align*}
	Set $S := \tilde{S}_{1}^{-1} \circ \tilde{S} \circ \tilde{S}_{0}$. Since $\Lip(f_{\Omega,\varepsilon}) \leq 1 + \varepsilon$, we obtain
	\begin{align*}
		\Var(S) \leq (1+\varepsilon)^{k+1}\Fbf^\circ_{\overline{\Omega}}(T_{1}-T_{0}) + \varepsilon(1 + (1+\varepsilon)^{k})(\Mbf(T_{0})+\Mbf(T_{1})).
	\end{align*}
	The result follows since $\varepsilon > 0$ was arbitrary.
	\end{proof}
	
	\subsection{The Space-Time Connectivity Theorem}\label{s:STConnect} Finally, we give a proof of the \emph{Space-Time Connectivity Theorem} discussed in the introduction, where a simplified version was presented as Theorem \ref{tm:AbbrvSTConnect}:
    
	\begin{theorem}[Space-Time Connectivity Theorem]\label{tm:Equivalence} Let $K \subset \R^d$ be a compact Lipschitz neighbourhood retract. Let $(T_j)_j \subset \Zrm_{k}(K)$ and take $T \in \Zrm_{k}(K)$. Suppose that there is $M > 0$ with $\Mbf(T_{j}) \leq M$ for $j \in \N$ and $\Mbf(T) \leq M$. Then, 
	\begin{align}\label{DistVsWkStar}
		\mathrm{d}_{K}(T_{j},T) \to 0 \hspace{2em} \text{if and only if} \hspace{2em} T_{j} \wkto T \hspace{0.2em} \text{and $T - T_{j} \in \Brm_{k}(K)$ eventually}.
	\end{align}
    Furthermore, in this case, for all $j \in \N$ along a (not relabelled) subsequence, there is $S_{j} \in \Nlip_{1+k}([0,1] \times K)$ such that
	\begin{align*}
		\partial S_{j} &= \delta_{1} \times T - \delta_{0} \times T_{j}
	\end{align*}
	and
	\begin{align*}
		\Var(S_{j}) &\to 0, & \limsup_{j \to \infty} \Vert S_{j} \Vert_{\Lrm^{\infty}} \leq C \cdot \limsup_{j \to \infty} \Mbf(T_{j})
	\end{align*}
	where $C > 0$ depends only on $k$, $d$, and $K$.
	\end{theorem}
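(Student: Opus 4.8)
The plan is to derive the theorem entirely from the results of Section~\ref{s:def_distance}, together with the comparison between the homogeneous and ordinary flat norms on boundaries (Corollary~\ref{cr:HomFlatVsFlat}) and the classical equivalence, on sequences of uniformly mass-bounded normal currents, of $\Fbf_K$-convergence with weak* convergence (see~\cite{Federer69book}). No new geometry is needed here: the substantive work has already gone into the Dynamical Deformation Theorem (Theorem~\ref{tm:STDeform}), the polyhedral stretching of fillings (Lemma~\ref{lm:Stretch}), and the bi-Lipschitz comparison $\mathrm{d}_K \lesssim \Fbf_K^\circ$ of Proposition~\ref{pp:DistByFlatHom}.

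For the equivalence~\eqref{DistVsWkStar} I would argue as follows. If $\mathrm{d}_K(T_j,T) \to 0$, then $\mathrm{d}_K(T_j,T) < \infty$ for $j$ large, so $\Trajlip_{1+k,K}(T_j,T) \neq \varnothing$ and hence $T - T_j \in \Brm_k(K)$ for such $j$; moreover~\eqref{FlatHomByDist} gives $\Fbf_K(T - T_j) \leq \Fbf_K^\circ(T - T_j) \leq \mathrm{d}_K(T_j,T) \to 0$, which, since $\Mbf(T_j) \leq M$, forces $T_j \wkto T$. Conversely, if $T_j \wkto T$ and $T - T_j \in \Brm_k(K)$ eventually, then $T_j \wkto T$ together with the uniform mass bound gives $\Fbf_K(T - T_j) \to 0$ by the flat/weak* equivalence; since $T - T_j \in \Zrm_k(K)$ is eventually a boundary, Corollary~\ref{cr:HomFlatVsFlat} upgrades this to $\Fbf_K^\circ(T - T_j) \to 0$, and Proposition~\ref{pp:DistByFlatHom} then yields $\mathrm{d}_K(T_j,T) \leq C \Fbf_K^\circ(T - T_j) \to 0$ with $C = C(k,d,K)$.

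For the quantitative conclusion I would first pass to the (not relabelled) subsequence along which $T - T_j \in \Brm_k(K)$ holds for every $j$, and then apply the quantitative form of Proposition~\ref{pp:DistByFlatHom} with $T_0 = T_j$, $T_1 = T$, and $\varepsilon = 1/j$. This directly produces $S_j \in \Trajlip_{1+k,K}(T_j,T)$, that is, $S_j \in \Nlip_{1+k}([0,1]\times K)$ with $\partial S_j = \delta_1 \times T - \delta_0 \times T_j$, obeying $\Var(S_j) \leq C\Fbf_K^\circ(T - T_j) + 1/j$ and $\Vert S_j \Vert_{\Lrm^\infty} \leq C(\Mbf(T_j) + \Mbf(T))$. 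Since $\Fbf_K^\circ(T - T_j) \to 0$ was already shown, $\Var(S_j) \to 0$. Finally, weak* lower semicontinuity of the mass gives $\Mbf(T) \leq \liminf_j \Mbf(T_j) \leq \limsup_j \Mbf(T_j)$, so
\[
	\limsup_{j\to\infty} \Vert S_j \Vert_{\Lrm^\infty} \leq C\Bigl(\limsup_{j\to\infty}\Mbf(T_j) + \Mbf(T)\Bigr) \leq 2C\limsup_{j\to\infty}\Mbf(T_j),
\]
and enlarging the constant $C$ gives the stated bound.

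I do not expect a genuine obstacle. The two points needing slight care are: invoking the flat-versus-weak* equivalence in the mass-bounded setting, which I would cite rather than reprove; and observing that the $\Mbf(T)$ appearing in the $\Lrm^\infty$-estimate is dominated by $\limsup_j \Mbf(T_j)$ by lower semicontinuity — this is exactly what makes the final constant depend only on $k$, $d$, and $K$, as claimed.
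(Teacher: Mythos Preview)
Your proof of the equivalence~\eqref{DistVsWkStar} is essentially identical to the paper's, and your argument for the quantitative conclusion is correct but \emph{genuinely simpler} than the paper's.

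The paper does not invoke Proposition~\ref{pp:DistByFlatHom} directly for the $\Lrm^\infty$-bound. Instead it introduces an auxiliary Lemma~\ref{lm:EquivDeform}, showing that the mass-bounded boundaries in $\Brm_k(K)$ can be covered by finitely many ``deformation balls'' of radius $s$ with controlled $\Lrm^\infty$-norm. It then runs a diagonal argument across scales $s = 2^{-(n+N)}$ to build telescoping trajectories $\tilde S_j$ connecting $T_j$ to $T_{j+1}$, concatenates the chain $\tilde S_{j+m-1}\circ\cdots\circ\tilde S_j$, rescales via Lemma~\ref{lm:Rescale}, and finally passes to the limit $m\to\infty$ using the compactness Proposition~\ref{pp:SpaceTimeCpct} to obtain $S_j$ connecting $T_j$ directly to $T$.

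You bypass all of this by observing that the ``In fact'' clause of Proposition~\ref{pp:DistByFlatHom} already furnishes a single trajectory $S_j$ from $T_j$ to $T$ with \emph{both} the variation and the $\Lrm^\infty$-estimate, and that $\Mbf(T)\le\limsup_j\Mbf(T_j)$ by weak* lower semicontinuity absorbs the extra $\Mbf(T)$ term into the constant. This avoids Lemma~\ref{lm:EquivDeform}, the diagonal extraction, and the compactness step entirely, and does not even require passing to a subsequence beyond discarding the finitely many $j$ for which $T-T_j\notin\Brm_k(K)$. The paper's route has the feature that the trajectory is built to ``visit'' the intermediate $T_{j+1},T_{j+2},\dotsc$ in order, which could matter in applications where the path through the sequence is relevant; for the theorem as stated, however, your direct argument suffices and is cleaner.
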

	
	In what follows, we shall see that the first conclusion~\eqref{DistVsWkStar} of Theorem \ref{tm:Equivalence} is a straightforward consequence of several results that we have already obtained. However, the second conclusion requires a more refined argument. To this end, let $K \subset \R^{d}$ be a compact Lipschitz neighbourhood retract. For $T \in \Nrm_{k}(K)$, we define:
    \begin{align*}
        \STF{T}:=\inf\{\Var(S)+\Vert S \Vert_{\Lrm^{\infty}}: S \in \Trajlip_{1+k,K}(T,0)\}.
    \end{align*}
    Notice that $\STF{T} = \infty$ if $\Trajlip_{1+k,K}(T,0) = \varnothing$. It follows from Proposition \ref{pp:DistByFlatHom} that $\STF{T} < \infty$ if $T \in \Brm_{k}(K)$, and one can show that $\STF{\cdot}$ is a norm on $\Brm_{k}(K)$.
	
	\begin{lemma}\label{lm:EquivDeform} Let $K \subset \R^d$ be a compact Lipschitz neighbourhood retract and fix $M > 0$. For $s > 0$, there is a finite set $\Gamma_{s} \subset \Brm_{k}(K)$ with the following property: For $T \in \Brm_{k}(K)$ with $\Mbf(T) \leq M$, there is $R \in \Gamma_{s}$ and $S \in \Trajlip_{1+k,K}(T,R)$ with 
	\begin{align}\label{EquivDeformEsts}
		\Var(S) &\leq s, & \Vert S \Vert_{\Lrm^{\infty}} &\leq C \cdot \Mbf(T) + s,
	\end{align}
	where $C$ depends only on $k$, $d$, and $K$.
	\end{lemma}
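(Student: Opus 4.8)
The plan is to deform $T$ into a skeletal polyhedral current via the Dynamical Deformation Theorem, then to \emph{round} that skeletal current to a lattice \emph{inside the finite-dimensional space of skeletal cycles that are fillable}, and finally to push everything back into $K$ using a Lipschitz retraction. Fix a bounded neighbourhood $U$ of $K$ and a Lipschitz retraction $f \colon U \to K$; write $L := \Lip(f)$, and recall that $f_{*}T = T$ whenever $\supp T \subset K$ (the pushforward depends only on the restriction of $f$ to $\supp T$, which here is the identity). Since the parameter $\varepsilon$ plays no essential role we simply take $\varepsilon := 1$. Given $s \in (0,1)$, fix a deformation scale $\delta > 0$ so small that $K' := \{x : \mathrm{dist}(x,K) \leq 2d\delta\} \subset U$ and $L^{k+1}\gamma\delta M \leq s/3$, where $\gamma = \gamma(k,d)$ is the constant from Theorem~\ref{tm:STDeform}.

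\emph{Deform and round.} For $T \in \Brm_{k}(K)$ with $\Mbf(T) \leq M$, Theorem~\ref{tm:STDeform} at scale $\delta$ yields $P \in \Srm_{k,\delta}(\R^{d})$ and a Lipschitz trajectory $S_{1} \in \Nlip_{1+k}([0,1] \times K')$ with $\partial S_{1} = \delta_{1} \times P - \delta_{0} \times T$, $\Mbf(P) \leq \gamma\Mbf(T)$, $\Var(S_{1}) \leq \gamma\delta\Mbf(T)$, $\Vert S_{1}\Vert_{\Lrm^{\infty}} \leq \gamma\Mbf(T)$, and $\supp P \subset K'$; since $\partial(\pbf_{*}S_{1}) = P - T$ and $T \in \Brm_{k}(K) \subset \Brm_{k}(K')$ we get $P \in \Brm_{k}(K')$. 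Let $W$ be the span of those scaled skeletal $k$-cubes whose support meets $K'$; by Proposition~\ref{pp:Skeletal} this is finite-dimensional, and $V := W \cap \Brm_{k}(K')$ is a subspace containing $P$. Fixing a basis $B_{1},\dots,B_{m}$ of $V$ and writing $P = \sum_{i}\lambda_{i}B_{i}$, set $\hat{P} := \sum_{i}\hat\lambda_{i}B_{i}$ with $\hat\lambda_{i} \in \eta\Z$ a nearest rounding of $\lambda_{i}$, for a parameter $\eta>0$; then $\hat{P},\, P-\hat{P}\in V\subset\Brm_{k}(K')$ and $\Mbf(P-\hat P)\leq \tfrac{\eta}{2}\sum_{i}\Mbf(B_{i})=:\beta$, and we fix $\eta$ so small that the slack terms appearing below are each $\leq s$. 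The set
\[
  \Gamma_{s} := f_{*}\{\, \Sigma \in V : \text{$\Sigma$ has coordinates in $\eta\Z$ and } \Mbf(\Sigma) \leq \gamma M + \beta \,\}
\]
is a \emph{finite} subset of $\Brm_{k}(K)$: the displayed set is a bounded subset of a lattice in the finite-dimensional space $V$ (as $\Mbf$ restricts to a norm there), and $f_{*}$ carries cycles fillable in $K'\subset U$ to cycles fillable in $K$. Moreover $\Mbf(\hat P)\leq\Mbf(P)+\beta\leq\gamma M+\beta$, so $R := f_{*}\hat{P}\in\Gamma_{s}$.

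\emph{Push back and connect.} Let $\overline{f} := \mathrm{id}_{\R}\times f$ and $P' := f_{*}P$. By Proposition~\ref{pp:SpaceRescale} (using $f_{*}T=T$), $\overline{f}_{*}S_{1}\in\Trajlip_{1+k,K}(T,P')$ with $\Var(\overline{f}_{*}S_{1})\leq L^{k+1}\gamma\delta\Mbf(T)$ and $\Vert\overline{f}_{*}S_{1}\Vert_{\Lrm^{\infty}}\leq L^{k}\gamma\Mbf(T)$, and $\Mbf(P')\leq L^{k}\Mbf(P)\leq L^{k}\gamma\Mbf(T)$. Since $P'-R = f_{*}(P-\hat P)\in\Brm_{k}(K)$, Proposition~\ref{pp:HomFlatByMass} gives $\Fbf_{K}^{\circ}(P'-R)\leq C_{1}\Mbf(P'-R)\leq C_{1}L^{k}\beta$, and then Proposition~\ref{pp:DistByFlatHom}, applied in $K$ with error parameter $s/3$, produces $S_{3}\in\Trajlip_{1+k,K}(P',R)$ with $\Var(S_{3})\leq C_{2}C_{1}L^{k}\beta + s/3$ and $\Vert S_{3}\Vert_{\Lrm^{\infty}}\leq C_{2}(\Mbf(P')+\Mbf(R))\leq C_{2}L^{k}(2\gamma\Mbf(T)+\beta)$, where $C_{1},C_{2}$ depend only on $k,d,K$. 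Concatenating, $S := S_{3}\circ\overline{f}_{*}S_{1}\in\Trajlip_{1+k,K}(T,R)$, and by additivity of $\Var$ and the maximum rule for $\Vert\cdot\Vert_{\Lrm^{\infty}}$ under concatenation we obtain $\Var(S)\leq L^{k+1}\gamma\delta M + C_{2}C_{1}L^{k}\beta + s/3 \leq s$ and $\Vert S\Vert_{\Lrm^{\infty}}\leq C\Mbf(T)+s$ with $C := (2C_{2}+1)L^{k}\gamma$, depending only on $k,d,K$, provided $\delta$ and $\eta$ were chosen as above.

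The main obstacle is the rounding step: a coordinate-wise rounding of $P$ would in general yield a $\hat P$ for which $P-\hat P$ is a nontrivial skeletal \emph{cycle} rather than a boundary (for instance if $K$ is an annulus and $P$ wraps the hole), so that no trajectory from $P$ to $\hat P$ inside $K$ would exist and Proposition~\ref{pp:DistByFlatHom} could not be invoked. Rounding within a basis of the finite-dimensional space $V$ of skeletal cycles that bound in $K'$ keeps $\hat P$ and $P-\hat P$ in the fillable class, which is precisely what makes the connecting trajectory available while simultaneously keeping $\Mbf(\hat P)$ comparable to $\Mbf(T)$ — the feature responsible for the $\Lrm^{\infty}$-bound being proportional to $\Mbf(T)$ rather than to $M$. (A secondary point requiring care is the bookkeeping of supports: the deformation and the rounding are carried out in the auxiliary compact $K'\subset U$, and only the pushforward by the retraction brings the objects back into $K$.)
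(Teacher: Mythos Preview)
Your proof is correct and follows essentially the same strategy as the paper's: apply the Dynamical Deformation Theorem to push $T$ onto a skeletal current $P$, exploit that the relevant skeletal cycles form a bounded subset of a finite-dimensional space, pick a nearby element from a finite set, connect via Propositions~\ref{pp:HomFlatByMass} and~\ref{pp:DistByFlatHom}, and retract into $K$. The only real difference is in how the finite set is produced: the paper uses $\Mbf$-compactness of a bounded set in a finite-dimensional space to extract a finite $\delta$-net, whereas you round coordinates to a lattice in a fixed basis of $V = W \cap \Brm_{k}(K')$. Both devices serve the same purpose, and your version is arguably cleaner in that it makes explicit why the rounding must take place \emph{inside} the fillable subspace (so that $P-\hat P$ remains a boundary and Proposition~\ref{pp:DistByFlatHom} applies); the paper handles this implicitly by intersecting with $\Brm_{k}(K)$ in the definition of $\Gamma$.
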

	
	\begin{proof} Fix $s > 0$. Since $K \subset \R^{d}$ is a compact Lipschitz neighbourhood retract, there is a bounded neighbourhood $U \subset \R^{d}$ of $K$ and a Lipschitz retraction $r: U \to K$. Fix $\varepsilon > 0$ with $K_{2d\varepsilon} \subset U$ (to be chosen later), and define
    \begin{align*}
        \Gamma := \{r_{*}P: P \in \Srm_{k,\varepsilon}(\R^{d}), \hspace{0.2em} \supp P \subset U, \hspace{0.2em} \Mbf(P) \leq \gamma M\} \cap \Brm_{k}(K),
    \end{align*}
    where $\gamma > 0$ is the constant from the Dynamical Deformation Theorem~\ref{tm:STDeform}. Notice that $\Gamma$ is a bounded subset of a finite dimensional space, so there is a finite set $\Gamma_{s} \subset \Gamma$ such that, for $T \in \Gamma$, there is $R \in \Gamma_{s}$ with 
    \begin{align*}
        \STF{R-T} \leq \varepsilon.
    \end{align*}
    Take $T \in \Brm_{k}(K)$. We apply the Dynamical Deformation Theorem to $T$ (at scale $\varepsilon$) to obtain $P \in \Srm_{k,\varepsilon}(\R^{d})$ with $r_{*}P \in \Gamma$ and $\tilde{S}_{1} \in \Nlip_{1+k}([0,1] \times \R^{d})$ with 
    \begin{align*}
        \partial \tilde{S}_{1} &= \delta_{1} \times P - \delta_{0} \times T, & \supp \tilde{S}_{1} &\subset [0,1] \times U.
    \end{align*}
    Since $r_{*}P \in \Gamma$, there is $R \in \Gamma_{s}$ with $\STF{R-r_{*}P} < \varepsilon$. It follows that there is $\tilde{S}_{2} \in \Trajlip_{1+k,K}(R-r_{*}P,0)$ with 
    \begin{align}\label{EquivDeformEst1}
        \Var(\tilde{S}_{2}) &\leq 2\varepsilon, & \Vert \tilde{S}_{2} \Vert_{\Lrm^{\infty}} &\leq 2\varepsilon.
    \end{align}
    Let $\overline{r} := \mathrm{id}_{\R} \times r$. Define 
    \begin{align*}
        S_{1} &:= \overline{r}_{*}\tilde{S}_{1}, & S_{2} &:= \tilde{S}_{2} + [\![0,1]\!] \times r_{*}P.
    \end{align*}
    Set $S := S_{2}^{-1} \circ S_{1}$ and notice that $S \in \Trajlip_{1+k,K}(T,R)$. We can use \eqref{EquivDeformEst1}, Proposition \ref{pp:SpaceRescale} and the estimates from the Dynamical Deformation Theorem to deduce that
    \begin{align*}
        \Var(S) &=  \varepsilon(2+\gamma M\cdot\Lip(r)^{k+1}) & \Vert S \Vert_{\Lrm^{\infty}} &\leq \gamma\cdot \Lip(r)^{k}\cdot\Mbf(T)+2\varepsilon.
    \end{align*}
    The result follows with $C = \gamma\cdot\Lip(r)^{k}$ after choosing $\varepsilon > 0$ small enough that \eqref{EquivDeformEsts} holds.
	\end{proof}
	
	\begin{proof}[Proof of Theorem~\ref{tm:Equivalence}] Notice that $T_{j} \wkto T$ if and only if $\Fbf_{K}(T_{j} - T) \to 0$ (see, for example,~\cite[Theorem 7.1]{FedererFleming60}), and we have $\mathrm{d}_{K}(T_{j},T) \to 0$ if and only if $\Fbf_{K}^{\circ}(T_{j}-T) \to 0$ by~\eqref{FlatHomByDist} and~\eqref{DistByFlatHom}. We can therefore conclude~\eqref{DistVsWkStar} provided that $\Fbf_{K}^{\circ}(T_{j}-T) \to 0$ if and only if $\Fbf_{K}(T_{j}-T) \to 0$ and $T_{j}-T \in \Brm_{k}(K)$ eventually; this follows from the fact that $\Fbf_{K} \leq \Fbf_{K}^{\circ}$ (for one direction) and Corollary~\ref{cr:HomFlatVsFlat} (for the other direction).
		
	For the second conclusion, notice that it is not restrictive to assume that $(T-T_{j})_j \subset \Brm_{k}(K)$.Let $\Gamma_{1/2} \subset \Brm_{k}(K)$ be the set obtained in Lemma \ref{lm:EquivDeform} for $s=1/2$. Since $\Gamma_{1/2}$ is finite, there is a $R_{1} \in \Brm_{k}(K)$ (not depending on $j$) and a (not relabelled) subsequence of $(T_{j}-T)_{j}$ such that for every $j$ there is $V_{1,j} \in \Trajlip_{1+k,K}(T-T_{j},R_1)$ with
	\begin{align*}
		\Var(V_{1,j}) &\leq \frac12, & \Vert V_{1,j} \Vert_{\Lrm^{\infty}} &\leq C \cdot \Mbf(T-T_{j}) + \frac12,
	\end{align*}
	where $C > 0$ depends only on $k$, $d$, and $K$. We then successively apply the same argument with $s=2^{-n}$ for $n \in \N$ and select a diagonal subsequence to obtain a (not relabelled) subsequence of $(T_{j}-T)_{j}$ and a sequence $(R_{n})_{n} \subset \Brm_{k}(K)$ such that, for $n,j \in \N$ with $j \geq n$, there is $V_{n,j} \in \Trajlip_{1+k,K}(T-T_{j},R_{n})$ with
	\begin{align*}
		\Var(V_{n,j}) &\leq 2^{-n}, & \Vert V_{n,j} \Vert_{\Lrm^{\infty}} &\leq C \cdot \Mbf(T-T_{j}) + 2^{-n}.
	\end{align*}
    For $j \in \N$, define
	\begin{align*}
		\tilde{S}_{j} := [\![0,1]\!] \times T - V_{j,j+1}^{-1} \circ V_{j,j}.
	\end{align*}
	Notice that $\tilde{S}_{j} \in \Trajlip_{1+k,K}(T_{j},T_{j+1})$. We can estimate
	\begin{align*}
		\Var(\tilde{S}_{j}) &\leq 2^{-j+1}, & \Vert \tilde{S}_{j} \Vert_{\Lrm^{\infty}} &\leq (C+1)\Mbf(T) + C\cdot\max\{\Mbf(T_{j}),\Mbf(T_{j+1})\} + 2^{-j}.
	\end{align*}
	Fix $j \in \N$. For $m \in \N$, we define $S^{m}_{j} := \tilde{S}_{j+m-1} \circ \cdots \circ \tilde{S}_{j}$. Notice that, for $m \in \N$, we have
	\begin{align*}
		\Var(S^{m}_{j}) &= \sum_{i=0}^{m-1} \Var(\tilde{S}_{j+i}) \leq 2^{-j+1} \sum_{i=0}^{m-1}2^{-i} \leq 2^{-j+2}
	\end{align*}
    and
    \begin{align*}
        \Vert S^{m}_{j} \Vert_{\Lrm^{\infty}} \leq (C+1)\Mbf(T) + C \cdot \sup_{i \geq j} \Mbf(T_{i}) + 2^{-j}.
    \end{align*}
    Furthermore, we have 
	\begin{align*}
		\Var(\partial S^{m}_{j}) &\leq \Mbf(T_{j}) + \Mbf(T_{j+m}) \leq 2M, & \Vert \partial S^{m}_{j} \Vert_{\Lrm^{\infty}} &= 0.
	\end{align*}
	Notice that we can also apply Lemma~\ref{lm:Rescale} to ensure that, without loss of generality, $(\Lip(S^{m}_{j}))_{m}$ is uniformly bounded. We can therefore use Proposition~\ref{pp:SpaceTimeCpct} to pass to the limit $m \to \infty$; this yields 
	\begin{align*}
		S_{j} \in \Trajlip_{1+k,K}(T_{j},T)
	\end{align*}
	with 
	\begin{align*}
		\Var(S_{j}) &\leq 2^{-j+2}, & \Vert S_{j} \Vert_{\Lrm^{\infty}} \leq (C+1)\Mbf(T) + C \cdot \sup_{i \geq j} \Mbf(T_{i}) + 2^{-j}.
	\end{align*}
	Since $T_{j} \wkto T$, we have
	\begin{align}\label{EquivMassEst}
		\Vert S_{j} \Vert_{\Lrm^{\infty}} \leq (C+1)\liminf_{i \to \infty}\Mbf(T_{i}) + C \cdot \sup_{i \geq j} \Mbf(T_{i}) + 2^{-j}.
	\end{align}
	Taking the upper limit of~\eqref{EquivMassEst} and adjusting $C$ accordingly yields the result.
	\end{proof}

    \begin{remark} It is worth highlighting the statement of Theorem \ref{tm:Equality} in the case $k=0$, where it applies to finite signed measures (which are identified with normal $0$-currents under an assumption of finite mass). Under a uniform mass bound, the weak* convergence of measures is equivalent to the convergence in the $p$-Wasserstein metric for $p \in[1,\infty)$ (see, for example, \cite[Proposition 7.1.5]{AmbrosioGigliSavare05book}). In turn, when $p=1$, the Wasserstein metric is the same as the flat norm. Therefore, the Equality Theorem \ref{tm:Equality} in this setting yields a dynamical characterisation of the Wasserstein distance, proving that it is equivalent to the convergence in the (Lipschitz) deformation distance. By Theorem~\ref{tm:Equivalence} we may also require for the slices (that is, the intermediate measures between the two endpoints) to have uniformly bounded mass.
    \end{remark}

	\appendix
	
	\section{Proof of Proposition~\ref{pp:InwardPointingVecFld}}\label{ax:BZ}
	
	Although Proposition~\ref{pp:InwardPointingVecFld} is proven in~\cite{BallZarnescu17}, sections of the proof can be generalised (see Lemma~\ref{lm:ParamsAreCts}) and for the reader's convenience we give extra details in our notation. We start with two preliminary results, the first of which will allow us to work at $0 \in \R^d$ without loss of generality:
	
	\begin{lemma}\label{lm:Translate} Let $\Omega \subset \R^d$ be open and let $\delta > 0$. Let $x_{0} \in \R^d$ with $\partial\Omega \cap B(x_{0},\delta) \neq \varnothing$ and let $v \in \mathbb{S}^{d-1}$ be a good direction for $\Omega$ at $x_{0}$ at scale $\delta$. Fix $a \in \R^d$. Then $v$ is a good direction for $\Omega - a$ at $x_{0}-a$ at scale $\delta$.
	\end{lemma}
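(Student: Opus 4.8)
The plan is to transport the parameterising function by the translation $x \mapsto x - a$. Suppose $v$ is a good direction for $\Omega$ at $x_{0}$ at scale $\delta$, witnessed by a continuous $g \colon \Pi(x_{0},v) \to \R$ as in Definition~\ref{dfn:GoodDirections}. Since $\Pi(x_{0}-a,v) = \Pi(x_{0},v) - a$, the natural candidate for the parameterisation of $\partial(\Omega - a)$ near $x_{0}-a$ is the continuous function $\tilde{g} \colon \Pi(x_{0}-a,v) \to \R$ defined by $\tilde{g}(y) := g(y+a)$. Note first that $\partial(\Omega-a) \cap B(x_{0}-a,\delta) = (\partial\Omega \cap B(x_{0},\delta)) - a \neq \varnothing$, so the hypotheses of the definition are in place for $\Omega - a$ at $x_{0}-a$.

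Next I would record the elementary translation identities. For $x \in \R^d$, using~\eqref{OrthProjDefns} one has
\begin{align*}
	\pi_{x_{0}-a,v}(x) &= \pi_{x_{0},v}(x+a) - a, & \pi^{\perp}_{x_{0}-a,v}(x) &= \pi^{\perp}_{x_{0},v}(x+a) - a,
\end{align*}
and since $\phi_{x_{0}-a,v}(t) = (x_{0}-a) + tv = \phi_{x_{0},v}(t) - a$, it follows that
\begin{align*}
	\phi^{-1}_{x_{0}-a,v}\bigl(\pi^{\perp}_{x_{0}-a,v}(x)\bigr) = \phi^{-1}_{x_{0},v}\bigl(\pi^{\perp}_{x_{0},v}(x+a)\bigr), \qquad \tilde{g}\bigl(\pi_{x_{0}-a,v}(x)\bigr) = g\bigl(\pi_{x_{0},v}(x+a)\bigr).
\end{align*}

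Finally I would plug these identities into the two defining set equalities. For $x \in B(x_{0}-a,\delta)$, equivalently $x+a \in B(x_{0},\delta)$, the inequality $\tilde{g}(\pi_{x_{0}-a,v}(x)) < \phi^{-1}_{x_{0}-a,v}(\pi^{\perp}_{x_{0}-a,v}(x))$ is, by the displayed identities, exactly $g(\pi_{x_{0},v}(x+a)) < \phi^{-1}_{x_{0},v}(\pi^{\perp}_{x_{0},v}(x+a))$, which by the good-direction property of $g$ holds iff $x+a \in \Omega$, i.e.\ iff $x \in \Omega - a$; and likewise with ``$<$'' replaced by ``$=$'' characterising $\partial(\Omega-a) \cap B(x_{0}-a,\delta)$. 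This gives the two required set equalities, so $v$ is a good direction for $\Omega - a$ at $x_{0}-a$ at scale $\delta$. There is no real obstacle here: the only thing to be careful about is bookkeeping the affine parameterisation $\phi_{x_{0},v}$ and its inverse under the shift, which is the step I would write out most explicitly.
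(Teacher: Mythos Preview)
Your proof is correct and follows essentially the same approach as the paper: define $\tilde g(y) := g(y+a)$, record the translation identities for $\pi_{x_0-a,v}$, $\pi^\perp_{x_0-a,v}$, and $\phi^{-1}_{x_0-a,v}$ via~\eqref{OrthProjDefns}, and plug them into the defining set equalities. If anything, your treatment of the $\phi^{-1}$ step is slightly more explicit than the paper's.
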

	
	\begin{proof} Take a continuous $g: \Pi(x_{0},v) \to \R$ which parameterises $\partial\Omega \cap B(x_{0},\delta)$ (see Definition~\ref{dfn:GoodDirections}). It follows from~\eqref{OrthProjDefns} that, for $x \in \R^d$,
	\begin{align}\label{OrthProjTranslate}
		\pi_{x_{0}-a,v}(x) = \pi_{x_{0},v}(x+a)-a \hspace{1em} \text{and} \hspace{1em} \pi^{\perp}_{x_{0}-a,v}(x) = \pi^{\perp}_{x_{0},v}(x+a)-a.
	\end{align}
	Define $\tilde{g}: \Pi(x_{0}-a,v) \to \R$ where $\tilde{g}(y)=g(y+a)$ for $y \in \Pi(x_{0}-a,v)$. It follows from~\eqref{OrthProjTranslate} that, for $x \in \R^d$,
	\begin{align}\label{Translate1}
		\tilde{g}(\pi_{x_{0}-a,v}(x)) = g(\pi_{x_{0},v}(x+a)).
	\end{align}
	On the other hand, for $x \in \R^{d}$, we have
    \begin{align*}
        \pi^{\perp}_{x_{0},v}(x+a) &= x_{0}+\phi^{-1}_{x_{0},v}(\pi^{\perp}_{x_{0},v}(x+a))v, \\
        \pi^{\perp}_{x_{0}-a,v}(x) &= x_{0}-a+\phi^{-1}_{x_{0}-a,v}(\pi^{\perp}_{x_{0}-a,v}(x))v,
    \end{align*}
    and so we can use~\eqref{OrthProjTranslate} to deduce that, for $x \in \R^{d}$,
	\begin{align}\label{Translate2}
		\phi^{-1}_{x_{0},v}(\pi^{\perp}_{x_{0},v}(x+a))=\phi^{-1}_{x_{0}-a,v}(\pi^{\perp}_{x_{0}-a,v}(x)).
	\end{align}
	Combining~\eqref{Translate1} and~\eqref{Translate2} and using the fact that $v$ is a good direction for $\Omega$ at $x_{0}$ at scale $\delta$ yields
	\begin{align*}
		x \in (\Omega - a) \cap B(x_{0}-a,\delta) \hspace{1em} &\iff \hspace{1em} g(\pi_{x_{0}-a,v}(x)) < \phi^{-1}_{x_{0}-a,v}(\pi^{\perp}_{x_{0}-a,v}(x)), \\
		x \in \partial(\Omega - a) \cap B(x_{0}-a,\delta) \hspace{1em} &\iff \hspace{1em} g(\pi_{x_{0}-a,v}(x)) = \phi^{-1}_{x_{0}-a,v}(\pi^{\perp}_{x_{0}-a,v}(x)).
	\end{align*}
	The result follows.
	\end{proof}
	
	The next result demonstrates that the continuity requirement for the ``parameterisation'' function of Definition~\ref{dfn:GoodDirections} is superfluous at points where the function is a parameterisation:
	
	\begin{lemma}\label{lm:ParamsAreCts} Let $\Omega \subset \R^d$ be open and let $\delta > 0$. Let $x_{0} \in \R^d$ with $\partial \Omega \cap B(x_{0},\delta) \neq \varnothing$. Take $v \in \mathbb{S}^{d-1}$ and let $g: \Pi(x_{0},v) \to \R$. Suppose that $g$ ``parameterises $\partial\Omega \cap B(x_{0},\delta)$'', that is,
	\begin{align*}
		\Omega \cap B(x_{0},\delta) &= \{x \in B(x_{0},\delta): g(\pi_{x_{0},v}(x)) < \phi^{-1}_{x_{0},v}(\pi^{\perp}_{x_{0},v}(x))\}, \\
		\partial \Omega \cap B(x_{0},\delta) &= \{x \in B(x_{0},\delta): g(\pi_{x_{0},v}(x)) = \phi^{-1}_{x_{0},v}(\pi^{\perp}_{x_{0},v}(x))\}.
	\end{align*}
	Furthermore, suppose that
	\begin{align}\label{ParamInBall}
		y+g(y)v \in \overline{B}(x_{0},\delta) \hspace{2em} \text{for $y \in \Pi(x_{0},v) \cap B(x_{0},\delta)$},
	\end{align}
	and suppose that $g(y) = 0$ for $y \in \Pi(x_{0},v)$ with $\vert y -x_0\vert \geq \delta$. Then $g$ is continuous.
	\end{lemma}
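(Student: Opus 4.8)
The plan is to deduce the continuity of $g$ from the openness of $\Omega$ and of its exterior $\R^{d}\setminus\overline{\Omega}$, by showing that $g$ is at once lower and upper semicontinuous. First I would normalise: by a translation — the computation in the proof of Lemma~\ref{lm:Translate}, which never uses continuity of the parameterising function — we may assume $x_{0}=0$, so that $\Pi(0,v)=v^{\perp}$, $\pi_{0,v}=\pi_{v}$, and $\phi_{0,v}^{-1}(\pi^{\perp}_{0,v}(x))=x\cdot v$. Writing $B:=B(0,\delta)$, the hypotheses then say
\[
\Omega\cap B=\{x\in B : x\cdot v>g(\pi_{v}x)\},\qquad \partial\Omega\cap B=\{x\in B : x\cdot v=g(\pi_{v}x)\}.
\]
Since $B$ is the disjoint union $(\Omega\cap B)\cup(\partial\Omega\cap B)\cup(B\setminus\overline{\Omega})$, and is also partitioned according to the sign of $x\cdot v-g(\pi_{v}x)$, it follows automatically that $(\R^{d}\setminus\overline{\Omega})\cap B=\{x\in B : x\cdot v<g(\pi_{v}x)\}$. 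The crucial observations are that $\Omega\cap B$ is open and that $(\R^{d}\setminus\overline{\Omega})\cap B$ is open. I would also record that, by~\eqref{ParamInBall}, $|y|^{2}+g(y)^{2}<\delta^{2}$, i.e.\ $|g(y)|<\sqrt{\delta^{2}-|y|^{2}}$, for every $y\in v^{\perp}$ with $|y|<\delta$.

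Next I would establish semicontinuity on $\{|y|<\delta\}$. Fix $y_{0}\in v^{\perp}$ with $|y_{0}|<\delta$. For lower semicontinuity, pick any real $c'$ with $c'<g(y_{0})$ and $|c'|<\sqrt{\delta^{2}-|y_{0}|^{2}}$; by the bound above such $c'$ may be chosen arbitrarily close to $g(y_{0})$. Then $p':=y_{0}+c'v\in B$ (as $|p'|^{2}=|y_{0}|^{2}+(c')^{2}<\delta^{2}$) and $p'\cdot v=c'<g(y_{0})=g(\pi_{v}p')$, so $p'\in(\R^{d}\setminus\overline{\Omega})\cap B$, an open set; choose $\rho>0$ with $B(p',\rho)\subset(\R^{d}\setminus\overline{\Omega})\cap B$. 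For $y\in v^{\perp}$ with $|y-y_{0}|<\rho$ the point $q:=y+c'v$ satisfies $|q-p'|=|y-y_{0}|<\rho$, hence $q\in(\R^{d}\setminus\overline{\Omega})\cap B$, hence $c'=q\cdot v<g(\pi_{v}q)=g(y)$. Letting $c'\uparrow g(y_{0})$ yields $\liminf_{y\to y_{0}}g(y)\geq g(y_{0})$. Upper semicontinuity is entirely symmetric, using instead reals $c'>g(y_{0})$ with $|c'|<\sqrt{\delta^{2}-|y_{0}|^{2}}$ together with the open set $\Omega\cap B$. This gives continuity of $g$ on $\{y\in v^{\perp} : |y|<\delta\}$.

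Finally I would treat the remaining points: on the open set $\{|y|>\delta\}$ we have $g\equiv 0$, so $g$ is continuous there; and if $|y_{0}|=\delta$ then $g(y_{0})=0$, while for $y$ near $y_{0}$ either $|y|\geq\delta$ and $g(y)=0$, or $|y|<\delta$ and $|g(y)|<\sqrt{\delta^{2}-|y|^{2}}\to 0$, so $g(y)\to g(y_{0})$. The one point needing care is the bookkeeping in the semicontinuity step: keeping the auxiliary height $c'$ on the correct side of $g(y_{0})$ while small enough that $y_{0}+c'v$ stays inside $B(0,\delta)$ — which is precisely what the estimate $|g(y)|<\sqrt{\delta^{2}-|y|^{2}}$ coming from~\eqref{ParamInBall} delivers. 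Everything else is an immediate consequence of the openness of $\Omega$ and of $\R^{d}\setminus\overline{\Omega}$.
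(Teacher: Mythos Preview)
Your proof is correct and takes essentially the same approach as the paper: reduce to $x_{0}=0$ via Lemma~\ref{lm:Translate}, use the openness of $\Omega\cap B$ and $(\R^{d}\setminus\overline{\Omega})\cap B$ to pin down $g$ near an interior point, and handle $|y|=\delta$ separately via the bound $|g(y)|^{2}<\delta^{2}-|y|^{2}$ coming from~\eqref{ParamInBall}. Your packaging as upper/lower semicontinuity is somewhat cleaner than the paper's sequential case analysis (and in fact sidesteps the paper's Case~3, which is vacuous under the open-ball hypothesis~\eqref{ParamInBall}), but the underlying mechanism is identical.
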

	
	\begin{proof} First note that it suffices to consider the case $x_{0} = 0$ by Lemma~\ref{lm:Translate}. Since $g$ is continuous on $\Pi(0,v) \cap \R^d\backslash B(0,\delta)$, it suffices to show that $g$ is continuous on $\Pi(0,v) \cap \overline{B}(0,\delta)$. Take $\{y_{j}\} \subset \Pi(0,v) \cap \overline{B}(0,\delta)$ with $y_{j} \to y$ for some $y \in \Pi(0,v) \cap \overline{B}(0,\delta)$.
		
	\textbf{Case 1.} Suppose that $\vert y \vert = \delta$. Note that $\vert y_{j} \vert^{2} + \vert g(y_{j}) \vert^{2} \leq \delta^2$ for $j \in \N$ by~\eqref{ParamInBall}. Since $\vert y_{j} \vert^{2} \to \delta^{2}$, we must have $g(y_{j}) \to 0 = g(y)$.
		
	\textbf{Case 2.} Suppose that $y + g(y)v \in B(0,\delta)$. Then there is $r > 0$ such that, for $\varepsilon \in (0,r)$,
	\begin{align*}
		y + (g(y) + \varepsilon)v &\in \Omega \cap B(0,\delta), \\
		y + (g(y) - \varepsilon)v &\in \R^d\backslash\overline{\Omega} \cap B(0,\delta).
	\end{align*}
	Fix $\varepsilon \in (0,r)$ and take $\eta > 0$ with 
	\begin{align*}
		B(y + (g(y) + \varepsilon)v,\eta) &\subset \Omega \cap B(0,\delta), \\
		B(y + (g(y) - \varepsilon)v,\eta) &\subset \R^d\backslash\overline{\Omega} \cap B(0,\delta).
	\end{align*}
	Since $y_{j} \to y$, there is $N \in \N$ with $\vert y_{j} - y \vert < \eta$ for $j > N$. It follows that, for $j > N$, 
	\begin{align*}
		y_{j} + (g(y)+\varepsilon)v &\in \Omega \cap B(0,\delta), \\
		y_{j} + (g(y)-\varepsilon)v &\in \R^d\backslash\overline{\Omega} \cap B(0,\delta).
	\end{align*}
	Since $g$ parameterises $\partial\Omega \cap B(0,\delta)$, we must have $g(y)-\varepsilon < g(y_{j}) < g(y)+\varepsilon$ for $j > N$.
		
	\textbf{Case 3.} Suppose that $y + g(y)v \in \partial B(0,\delta)$. If $g(y) = 0$, then $\vert y \vert = \delta$ and we can appeal to Case 1. We shall treat the case $g(y) > 0$; the case $g(y) < 0$ is analogous. In this case, we have $g(y) = (\delta^{2}-\vert y \vert^{2})^{1/2}$. Notice that there is $r > 0$ such that, for $\varepsilon \in (0,r)$,
	\begin{align*}
		y + (g(y) - \varepsilon)v \in \R^d\backslash\overline{\Omega} \cap B(0,\delta).
	\end{align*}
	Fix $\varepsilon \in (0,r)$ and take $\eta > 0$ with 
	\begin{align*}
		B(y + (g(y) - \varepsilon)v,\eta) &\subset \R^d\backslash\overline{\Omega} \cap B(0,\delta).
	\end{align*}
	Since $y_{j} \to y$, there is $N \in \N$ with $\vert y_{j} - y \vert < \eta$ for $j > N$. It follows that, for $j > N$, 
	\begin{align*}
		y_{j} + (g(y)-\varepsilon)v \in \R^d \backslash \overline{\Omega} \cap B(0,\delta).
	\end{align*}
	Since $g$ parameterises $\partial\Omega \cap B(0,\delta)$, we have $g(y_{j}) > g(y) - \varepsilon$. We also have $g(y_{j}) \leq (\delta^{2} - \vert y_{j} \vert^{2})^{1/2}$ since \eqref{ParamInBall} holds. Taking a lower and upper limit respectively yields
	\begin{align*}
		g(y) - \varepsilon \leq \liminf_{j \to \infty} g(y_{j}) \leq \limsup_{j \to \infty} g(y_{j}) \leq g(y).
	\end{align*}
	We deduce that $g(y_{j}) \to g(y)$ since $\varepsilon \in (0,r)$ was arbitrary.
	\end{proof}
	
	Let $\Omega \subset \R^d$ be open and let $\delta > 0$. Let $x_{0} \in \R^d$ with $\partial\Omega \cap B(x_{0},\delta) \neq \varnothing$ and suppose that $v \in \mathbb{S}^{d-1}$ is a good direction for $\Omega$ at $x_{0}$ at scale $\delta > 0$. A similar argument to that of Lemma \ref{lm:Translate} demonstrates that, for $x \in B(x_{0},\delta/2)$ with $\partial \Omega \cap B(x_{0},\delta/2) \neq \varnothing$, $v$ is a good direction for $\Omega$ at $x$ at scale $\delta/2$. It follows that we can construct fields of good directions locally. To prove Proposition~\ref{pp:InwardPointingVecFld}, we must piece these fields together, which we can do because of the following result:
	
	\begin{lemma} Let $\Omega \subset \R^d$ be open and let $\delta > 0$. Let $x_{0} \in \R^d$ with $\partial \Omega \cap B(x_{0},\delta) \neq \varnothing$ and let $v_{1}$, $v_{2} \in \mathbb{S}^{d-1}$ be good directions for $\Omega$ at $x_{0}$ at scale $\delta$. Then $v_{1} \neq -v_{2}$ and, for $\lambda \in (0,1)$, $w \in \R^d$, where
	\begin{align*}
		w = \frac{(1-\lambda)v_{1}+\lambda v_{2}}{\vert (1-\lambda)v_{1} + \lambda v_{2} \vert},
	\end{align*}
	is a good direction for $\Omega$ at $x_{0}$ at scale $\delta > 0$.
	\end{lemma}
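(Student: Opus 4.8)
The plan is to reduce to $x_{0}=0$ by Lemma~\ref{lm:Translate}, and then prove, in order, that $v_{1}\neq -v_{2}$ (so that $w$ is well defined) and that the unit vector $w$ of the statement is again a good direction for $\Omega$ at $0$ at scale $\delta$. Throughout I will use the trichotomy of $B(0,\delta)$ into $\Omega\cap B(0,\delta)$, $\partial\Omega\cap B(0,\delta)$ and $B(0,\delta)\setminus\overline{\Omega}$ read off from Definition~\ref{dfn:GoodDirections}: relative to a good direction $v$ these are the loci where $g_{v}(\pi_{0,v}(\cdot))$ is $<$, $=$, $>$ than $\phi^{-1}_{0,v}(\pi^{\perp}_{0,v}(\cdot))$. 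Since a move in the direction $-v$ (staying in $B(0,\delta)$) leaves $\pi_{0,v}$ fixed and strictly decreases $\phi^{-1}_{0,v}\circ\pi^{\perp}_{0,v}$, it is the exact ``backward'' counterpart of Lemma~\ref{lm:GoodDirectionsAreInward}: if $z\in B(0,\delta)\setminus\Omega$ and $t>0$ with $z-tv\in B(0,\delta)$, then $z-tv\in B(0,\delta)\setminus\overline{\Omega}$.

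To see $v_{1}\neq -v_{2}$, suppose $v_{2}=-v_{1}=:-v$. Pick $x\in\partial\Omega\cap B(0,\delta)$ (nonempty by assumption) and $t>0$ so small that $x+tv\in B(0,\delta)$. As $x\in\overline{\Omega}\cap B(0,\delta)$, Lemma~\ref{lm:GoodDirectionsAreInward} for the good direction $v$ gives $x+tv\in\Omega$; but $x\notin\Omega$, so the backward statement applied to the good direction $-v=v_{2}$ at $x$ gives $x+tv=x-tv_{2}\in B(0,\delta)\setminus\overline{\Omega}$, a contradiction. Hence $w$ is well defined for $\lambda\in(0,1)$; writing $w=\mu_{1}v_{1}+\mu_{2}v_{2}$, both $\mu_{1},\mu_{2}$ are $>0$.

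The crux is the following ``cone property'' for $w$: for every $x\in\overline{\Omega}\cap B(0,\delta)$ and every $\tau>0$ with $x+\tau w\in B(0,\delta)$, one has $x+\tau w\in\Omega$. Write $\tau w=\alpha_{1}v_{1}+\alpha_{2}v_{2}$ with $\alpha_{i}=\tau\mu_{i}>0$. The segment $[x,x+\tau w]$ is a compact subset of the open ball $B(0,\delta)$, hence at positive distance $\rho_{0}$ from its complement. Replace the straight move by a ``staircase'' with $N$ treads whose $2N$ vertices are $p_{2j}=x+\tfrac{j}{N}\tau w$ and $p_{2j+1}=p_{2j}+\tfrac{\alpha_{1}}{N}v_{1}$ (so $p_{2j+2}=p_{2j+1}+\tfrac{\alpha_{2}}{N}v_{2}$); every vertex lies within $\tfrac{\alpha_{1}+\alpha_{2}}{N}$ of the segment, so all vertices lie in $B(0,\delta)$ once $N>(\alpha_{1}+\alpha_{2})/\rho_{0}$. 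Starting from $p_{0}=x\in\overline{\Omega}\cap B(0,\delta)$ and applying Lemma~\ref{lm:GoodDirectionsAreInward} alternately with $v_{1}$ and with $v_{2}$ at each step keeps every vertex in $\overline{\Omega}\cap B(0,\delta)$ (and in fact in $\Omega$ from the first step on), so $x+\tau w=p_{2N}\in\Omega$. Getting this discrete path to respect the ball constraint is the step I expect to require the most care; the rest is bookkeeping.

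It remains to build the parameterising function. Split $\R^{d}=\Pi(0,w)\oplus\R w$, write points as $y+sw$, and for $\lvert y\rvert<\delta$ let $J_{y}=(a(y),b(y))$ with $b(y)=\sqrt{\delta^{2}-\lvert y\rvert^{2}}=-a(y)$ be the set of $s$ with $y+sw\in B(0,\delta)$. Put $h(y):=\inf\{s\in J_{y}:y+sw\in\overline{\Omega}\}$ when this set is nonempty, $h(y):=b(y)$ otherwise, and $h\equiv0$ on $\{\lvert y\rvert\ge\delta\}$. The cone property shows that $\{s\in J_{y}:y+sw\in\overline{\Omega}\}$ is an up-set in $J_{y}$, closed at its left endpoint when that endpoint is interior; unwinding this gives $\Omega\cap B(0,\delta)=\{y+sw:h(y)<s<b(y)\}$ and $\partial\Omega\cap B(0,\delta)=\{y+h(y)w:a(y)<h(y)<b(y)\}$, which are precisely the two set equalities of Definition~\ref{dfn:GoodDirections} for $w$ once one notes $\phi^{-1}_{0,w}(\pi^{\perp}_{0,w}(y+sw))=s$. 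Finally $h$ is continuous: openness of $\Omega$ forces $h$ to be upper semicontinuous (if $h(y_{0})<s_{0}<b(y_{0})$ then $y_{0}+s_{0}w\in\Omega$, so $h<s_{0}$ on a neighbourhood of $y_{0}$; infimizing over such $s_{0}$, and using $h\le b$ with $b$ continuous when $h(y_{0})=b(y_{0})$, gives $\limsup h\le h(y_{0})$), closedness of $\overline{\Omega}$ forces lower semicontinuity by the symmetric argument, and the squeeze $a(y)\le h(y)\le b(y)$ with $a,b\to0$ as $\lvert y\rvert\to\delta$ handles continuity across $\{\lvert y\rvert=\delta\}$. Thus $w$ is a good direction for $\Omega$ at $0$, hence at $x_{0}$ by Lemma~\ref{lm:Translate}. (One could hope to quote Lemma~\ref{lm:ParamsAreCts} for the continuity, but its hypothesis \eqref{ParamInBall} can fail for $h$ on fibres lying entirely outside $\overline{\Omega}$, so the semicontinuity argument above is the safe route.)
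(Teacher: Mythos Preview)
Your proof is correct and differs from the paper's in two places. For the cone property (that $x\in\overline{\Omega}\cap B(0,\delta)$ and $x+\tau w\in B(0,\delta)$ force $x+\tau w\in\Omega$), the paper first proves only a short-range version---its first Claim gives $x+sw\in\Omega$ merely for $0<s<\alpha\cdot\dist(x,\partial B(0,\delta))$, via a single $v_{1}$-then-$v_{2}$ step---and then bootstraps to the full fibre classification (its second Claim) by an infimum-and-contradiction argument; your staircase delivers the full-range cone property in one shot, which makes the up-set description of each $w$-fibre immediate and effectively collapses the paper's two Claims into a single step. For continuity of the parameterising function, the paper defines the same function as your $h$ (their $g$) and invokes Lemma~\ref{lm:ParamsAreCts}. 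Your worry about hypothesis~\eqref{ParamInBall} is literally correct under the open-ball reading (on fibres wholly outside $\overline{\Omega}$ one has $\lvert y+h(y)w\rvert=\delta$), but the proof of that lemma explicitly treats the boundary case (its Case~3), so the lemma is intended and valid with the closed ball and does apply here. Your direct semicontinuity argument is a clean alternative; just note that the lower-semicontinuity half tacitly relies on the up-set structure of the fibre set (hence on your cone property), not on closedness of $\overline{\Omega}$ alone---from $y+s_{0}w\notin\overline{\Omega}$ one only gets $h(y)\geq s_{0}$ after invoking that structure.
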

	
	\begin{proof} First note that it suffices to consider the case $x_{0} = 0$ by Lemma~\ref{lm:Translate}. For the first statement, suppose that $v_{1} = -v_{2}$. Let $x \in \partial\Omega \cap B(0,\delta)$ and take $t > 0$ with $x + tv_{1} \in B(0,\delta)$. Since $v_{1}$ is a good direction for $\Omega$ at $x_{0}$ at scale $\delta$, we can use Lemma \ref{lm:GoodDirectionsAreInward} to deduce that $x + tv_{1} \in \Omega$. On the other hand, since $-v_{1}$ is a good direction for $\Omega$ at $x_{0}$ at scale $\delta > 0$, a similar argument shows that $x + tv_{1} \in \R^{d}\backslash\overline{\Omega}$ -- a contradiction. For the second statement, fix $\lambda \in (0,1)$ and set $\alpha = \vert (1-\lambda)v_{1}+\lambda v_{2} \vert$. 
		\begin{claim} Let $x \in \overline{\Omega} \cap B(0,\delta)$ and take $s \in (0,\alpha\cdot\dist(x,\partial B(0,\delta)))$. Then $x+sw \in \Omega \cap B(0,\delta)$.
		\end{claim}
		\begin{proof}[Proof of Claim] First notice that
			\begin{align*}
				x + s\cdot\frac{(1-\lambda)}{\alpha}v_{1} \in B(0,\delta).
			\end{align*}
			Since $(1-\lambda)(s/\alpha) > 0$, we can use Lemma~\ref{lm:GoodDirectionsAreInward} to deduce that
			\begin{align*}
				x + s\cdot\frac{(1-\lambda)}{\alpha}v_{1} \in \Omega.
			\end{align*}
			Now, since $\alpha \leq 1$, we have 
			\begin{align*}
				x + s\cdot\frac{(1-\lambda)}{\alpha}v_{1} + s\cdot\frac{\lambda}{\alpha}v_{2} = x + sw \in B(0,\delta).
			\end{align*}
			We can therefore use Lemma~\ref{lm:GoodDirectionsAreInward} once more to deduce the result.
		\end{proof}
		We now define
		\begin{align*}
			G := \{y \in \Pi(0,w) \cap B(0,\delta): \partial\Omega \cap \Pi^{\perp}(y,w) \cap B(0,\delta) \neq \varnothing\}.
		\end{align*}
		\begin{claim} Take $y \in G$. Then there is $t(y) \in \R$ with 
			\begin{align*}
				\partial\Omega \cap \Pi^{\perp}(y,w) \cap B(0,\delta) &= \{y+t(y)w\}, \\ 
				\Omega \cap \Pi^{\perp}(y,w) \cap B(0,\delta) &= \{y+tw: t > t(y)\} \cap B(0,\delta), \\ 
				\R^d\backslash\overline{\Omega} \cap \Pi^{\perp}(y,w) \cap B(0,\delta) &= \{y+tw: t < t(y)\} \cap B(0,\delta).
			\end{align*}
		\end{claim}
		\begin{proof} It suffices to show that 
			\begin{align*}
				\{y+t(y)w\} &\subset \partial\Omega \cap \Pi^{\perp}(y,w) \cap B(0,\delta), \\ 
				\{y+tw: t > t(y)\} \cap B(0,\delta) &\subset \Omega \cap \Pi^{\perp}(y,w) \cap B(0,\delta), \\ 
				\{y+tw: t < t(y)\} \cap B(0,\delta) &\subset \R^d\backslash\overline{\Omega} \cap \Pi^{\perp}(x,w) \cap B(0,\delta). 
			\end{align*}
			Since $y \in G$, there is $t(y) \in \R$ with $y + t(y)w \in \partial\Omega \cap \Pi^{\perp}(y,w) \cap B(0,\delta)$. For the second inclusion, define $I \subset \R$ where
			\begin{align*}
				I := \{t > t(y): y + tw \in B(0,\delta) \cap \R^d\backslash\Omega\}.
			\end{align*}
			Suppose, for a contradiction, that $I \neq \varnothing$. Set $t_{*} = \inf I$. Notice that $y+(t(y)+t)w \in \Omega$ for small $t > 0$ by the previous claim, so we have $t_{*} > t(y)$. Fix $\varepsilon > 0$ with $\varepsilon < t_{*}-t(y)$ and $\varepsilon < \alpha \cdot \dist(y+tw,\partial B(0,\delta))$ for $t \in (t(y),t_{*})$. There is $t \in I$ with $t < t_{*} + \varepsilon$ and so $t - \varepsilon \in (t(y),t_{*})$. Since $B(0,\delta)$ is convex, we must have $y + (t-\varepsilon)w \in B(0,\delta)$ and so, since $t-\varepsilon \not\in I$, we must have $y + (t-\varepsilon)w \in \Omega$. We can therefore appeal to the previous claim to yield $y + tw \in \Omega \cap B(0,\delta)$, which contradicts $t \in I$. We deduce that $I = \varnothing$. The third inclusion is similar.
		\end{proof}
		Let $g: \Pi(0,w) \to \R$ where
		\begin{align*}
			g(y) &= \left\{\begin{array}{cl}
				0 & \text{if $\vert y \vert \geq \delta$} \\
				t(y) & \text{if $y \in G$} \\
				-(\delta^{2}-\vert y \vert^{2})^{1/2} & \text{if $\Pi^{\perp}(y,w) \cap B(0,\delta) \subset \Omega$} \\
				(\delta^{2}-\vert y \vert^{2})^{1/2} & \text{if $\Pi^{\perp}(y,w) \cap B(0,\delta) \subset \R^d\backslash\overline{\Omega}$}.
			\end{array}\right.
		\end{align*}
		Take $y \in \Pi(0,w) \cap B(0,\delta)$. As the intersection of convex sets, $\Pi^{\perp}(y,w) \cap B(0,\delta)$ is connected and so, if $y \not\in G$, we must have either $\Pi^{\perp}(y,w) \cap B(0,\delta) \subset \Omega$ or $\Pi^{\perp}(y,w) \cap B(0,\delta) \subset \R^{d}\backslash\overline{\Omega}$. It follows that $g$ is well-defined. Also note that
		\begin{align}\label{FuncInBall}
			y+g(y)w \in \overline{B}(0,\delta) \hspace{2em} \text{for $y \in \Pi(0,w) \cap B(0,\delta)$}.
		\end{align}
		It therefore suffices (by Lemma~\ref{lm:ParamsAreCts}) to show that $g$ parameterises $\partial\Omega \cap B(0,\delta)$. To this end, take $x \in B(0,\delta)$. If $\pi_{w}(x) \in G$, then we can use the second claim above to deduce that
		\begin{align*}
			x \in \Omega \hspace{1em} &\iff \hspace{1em} g(\pi_{w}(x)) < \phi^{-1}_{0,w}(\pi^{\perp}_{w}(x)), \\
			x \in \partial\Omega \hspace{1em} &\iff \hspace{1em} g(\pi_{w}(x)) = \phi^{-1}_{0,w}(\pi^{\perp}_{w}(x)).
		\end{align*}
		If $\Pi^{\perp}(\pi_{w}(x),w) \cap B(0,\delta) \subset \Omega$, then we have $x \in \Omega$; since $x \in B(0,\delta)$ we have 
		\begin{align}\label{PointInBall}
			\vert \pi_{w}(x) \vert^{2} + \vert \phi^{-1}_{0,w}(\pi_{w}^{\perp}(x)) \vert^{2} \leq \delta^{2},
		\end{align}
		and so
		\begin{align*}
			g(\pi_{w}(x)) = - (\delta^{2} - \vert \pi_{w}(x) \vert^{2})^{1/2} < \phi^{-1}_{0,w}(\pi_{w}^{\perp}(x)).
		\end{align*}
		Similarly, if $\Pi^{\perp}(\pi_{w}(x),w) \cap B(0,\delta) \subset \R^d\backslash\overline{\Omega}$, then $x \in \R^d\backslash \overline{\Omega}$ and we can use~\eqref{PointInBall} to deduce that 
		\begin{align*}
			g(\pi_{w}(x)) = (\delta^{2} - \vert \pi_{w}(x) \vert^{2})^{1/2} > \phi^{-1}_{0,w}(\pi_{w}^{\perp}(x)).
		\end{align*}
		The result follows.
	\end{proof}
	
	One can then use induction to establish a similar result for any finite set of good directions:
	
	\begin{corollary}\label{cr:GDsAreGeoConvex} Let $\Omega \subset \R^d$ be open and let $\delta > 0$. Let $x_{0} \in \R^d$ with $\partial\Omega \cap B(x_{0},\delta) \neq \varnothing$. Let $n \in \N$ and take a set $\{v_{i}\}_{i=1}^{n} \subset \mathbb{S}^{d-1}$ of good directions for $\Omega$ at $x_{0}$ at scale $\delta > 0$. Let $\{\lambda_{i}\}_{i=1}^{n} \subset [0,1]$ and suppose that $\lambda_{1} + \cdots + \lambda_{n} = 1$. Then $\lambda_{1}v_{1} + \cdots + \lambda_{n}v_{n} \neq 0$ and
		\begin{align*}
			w := \frac{\lambda_{1}v_{1} + \cdots + \lambda_{n}v_{n}}{\vert \lambda_{1}v_{1} + \cdots + \lambda_{n}v_{n}\vert}
		\end{align*}
		is a good direction for $\Omega$ at $x_{0}$ at scale $\delta > 0$.
	\end{corollary}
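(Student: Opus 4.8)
The plan is to proceed by induction on $n$, taking the preceding Lemma (the two-direction case) both as the base case and as the sole geometric ingredient of the inductive step. For $n=1$ there is nothing to prove, and for $n=2$ the statement is precisely that Lemma once one disposes of the trivial subcases $\lambda_{1}\in\{0,1\}$, where $w$ equals $v_{2}$ or $v_{1}$. I would then fix $n\ge 2$, assume the result for $n$, and consider good directions $v_{1},\dots,v_{n+1}$ for $\Omega$ at $x_{0}$ at scale $\delta$ with weights $\lambda_{1},\dots,\lambda_{n+1}\in[0,1]$ summing to $1$.

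First I would dispatch the degenerate cases: if $\lambda_{n+1}=0$, discard $v_{n+1}$ and apply the inductive hypothesis to $v_{1},\dots,v_{n}$; if $\lambda_{n+1}=1$, then $w=v_{n+1}$ and we are done. In the remaining case $\mu:=1-\lambda_{n+1}=\sum_{i=1}^{n}\lambda_{i}\in(0,1)$, so the numbers $\lambda_{i}/\mu$ ($1\le i\le n$) lie in $[0,1]$ and sum to $1$. The inductive hypothesis then gives that $\sum_{i=1}^{n}(\lambda_{i}/\mu)\,v_{i}\neq 0$ and that its normalisation $u\in\mathbb{S}^{d-1}$ is a good direction for $\Omega$ at $x_{0}$ at scale $\delta$. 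Setting $c:=\bigl|\textstyle\sum_{i=1}^{n}\lambda_{i}v_{i}\bigr|=\mu\bigl|\textstyle\sum_{i=1}^{n}(\lambda_{i}/\mu)v_{i}\bigr|$, we have $0<c\le\mu<1$ and $c\,u=\sum_{i=1}^{n}\lambda_{i}v_{i}$, whence
\[
\sum_{i=1}^{n+1}\lambda_{i}v_{i}\;=\;c\,u+\lambda_{n+1}v_{n+1}\;=\;(c+\lambda_{n+1})\bigl((1-\theta)\,u+\theta\,v_{n+1}\bigr),\qquad \theta:=\frac{\lambda_{n+1}}{c+\lambda_{n+1}}.
\]
Since $c>0$ and $\lambda_{n+1}>0$ we have $\theta\in(0,1)$, so the preceding Lemma applied to the good directions $u$ and $v_{n+1}$ shows that $(1-\theta)u+\theta v_{n+1}\neq 0$ and that its normalisation is a good direction. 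As $c+\lambda_{n+1}>0$, this normalisation coincides with that of $\sum_{i=1}^{n+1}\lambda_{i}v_{i}$, which is therefore nonzero with its normalisation a good direction; the induction is complete.

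The only real obstacle is bookkeeping rather than substance: one must peel off the cases $\lambda_{n+1}\in\{0,1\}$ so that $\mu$ and $\theta$ genuinely lie in the open interval $(0,1)$, and one must use the inductive hypothesis precisely to guarantee $c>0$, which is what makes $\theta<1$ and hence renders the preceding Lemma applicable. No estimate or new construction beyond the two-direction Lemma is needed.
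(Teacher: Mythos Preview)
Your proposal is correct and follows exactly the approach the paper indicates: the paper simply states ``One can then use induction to establish a similar result for any finite set of good directions'' without providing details, and your argument supplies precisely those details, reducing the $(n+1)$-direction case to the two-direction Lemma applied to the normalised partial sum $u$ and $v_{n+1}$. Your handling of the degenerate weights and the verification that $\theta\in(0,1)$ are exactly the bookkeeping one needs.
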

	
	We are now in a position to prove Proposition~\ref{pp:InwardPointingVecFld}.
	
	\begin{proof}[Proof of Proposition~\ref{pp:InwardPointingVecFld}] Fix $x \in \partial\Omega$. Since $\Omega$ has a $\Crm^{0}$ boundary, there is a good direction $v(x) \in \mathbb{S}^{d-1}$ to $\Omega$ at $x$ at some scale $\delta(x) > 0$. Since $\Omega$ is bounded, we can find $n \in \N$, $\{x_{i}\}_{i=1}^{n} \subset \partial\Omega$, and a $\delta > 0$ with 
		\begin{align*}
			(\partial \Omega)_{\delta} \subset \bigcup_{i=1}^{n}B\bigg(x_{i},\frac{\delta(x_{i})}{2}\bigg).
		\end{align*}
		As we have observed above, for $i \in \N$ with $1 \leq i \leq n$ and $x \in B(x_{i},\delta(x_{i})/2)$, $v_{i}:=v(x_i)$ is a good direction for $\Omega$ at $x$ at scale $\delta(x_{i})/2$. Let $\{\varphi_{i}\}_{i=1}^{n}$ be a smooth partition of unity for $(\partial\Omega)_{\delta}$ subordinate to the cover $\{B(x_{i},\delta(x_{i}/2)\}_{i=1}^{n}$. Let $\tilde{n}: (\partial\Omega)_{\delta} \to \mathbb{S}^{d-1}$ where, for $x \in (\partial\Omega)_{\delta}$,
		\begin{align*}
			\tilde{n}(x) = \frac{\varphi_{1}(x)v_{1} + \cdots + \varphi_{n}(x)v_{n}}{\vert \varphi_{1}(x)v_{1} + \cdots + \varphi_{n}(x)v_{n}\vert}.
		\end{align*}
		Note that we ensure that $\tilde{n}$ is Lipschitz by possibly making $\delta$ smaller. The result follows from Corollary~\ref{cr:GDsAreGeoConvex}.
	\end{proof}
	
		\subsection*{Data Availability Statement} No data were used to support this research.

	\subsection*{Conflict of Interest and Funding Statement}
	
	This project has received funding from the UKRI Frontier Research Guarantee (ERC guarantee) grant EP/Z000297/1 (ERC CONCENTRATE). H.T.\ was supported by the Warwick Mathematics Institute Centre for Doctoral Training and the EPSRC grant EP/W524645/1. The authors have no relevant financial or non-financial interests to disclose.

	\bibliographystyle{plain}
	
\end{document}